\documentclass[12pt,reqno]{amsart}
\textwidth=15cm \textheight=23cm
\oddsidemargin=0.5cm \evensidemargin=0.5cm
\topmargin=0cm



\usepackage{amsmath} 

\usepackage{amsthm}
\usepackage{amssymb}
\usepackage{graphics}
\usepackage{latexsym}

\numberwithin{equation}{section}
\newtheorem{thm}{Theorem}[section]
\newtheorem{prop}[thm]{Proposition}
\newtheorem{lem}[thm]{Lemma}
\newtheorem{cor}[thm]{Corollary}

\theoremstyle{remark}
\newtheorem{rem}{Remark}[section]
\newtheorem{defn}{Definition}

\newcommand{\BBB}{\mathbb}
\newcommand{\R}{{\BBB R}}
\newcommand{\Z}{{\BBB Z}}
\newcommand{\T}{{\BBB T}}

\newcommand{\N}{{\BBB N}}

\newcommand{\LR}[1]{{\langle {#1} \rangle }}

\newcommand{\lec}{{\ \lesssim \ }}
\newcommand{\gec}{{\ \gtrsim \ }}

\newcommand{\al}{\alpha}

\newcommand{\ga}{\gamma}
\newcommand{\Ga}{\Gamma}

\newcommand{\vp}{\varphi}

\newcommand{\e}{\varepsilon}

\newcommand{\x}{\xi}
\newcommand{\y}{\eta}

\newcommand{\p}{\partial}
\newcommand{\la}{\lambda}

\newcommand{\de}{\delta}
\newcommand{\om}{\omega}

\newcommand{\supp}{\operatorname{supp}}

\newcommand{\EQ}[1]{\begin{equation} \begin{split} #1
 \end{split} \end{equation}}
\newcommand{\EQS}[1]{\begin{align} #1 \end{align}}
\newcommand{\EQQS}[1]{\begin{align*} #1 \end{align*}}
\newcommand{\EQQ}[1]{\begin{equation*} \begin{split} #1
 \end{split} \end{equation*}}

\newcommand{\F}{\mathcal{F}}

\newcommand{\ti}{\widetilde}
\newcommand{\ha}{\widehat}


\title[L.W.P. of fifth order dispersive equations]{
Parabolic smoothing effect and\\
local well-posedness of fifth order semilinear\\
dispersive equations on the torus
}

\author[K. Tsugawa]{Kotaro Tsugawa}
\address[K. Tsugawa]{Graduate School of Mathematics, Nagoya University,
Chikusa-ku, Nagoya, 464-8602, Japan}
\email[K. Tsugawa]{tsugawa@math.nagoya-u.ac.jp}

\keywords{KdV, modified KdV, fifth order, p-Laplacian, well-posedness, Cauchy problem, energy method, normal form}
\subjclass[2010]{35Q53, 35G55, 37K10, 35A01, 35A02, 35B45, 35B65}
\begin{document}

\begin{abstract}
We consider the Cauchy problem of fifth order dispersive equations on the torus.
We assume that the initial data is sufficiently smooth and the nonlinear term is a polynomial depending on $\p_x^3 u, \p_x^2 u, \p_x u$ and $u$.
We prove that the local well-posedness holds on $[-T,T]$ when the nonlinear term satisfies a condition and  
otherwise, the local well-posedness holds with a smoothing effect only on either $[0,T]$ or $[-T,0]$ and nonexistence result holds on the other time interval, which means that
the nonlinear term can not be treated as a perturbation of the linear part and
the equation has a property of parabolic equations by an influence of the nonlinear term. As a corollary, we also have the same results for $(2j+1)$-st order dispersive equations.
\end{abstract}
\maketitle
\setcounter{page}{001}


\section{Introduction}
We consider the Cauchy problem of fifth order dispersive equations on $\T(:=\R/2\pi\Z)$:
\EQS{
&(\p_t +\ga_0\p_x^5+\ga_1\p_x^3+ \ga_2 \p_x) u(t,x)= N( \p_x^3 u, \p_x^2 u, \p_xu, u), \, (t,x)\in [-T,T] \times \T,\label{e1}\\
&u(0,\cdot)=\vp(\cdot),\label{e2}
}
where the initial data $\vp$, the unknown function $u$ are real valued and $\ga_0, \ga_1$ and $\ga_2$ are real constants with $\ga_0 \neq 0$.
We assume that the nonlinear term $N$ is a polynomial which depends only on $u, \p_x u, \p_x^2 u$ and $\p_x^3 u$ and does not include any constants and linear terms.
That is to say, $N$ can be expressed by
\EQ{
N( \p_x^3 u, \p_x^2 u, \p_xu, u)=\sum_{j=1}^{j_0} N_j(u), \quad
N_j(u)=\la_j(\p_x^3 u)^{a_j}(\p_x^2 u)^{b_j}(\p_x u)^{c_j}u^{d_j}
}
where $\la_j\in \R, j_0\in \N, a_j, b_j, c_j, d_j \in \N\cup \{0\}$ and $p_j:=a_j+b_j+c_j+d_j \ge 2$.
Put $p_{max}:=\max_{1\le j\le j_0} p_j$.
In this paper, we are interested in the case of initial data being sufficiently smooth.
Here, we define a functional $P_N(f)$ to categorize the nonlinear terms.
\begin{defn}
Put
\EQQ{
P_N(f):=\sum_{j=1}^{j_0}P_{N_j}(f), \quad P_{N_j}(f):= \frac{\la_jb_j}{2\pi}\int_\T (\p_x^3 f)^{a_j}(\p_x^2 f)^{b_j-1}(\p_x f)^{c_j}f^{d_j} \, dx.
}
We say that $N$ is non-parabolic resonance type if $P_N \equiv 0$, namely, $P_N(f)=0$ for any $f\in C^\infty(\T)$. Otherwise, we say $N$ is parabolic resonance type.
\end{defn}
Remark that
\EQQ{
P_N(f)=\frac{1}{2\pi}\int_\T \frac{\p}{\p\om_2} N(\om_3,\om_2,\om_1,\om_0)\Big|_{(\om_3,\om_2,\om_1,\om_0)=(\p_x^3 f, \p_x^2 f, \p_x f, f)}\, dx.
}
For instance, put $N_1:=2\p_x^2 u (\p_x u)^2$, $N_2:=(\p_x^2 u)^2 u$ and $N:=N_1+N_2$.
Then, $P_{N_1}(f)=\frac{1}{\pi} \int (\p_x f)^2 \, dx>0, P_{N_2}(f)=\frac{1}{\pi}\int f\p_x^2f \, dx=-P_{N_1}<0$ and $P_N = 0$ for any $f\in C^\infty(\T)$.
Therefore, $N_1$ and $N_2$ are parabolic resonance type and $N$ is non-parabolic resonance type.

Now, we state our main results.
\begin{thm}[L.W.P. for non-parabolic resonance type]\label{thm_nonparabolic}
Let $P_N \equiv 0$, $s\in \N$ and $s \ge 13$. Then, we have the followings.\\
(existence) Let $\vp\in H^s(\T)$. Then, there exist $T=T(\|\vp\|_{H^{12}})>0$ and a solution to \eqref{e1}--\eqref{e2} on $[-T,T]$ satisfying $u\in C([-T,T];H^s(\T))$.\\
(uniqueness) Let $T>0$, $u_1, u_2 \in L^\infty([-T,T];H^{12}(\T))$ be solutions to \eqref{e1}--\eqref{e2} on $[-T,T]$. Then, $u_1(t)=u_2(t)$ on $[-T,T]$.\\
(continuous dependence on initial data) Assume that $\{\vp^j\}_{j\in \N} \subset H^s(\T), \vp\in H^s(\T)$ satisfy $\|\vp^j-\vp\|_{H^s}\to 0$ as $j\to \infty$.
Let $u^j$ (resp. $u$) be the solution obtained above with initial data $\vp^j$ (resp. $\vp$) and $T=T(\|\vp\|_{H^{12}})$.
Then $\sup_{t\in [-T,T]}\|u^j(t)-u(t)\|_{H^s}\to 0$ as $j\to \infty$.
\end{thm}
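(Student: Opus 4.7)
The plan is to prove the theorem by a parabolic-regularization plus uniform energy estimate strategy, followed by compactness for existence, a direct energy argument for uniqueness, and a Bona--Smith limit for continuous dependence. I would set up the regularized problem
\EQQS{
\p_t u^\e + \ga_0\p_x^5 u^\e + \ga_1 \p_x^3 u^\e + \ga_2 \p_x u^\e + \e\p_x^6 u^\e = N(\p_x^3 u^\e,\p_x^2 u^\e,\p_x u^\e,u^\e), \quad u^\e(0)=\vp,
}
which, for each $\e > 0$, admits a local smooth solution by contraction in the mild formulation, thanks to the analytic semigroup generated by the $6$-th order dissipation. The task is then to establish a priori bounds uniform in $\e$ with lifespan depending only on $\|\vp\|_{H^{12}}$.

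The central step is the energy inequality $\tfrac{1}{2}\tfrac{d}{dt}\|u^\e\|_{H^s}^2 \le C(\|u^\e\|_{H^{12}})\|u^\e\|_{H^s}^2$. Expand $\p_x^s N_j$ by Leibniz. Every contribution in which the $s$-th derivative is shared among two or more factors is controlled by products of $L^2$- and $L^\infty$-quantities, and Gagliardo--Nirenberg together with the Sobolev embedding $H^{12}(\T)\hookrightarrow C^3(\T)$ bounds these by the stated right-hand side. The dangerous contributions are those in which $\p_x^s$ falls entirely on a single highest-order factor, producing either a $\p_x^{s+3} u^\e$ or a $\p_x^{s+2} u^\e$ paired against $\p_x^s u^\e$. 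Using the skew-adjointness of the linear dispersive operator and shifting derivatives by repeated integration by parts, I would reduce each such piece, modulo admissible remainders, to a single leading integrand proportional to the density appearing in $P_N(u^\e)$. The hypothesis $P_N \equiv 0$ then cancels this non-perturbative part, after the introduction of a normal-form correction of order $s-1$ in the energy functional designed to absorb the subleading quasi-resonant terms in which $\p_x^4 u^\e$ would otherwise appear uncontrolled (only derivatives up to order $3$ are available inside $C(\|u^\e\|_{H^{12}})$). Gronwall then yields the $\e$-uniform lifespan $T = T(\|\vp\|_{H^{12}})$.

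With the uniform $H^s$-bound in place, the equation gives equicontinuity in $C([-T,T];H^{s-5})$; Aubin--Lions compactness produces a limit $u \in L^\infty([-T,T];H^s)$ solving \eqref{e1}--\eqref{e2}, and strong continuity $u \in C([-T,T];H^s)$ is recovered from continuity of the modified energy. For uniqueness, the difference $v = u_1 - u_2$ of two $L^\infty_t H^{12}$ solutions satisfies a linear equation whose coefficients are polynomial in $u_1, u_2$ and their spatial derivatives up to order $3$ and hence lie in $L^\infty_{t,x}$; the same energy argument applied at a lower Sobolev level (where the $P_N \equiv 0$ cancellation continues to operate) produces $\tfrac{d}{dt}\|v\|^2 \le C\|v\|^2$, and Gronwall forces $v \equiv 0$. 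Continuous dependence follows by the classical Bona--Smith scheme: approximate $\vp^j$ and $\vp$ by frequency truncation, propagate the regularized data as smooth solutions, and combine the persistence-of-regularity bound with uniqueness at low regularity to conclude $\sup_{[-T,T]}\|u^j - u\|_{H^s} \to 0$.

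The principal obstacle is the energy estimate itself, specifically the algebraic identification of the non-perturbative top-order contribution as exactly (a multiple of) the density of $P_N$, and the construction of an appropriate normal-form correction that removes the subleading $\p_x^4 u$ terms so that only $\|u\|_{H^{12}}$ enters the constant. Without $P_N \equiv 0$, this leading piece would produce a weighted $(\p_x^{s+1} u)^2$-integral with no available cancellation, which cannot be absorbed into the energy and is precisely the mechanism responsible for the parabolic behaviour of the companion case; the present theorem therefore hinges critically on extracting this scalar cancellation in the form stated.
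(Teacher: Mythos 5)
Your proposal follows essentially the same route as the paper: parabolic regularization, a modified energy whose normal-form correction (built from the symbol $\Phi^{(p)}$ in the high-low, non-resonant region) removes the non-resonant part of the top-order contribution while the resonant part appears exactly as $P_N(u(t))\|\p_x^{s+1}u(t)\|_{L^2}^2$ and vanishes under $P_N\equiv 0$, followed by Gronwall and a Bona--Smith limit for persistence of regularity and continuous dependence. The only place your sketch is looser than the paper is uniqueness: the difference equation is not effectively ``linear with $L^\infty_{t,x}$ coefficients,'' since it still carries $\p_x^3(u_1-u_2)$ and loses derivatives, so one must rerun the full normal-form-corrected difference estimate at level $H^8$ using that both solutions lie in $H^{12}$ (the four extra derivatives on one argument are needed to control the terms generated by differentiating the correction term in time), which is precisely what the paper does.
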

\begin{thm}[L.W.P. for parabolic resonance type]\label{thm_parabolic1}
Let $P_N\not\equiv 0$, $s\in \N$ and $s \ge 13$. Then, we have the followings.\\
(existence) Let $\vp\in H^s(\T)$ and $P_N(\vp)>0$ (resp. $P_N(\vp)<0$). Then, there exist $T=T(P_N(\vp),\|\vp\|_{H^{12}})>0$ and a solution to \eqref{e1}--\eqref{e2} on $[0,T]$ (resp. $[-T,0]$) satisfying $u\in C([0,T];H^s(\T)) \cap C^\infty((0,T]\times \T)$ and $P_N(u(t)) \ge P_N(\vp)/2$ on $[0,T]$ (resp. $u\in C([-T,0];H^s(\T)) \cap C^\infty([-T,0)\times \T)$ and $P_N(u(t)) \le  P_N(\vp)/2$ on $[-T,0]$).\\
(uniqueness) Let $T>0$, $u_1, u_2 \in L^\infty([0,T];H^{12}(\T))$ (resp.  $u_1, u_2 \in L^\infty([-T,0];H^{12}(\T))$) be solutions to \eqref{e1}--\eqref{e2} and $P_N(u_1(t)) \ge 0$ on $[0,T]$ (resp. $P_N(u_1(t)) \le 0$ on $[-T,0]$). Then, $u_1(t)=u_2(t)$ on $[0,T]$ (resp. $[-T,0]$).\\
(continuous dependence on initial data) Assume that $\{\vp^j\}_{j\in \N} \subset H^s(\T), \vp\in H^s(\T)$ satisfy $P_N(\vp)>0$ (resp. $P_N(\vp)<0$)) and $\|\vp^j-\vp\|_{H^s}\to 0$ as $j\to \infty$.
Let $u^j$ (resp. $u$) be the solution obtained above with initial data $\vp^j$ (resp. $\vp$) and $T=T(P_N(\vp), \|\vp\|_{H^{12}})$.
Then $\sup_{t\in [0,T]}\|u^j(t)-u(t)\|_{H^s(\T)}\to 0$ (resp. $\sup_{t\in[-T,0]}\|u^j(t)-u(t)\|_{H^s(\T)}\to 0$) as $j\to \infty$.
\end{thm}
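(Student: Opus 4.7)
My plan is to derive a modified energy inequality exhibiting a parabolic smoothing term of definite sign and then close the well-posedness theory via a parabolic regularization scheme, in the spirit of quasilinear parabolic PDE. The change of variables $(t,x)\mapsto(-t,-x)$ transforms \eqref{e1} into an equation of the same form with $\ga_1,\ga_2$ replaced by $-\ga_1,-\ga_2$ and each $\la_j$ replaced by $(-1)^{a_j+c_j+1}\la_j$; a short computation shows the associated functional is $-P_N$, so the case $P_N(\vp)<0$ reduces to the case $P_N(\vp)>0$ and I focus on the latter.

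\textbf{Parabolic mechanism in the energy estimate.} Compute $\tfrac{d}{dt}\|\p_x^s u\|_{L^2}^2 = 2\int_\T \p_x^s u\cdot\p_x^s N\,dx$; the linear part vanishes by antisymmetry. Expanding $\p_x^s N_j$ by Leibniz, the contribution in which the outermost $\p_x^s$ lands on a single factor of $\p_x^2 u$ is
\EQQS{
\la_j b_j\int_\T \p_x^s u\cdot(\p_x^{s+2}u)\,\Phi_j(u)\,dx,\qquad \Phi_j(u):=(\p_x^3u)^{a_j}(\p_x^2u)^{b_j-1}(\p_x u)^{c_j}u^{d_j}.
}
Writing $\p_x^{s+2}u=\p_x(\p_x^{s+1}u)$ and integrating by parts once gives $-\la_j b_j\int_\T(\p_x^{s+1}u)^2\Phi_j(u)\,dx$ modulo terms involving at most $s$ derivatives on $u$. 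Replacing $\Phi_j(u)$ by its spatial mean $\overline{\Phi_j(u)}=P_{N_j}(u)/(\la_j b_j)$ and summing over $j$ produces the parabolic term $-P_N(u)\int_\T(\p_x^{s+1}u)^2\,dx$. The mean-zero part of $\Phi_j(u)$, together with the remaining branches of Leibniz in which $\p_x^s$ lands on factors of $\p_x^3u$, $\p_x u$ or $u$, yields either non-resonant quadratic interactions that are absorbed by a finite-step normal form transformation modifying $\|u\|_{H^s}^2$ into an equivalent energy $E_s(u)$, or cubic-and-higher remainders controlled by $\|u\|_{H^{12}}$. The outcome is
\EQQS{
\tfrac{d}{dt}E_s(u)+c\, P_N(u)\,\|\p_x^{s+1}u\|_{L^2}^2 \le C(\|u\|_{H^{12}})\bigl(1+E_s(u)\bigr)^{p_{max}}.
}

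\textbf{Existence, regularity, uniqueness, and continuous dependence.} Regularize \eqref{e1} by appending a dissipative high-order term $\eps\p_x^{2k}u$ with $k$ even and large enough to dominate the nonlinearity's derivative count; the resulting semilinear parabolic equation has a local smooth solution $u^\eps$. A continuity argument ensures $P_N(u^\eps(t))\ge P_N(\vp)/2$ on $[0,T]$ with $T=T(P_N(\vp),\|\vp\|_{H^{12}})$, after which the modified energy inequality yields uniform bounds $u^\eps\in L^\infty([0,T];H^s)\cap L^2([0,T];H^{s+1})$. Compactness produces a limit $u\in C([0,T];H^s)$; iterating the energy estimate at regularities $s'>s$ and using the $L^2_t H^{s+1}_x$ parabolic gain propagates smoothness strictly forward in time, giving $u\in C^\infty((0,T]\times\T)$. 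Uniqueness is obtained by applying the same energy machinery to $w=u_1-u_2$: the hypothesis $P_N(u_1(t))\ge 0$ makes the parabolic term in the $w$-inequality of good sign, absorbs the top-derivative losses produced by the difference of nonlinearities, and closes Gronwall at a low Sobolev level. Continuous dependence follows from a Bona--Smith argument: regularize $\vp^j$ and $\vp$ to higher Sobolev regularity, exploit uniqueness and uniform-on-data a priori bounds on smoothed data, and lift to convergence in $H^s$.

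\textbf{Main obstacle.} The delicate step is the energy computation of step two: one must track every Leibniz branch of $\p_x^s N_j$, including commutators that distribute the $s$ derivatives among several $\p_x^3u$ factors, and show that every contribution not identified with the parabolic term either cancels by integration by parts against the antisymmetric linear operator, is eliminated by the normal form, or is bounded by a remainder of the form $C(\|u\|_{H^{12}})E_s(u)^{p_{max}/2}$. The thresholds $s\ge 13$ and $L^\infty_tH^{12}_x$ are precisely the slack needed for the normal form correction to converge (a small divisor bound against the symbol of $\p_t+\ga_0\p_x^5$) and for the cubic-and-higher remainders to be controlled one derivative below the parabolic gain.
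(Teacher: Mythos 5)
Your overall strategy coincides with the paper's: reduce to forward time, build a modified energy whose correction term is a normal-form (small-divisor) correction, identify the resonant part of the $\p_x^2u$-branch as $-P_N(u)\|\p_x^{s+1}u\|_{L^2}^2$, regularize with a dissipative term, run a continuity argument to keep $P_N(u_\e(t))\ge P_N(\vp)/2$, use Bona--Smith for convergence in $H^s$ and continuous dependence, and bootstrap the $L^2_tH^{s+1}_x$ gain to get $C^\infty$ smoothing. All of that matches Sections 2--4 of the paper, and your accounting of the Leibniz branches (resonant part of the $\p_x^3u$-branch killed by antisymmetry, resonant part of the $\p_x^2u$-branch giving $P_N$, the rest non-resonant) is the correct bookkeeping.

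There is, however, one genuine gap: you treat the normal form as something that simply ``absorbs'' the non-resonant interactions, but the correction term you add to the energy must itself be differentiated in time, and substituting the equation into the slot carrying $s+1$ derivatives produces terms of the form $\Ga^{(p_i+p_j-1)}\big(\cdot\,;u,\ldots,u,\p_x^{s+1}N_i(u),u\big)$ in which one factor carries $s+4$ derivatives. The denominator $\Phi^{(p_j)}$ only gains $|k_{p_j}|^4|k_{(1,p_j-1)}|$ (Lemma \ref{oscillation_est}), so after cancellation one full derivative of loss survives; this is not removable by iterating the normal form, because for $p\ge 4$ the resonance set of $\Phi^{(p)}$ has no usable factorization (Remark \ref{oscillation_rem}). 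The paper resolves this (the term $K_{i,1}$ in Lemma \ref{Ki}) by symmetrizing the multiplier in the last two frequencies, applying the algebraic identity of Lemma \ref{byparts_lem} to trade $(ik_{p_i+p_j-1})^3$ for low-frequency factors, and controlling the non-symmetric remainder via the cancellation estimate for $(1-S)[1/\Phi^{(p_j)}]$ in Lemma \ref{oscillation_est2}. Without this step (or a substitute for it) the energy inequality you assert does not close, and since everything else in your argument hangs on that inequality, this is the point you would need to supply. A secondary, smaller caveat: weak compactness of $\{u^\e\}$ alone gives only $L^\infty_tH^s_x$ with weak continuity; strong continuity in $H^s$ at $t=0$ at the endpoint regularity requires running the Bona--Smith approximation already in the existence step (as the paper does with the difference estimate of Proposition \ref{thm_energy_est}), not only in the continuous-dependence step.
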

\begin{thm}[non existence for parabolic resonance type]\label{thm_parabolic2}
Let $\vp \not\in C^\infty(\T)$ and $P_N(\vp)<0$ (resp. $P_N(\vp)>0$). Then, for any small $T>0$, there does not exist any solution to \eqref{e1}--\eqref{e2} on $[0,T]$ (resp. $[-T,0]$) satisfying $u\in C([0,T];H^{13}(\T))$ (resp. $u\in C([-T,0];H^{13}(\T))$). 
\end{thm}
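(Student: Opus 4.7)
The plan is to reach a contradiction by showing that any hypothetical solution would force $\vp \in C^\infty(\T)$, contrary to assumption. We treat the case $P_N(\vp) < 0$ with $u$ defined on $[0,T]$; the case $P_N(\vp) > 0$ on $[-T,0]$ is identical after one further time reversal. Suppose $u \in C([0,T]; H^{13}(\T))$ solves \eqref{e1}--\eqref{e2} and set $v(t,x) := u(-t, x)$ for $t \in [-T, 0]$. A direct computation shows that $v$ solves an equation of the same polynomial form as \eqref{e1} with coefficients $(-\ga_0, -\ga_1, -\ga_2)$ and nonlinear term $\ti N := -N$; crucially $P_{\ti N}(f) = -P_N(f)$ for every $f$, so $P_{\ti N}(v(0)) = -P_N(\vp) > 0$. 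Continuity of $P_{\ti N}$ on $H^{13}$ and of $v$ in time then yield $T^* \in (0, T]$ and $\rho > 0$ such that $P_{\ti N}(v(t)) \ge \rho$ for all $t \in [-T^*, 0]$.

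Apply Theorem \ref{thm_parabolic1} to the $v$-equation at initial time $-T^*$ with data $v(-T^*) \in H^{13}$. The existence clause produces a solution $\ti v$ on $[-T^*, -T^* + T_1]$ with $\ti v \in C([-T^*, -T^* + T_1]; H^{13}) \cap C^\infty((-T^*, -T^* + T_1] \times \T)$ for some $T_1 > 0$. Both $v$ and $\ti v$ lie in $L^\infty([-T^*, -T^* + T_1]; H^{12})$ and $P_{\ti N}(v(\cdot)) \ge 0$ there, so the uniqueness clause of Theorem \ref{thm_parabolic1} identifies $v \equiv \ti v$ on their common domain. Hence $v(t) \in C^\infty(\T)$ for every $t \in (-T^*, -T^* + T_1]$. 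To carry smoothness all the way to $t = 0$, I iterate: from any smooth value $v(t_0)$ with $t_0 \in [-T^*, 0)$, Theorem \ref{thm_parabolic1} furnishes a smooth extension on $[t_0, t_0 + \tau(t_0)]$ which agrees with $v$ by uniqueness, where $\tau(t_0)$ depends only on $P_{\ti N}(v(t_0))$ and $\|v(t_0)\|_{H^{12}}$. Since $P_{\ti N}(v(t)) \ge \rho$ and $\|v(t)\|_{H^{12}} \le M := \|v\|_{C([-T, 0]; H^{12})}$ throughout $[-T^*, 0]$, the step sizes are uniformly bounded below by some $\tau_0 = \tau_0(\rho, M) > 0$. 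After at most $\lceil T^*/\tau_0 \rceil$ iterations, smoothness reaches $t = 0$, yielding $\vp = v(0) \in C^\infty(\T)$: the desired contradiction.

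The main obstacle is establishing the uniform lower bound $\tau_0 > 0$ on the local existence time along the iteration. This reduces to Theorem \ref{thm_parabolic1}'s existence time being a concrete lower semicontinuous function of $(P_{\ti N}(\text{data}), \|\text{data}\|_{H^{12}})$ on the set $\{P \ge \rho,\ \|\cdot\|_{H^{12}} \le M\}$, a standard by-product of any energy-method proof of existence, combined with the uniform bounds $P_{\ti N}(v(\cdot)) \ge \rho$ and $\|v(\cdot)\|_{H^{12}} \le M$ on $[-T^*, 0]$ already established. The conceptual content is thus concentrated entirely in Theorem \ref{thm_parabolic1}: forward parabolic smoothing in the ``good'' direction is incompatible with a rough datum evolving backward into a bounded $H^{13}$ trajectory.
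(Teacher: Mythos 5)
Your argument is correct and is essentially the paper's: both reduce the claim to applying the existence-with-smoothing and uniqueness clauses of Theorem \ref{thm_parabolic1} in the time direction favored by the sign of $P_N$, thereby forcing $\vp$ to be smooth and contradicting the hypothesis. The only cosmetic differences are that the paper invokes the backward (``resp.'') clause of Theorem \ref{thm_parabolic1} directly instead of performing an explicit time reversal, and chooses the starting offset $\de$ so small that a single application reaches $t=0$, avoiding your iteration; both versions rely equally on the local existence time being bounded below in terms of $P_N$ of the data and its $H^{12}$ norm, which is a by-product of the proof of Theorem \ref{thm_parabolic1} rather than of its literal statement.
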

\begin{rem}
Theorem \ref{thm_nonparabolic} is a typical result for dispersive equations in the following sense: they can be solved on both positive and negative time intervals and the regularity of the solution is same as that of initial data.
Theorems \ref{thm_parabolic1} and \ref{thm_parabolic2} are typical results for parabolic equations in the following sense: they can be solved on either positive or negative time interval with strong smoothing effect and they are ill-posed on the other time interval.
Since \eqref{e1} are semilinear dispersive equations, Theorem \ref{thm_nonparabolic} is a natural result. On the other hand, Theorems \ref{thm_parabolic1} and \ref{thm_parabolic2} are somewhat surprising.
These theorems mean that when the nonlinear term is parabolic resonance type, the nonlinear term cannot be treated as a perturbation of the linear part and the effect by the second derivative in the nonlinear part is dominant.
\end{rem}
\begin{rem}
Put $J_{N,\e}^{(j)}=\frac{d^j}{dt^j}P_N(u_\e(t))\big|_{t=0}$ for $\e\in [0,1], j=1,2,\ldots$ and $u_\e$ satisfying \eqref{es1}--\eqref{es2} with $\vp\in H^{3+5j}(\T)$. Note that we can express $J_{N,\e}^{(j)}$ only by $\vp$ if we use \eqref{es1}--\eqref{es2}.
For instance,
\EQQ{
J_{N,\e}^{(1)}=\sum_{j=1}^{j_0}\frac{\la_jb_j}{2\pi}\int_\T &\Big(a_j(\p_x^3 \vp)^{a_j-1}(\p_x^2 \vp)^{b_j-1}(\p_x \vp)^{c_j}\vp^{d_j}\p_x^3\\
&+(b_j-1)\p_x^3 \vp)^{a_j}(\p_x^2 \vp)^{b_j-2}(\p_x \vp)^{c_j}\vp^{d_j}\p_x^2\\
&+c_j(\p_x^3 \vp)^{a_j}(\p_x^2 \vp)^{b_j}(\p_x \vp)^{c_j-1}\vp^{d_j}\p_x\\
&+d_j(\p_x^3 \vp)^{a_j}(\p_x^2 \vp)^{b_j}(\p_x \vp)^{c_j}\vp^{d_j-1}\Big)\\
&\times\big(( -\e\p_x^4-\ga_0\p_x^5-\ga_1\p_x^3- \ga_2 \p_x) \vp+ N(\vp, \p_x\vp, \p_x^2 \vp, \p_x^3 \vp)\big)\, dx.
}
Obviously, $J_{N,\e}^{(j)}\to J_{N,0}^{(j)}$ when $\e\to 0$.
When $N$ is parabolic resonance type and $P_N(\vp)=J_{N,0}^{(1)}=\cdots=J_{N,0}^{(j-1)}=0$, the direction of the existence time depends on the sign of $J_{N,0}^{(j)}$.
Precisely, if $s \ge 13+5j, \vp \in H^s(\T), P_N(\vp)=J_{N,0}^{(1)}=\cdots=J_{N,0}^{(j-1)}=0$ and $J_{N,0}^{(j)}>0$ (resp. $J_{N,0}^{(j)}<0$), then there exists a unique solution $u\in C([0,T];H^s(\T)) \cap C^\infty((0,T)\times \T)$ (resp. $C([-T,0];H^s(\T))\cap C^\infty((-T,0)\times \T)$) and there does not exist any solution $u\in C([-T,0];H^{13+5j}(\T))$ (resp. $C([0,T];H^{13+5j}(\T))$).
We can prove this by using Taylor's expansion instead of the mean value theorem in the proof of Lemma \ref{pre_apriori2} and replacing \eqref{Pest} with
\EQQ{
\inf_{t\in[0,T_+]} \frac{P_N(u_\e(t))}{t^j} \ge  \frac{1}{2}\frac{d^j}{dt^j} P_N(u_\e(t))\Big|_{t=0},
}
the second term of the left-hand side of \eqref{apriori1} with
\EQQ{
J_{N,\e}^{(j)}\int_0^t (t')^j\|\p_x^{s+1}u_\e(t')\|_{L^2}^2\, dt'
}
and \eqref{apriori2} with
\EQQ{
\inf_{t\in[0,T_*]} \frac{P_N(u_\e(t))}{t^j} \ge  \frac{1}{2}J_{N,\e}^{(j)}.
}
\end{rem}

Here, we give some examples which satisfies \eqref{e1}.
The fifth order KdV equation:
\EQ{\label{ex1}
(\p_t +\p_x^5)u -5\p_x(\p_x u)^2 +10 \p_x^2 (u\p_x u)+\p_x(u^3)=0,
}
is the second equation in the KdV hierarchy.
The fifth order modified KdV equation:
\EQ{\label{ex2}
(\p_t +\p_x^5)u +5\p_x(u\p_x^2(u^2)) \pm 6\p_x (u^5)=0
}
is the second equation in the modified KdV hierarchy.
In \cite{Benney}, Benney introduced the following equation to describe interactions between short and long waves:
\EQ{\label{ex3}
(\p_t +\p_x^5) u=u\p_x^3 u+2\p_x u\p_x^2 u.
}
In \cite{Lisher}, Lisher proposed the following equation in the study of anharmonic lattices:
\EQ{\label{ex4}
(\p_t +\p_x^5 +\p_x^3) u
=-\frac{1}{2}u\p_x^3 u-u^2\p_x^3 u-(1+4u)\p_x u\p_x^2 u-(\p_x u)^3-(u+u^2)\p_x u.
}
The nonlinear terms of \eqref{ex1}--\eqref{ex4} are non-parabolic resonance type.
Let $p$ is an odd number greater than $4$. 
\EQ{\label{plap}
(\p_t+\ga_0 \p_x^5)u=\p_x(|\p_x u|^{p-2}\p_x u)
}
is called the parabolic p-Laplacian equation when $\ga_0=0$.
Let $q$ is a natural number greater than $2$.
\EQ{\label{porous}
(\p_t+\ga_0\p_x^5)u=\p_x^2 (u^q)
}
is called the porous medium equation when $\ga_0=0$.
These are degenerated parabolic equations. The second derivative with $x$ of \eqref{plap} (resp. \eqref{porous}) vanished at the point such that $\p_xu(t,x) =0$ (resp. $u(t,x)=0$).
Therefore, it does not have the parabolic smoothing effect when the initial data $\vp$ satisfy $\p_x\vp(x)=0$ (resp. $\vp(x)=0$) at some points $x\in \T$.
However, when $\ga_0 \neq 0$, \eqref{plap} and \eqref{porous} are the fifth order dispersive equations with the parabolic resonance type nonlinearity.
Therefore, they have the parabolic smoothing effect by Theorem \ref{thm_parabolic1}. This means that the dispersion recovers the smoothing effect of the degenerated parabolic equations.

For the case of $x\in \R$, there are many results related to fifth order dispersive equations (\cite{Gru}, \cite{Zihua}, \cite{Kwon1}, \cite{Kwon2}, \cite{TKK}, \cite{KPilod}, \cite{KPV}, \cite{Ponce}, \cite{Tomoeda}).
In \cite{KPV}, Kenig, Ponce and Vega consider the following $(2j+1)$-st order dispersive equations:
\EQQ{
(\p_t+\p_x^{2j+1})u=N(\p_x^{2j}u,\ldots,\p_x u,u).
}
Employing the gauge transformation introduced by Hayashi \cite{Hayashi}, Hayashi and Ozawa \cite{HayashiOzawa1}, \cite{HayashiOzawa2} and the smoothing effect for the linear part:
\EQQ{
\|\p_x^j e^{t\p_x^{2j+1}}\vp\|_{L^\infty_xL^2_t}\lec \|\vp\|_{L^2},
}
they proved the local well-posedness in $H^{s_1}(\R)\cap H^{s_2}(\R;x^2\,dx)$ for sufficiently large integers $s_1, s_2$.
The result means that the local solution is controlled by the linear part of the equation in the case $x\in \R$ unlike the parabolic resonance type in the case $x\in \T$. 
In \cite{Kwon2}, Kwon proved the local well-posedness of
\EQ{\label{kwonfifth}
(\p_t+\p_x^5)u=c_1\p_x u\p_x^2 u+c_2 u\p_x^3 u
}
in $H^s(\R)$ for $s>5/2$.
The standard energy method gives only the following:
\EQ{\label{energyineq0}
\frac{d}{dt}\|\p_x^su(t)\|_{L^2}^2 \lec \|\p_x^3u\|_{L^\infty}\|\p_x^s u(t)\|_{L^2}+\Big|\int_{\R} \p_x u\p_x^{s+1}u\p_x^{s+1}u\, dx\Big|.
}
It is the main difficulty in this problem that the last term can not be estimated by $\|u(t)\|_{H^s}$. To overcome the difficulty, Kwon introduced the following energy:
\EQ{\label{menergy0}
E_{*}(u(t)):=\|D^su(t)\|_{L^2}^2+\|u(t)\|_{L^2}^2+C_s\int_\R u(t)D^{s-2}\p_xu(t)D^{s-2}\p_x u(t),
}
where $D:=\F^{-1} |\x| \F_x$.
The last term is the correction term and used to cancel out the last term in \eqref{energyineq0}.

For the case of $x\in \T$, the linear part does not have any smoothing effect and only a few results are known.
In \cite{Sa}, Saut proved the existence of global weak solutions to nonlinear $(2j+1)$-st order dispersive equations which have Hamiltonian structure.
In \cite{Schwarz}, Schwarz Jr.~ proved the unique existence of the global solution to the higher order KdV equations of the member of the KdV hierarchy.
In \cite{Kwak}, Kwak proved the global well-posedness of the fifth order KdV equation \eqref{ex1} in $H^s(\T)$ for $s\ge 2$.
All these results require some special structure to the nonlinear terms and parabolic resonance type nonlinearities are excluded.

Our proof based on the method by Kwon in \cite{Kwon2} mentioned above (see also Segata \cite{Segata} and Kenig-Pilod \cite{KPilod2}).
Since the nonlinear term depends on $\p_x^2 u$ and $\p_x^3 u$,
the standard energy method does not work.
Therefore, we employ the following energy:
\EQ{\label{menergy}
E_{s}(u):=& \frac{1}{2}\|\p_x^s u\|_{L^2}^2+\frac{1}{2}\|u\|_{L^2}^2\Big(1+C_s\sum_{j=1}^{j_0} \|u\|_{H^4}^{s(p_j-1)}\Big)\\
+& \sum_{j=1}^{j_0}{\Ga}^{(p_j)}\Big( \frac{(ik_{p_j})^{s+1}(ik_{p_j+1})^{s+1}M_{NR,j}}{\Phi^{(p_j)}}; u,\ldots,u \Big).
}
See section 2 for the definitions of $\Ga^{(p_j)}, M_{NR,j}$ and $\Phi^{(p_j)}$.
We will show the following energy inequality for sufficiently smooth solutions (see Corollary \ref{cor_energy_est}):
\EQ{\label{energyineq}
\frac{d}{dt} E_s(u(t))+P_N(u(t))\| \p_x^{s+1}u(t) \|_{L^2}^2 \lec E_s(u(t))(1+E_{8}(u(t)))^{r(s)}.
}
Note that the second term of the left hand side has the parabolic smoothing effect when $P_N(u(t)) \neq 0$.
The difference between our proof and the proof by Kwon is how to define the correction term of the energy and the presence of the term having the parabolic effect in the energy inequality.
Remark that the nonlinear term of \eqref{kwonfifth} is non-parabolic resonance type and quadratic.
As mentioned in Remark \ref{oscillation_rem}, it seems difficult to find the resonance part exactly when $p_j \ge 4$.
Therefore, we use Lemma \ref{oscillation_est} and the normal form reduction to recover the derivative losses. Then, naturally we obtain the correction term of \eqref{menergy} and the second term in the left-hand side of \eqref{energyineq} (see Proposition \ref{multi_est2}).
The normal form reduction has been applied for many nonlinear dispersive equations to study the global behavior of solution with small initial data and unconditional uniqueness of local solutions (see e.g. \cite{Babin}, \cite{GMS}, \cite{Zihua2}, \cite{Shatah}).

Here, we give a generalization of the main theorems. We have the same results for the following $(2j+1)$-st order dispersive equations:
\EQS{
\begin{split}
&(\p_t +\ga_0\p_x^{2j+1}+\ga_1\p_x^{2j-1}+\cdots+\ga_{j-1}\p_x^3+ \ga_j \p_x) u(t,x)\\
= &N( \p_x^3 u, \p_x^2 u, \p_xu, u), \ \ \ \ (t,x)\in [-T,T] \times \T,\label{he1}
\end{split}
}
where $j\ge 3$, $\ga_0, \ga_1,\ldots, \ga_j$ are real constants with $\ga_0 \neq 0$.
Since the nonlinear term $N$ of \eqref{he1} is exactly same as \eqref{e1}, it has the same difficulty with the derivative loss.
On the other hand, the dispersive effect of the linear part of \eqref{he1} is stronger than that of \eqref{e1}. Therefore, the normal form reduction works better (see \eqref{ineq_oscillation_higher}) and we easily obtain the following result as a corollary of Theorems \ref{thm_nonparabolic}, \ref{thm_parabolic1} and \ref{thm_parabolic2}.
\begin{cor}\label{thm_higher_order}
Theorems \ref{thm_nonparabolic}, \ref{thm_parabolic1} and \ref{thm_parabolic2} hold even if we replace \eqref{e1} with  \eqref{he1}.
\end{cor}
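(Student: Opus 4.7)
The plan is to repeat the proofs of Theorems \ref{thm_nonparabolic}, \ref{thm_parabolic1}, and \ref{thm_parabolic2} almost verbatim, changing only the dispersion symbol in the modified energy. Since the nonlinear term $N(\p_x^3 u,\p_x^2 u,\p_x u,u)$ in \eqref{he1} is identical to that of \eqref{e1}, the functional $P_N$, the dichotomy between parabolic and non-parabolic resonance types, and in particular the resonant contribution that produces the parabolic smoothing term in the energy inequality \eqref{energyineq} are all unchanged. The only thing that is different is the phase function $\Phi^{(p_j)}$ entering the normal form correction term in \eqref{menergy}: for \eqref{he1} it becomes
\EQQ{
\Phi^{(p_j)}_{\mathrm{new}}(k_1,\ldots,k_{p_j+1})=\sum_{\ell=1}^{p_j+1}\om(k_\ell),\quad \om(k)=\ga_0 k^{2j+1}+\ga_1 k^{2j-1}+\cdots+\ga_j k,
}
on the hyperplane $k_1+\cdots+k_{p_j+1}=0$.

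First I would redefine $E_s(u)$ exactly as in \eqref{menergy} but with $\Phi^{(p_j)}$ replaced by $\Phi^{(p_j)}_{\mathrm{new}}$. Next I would verify the higher-order oscillation estimate referenced as \eqref{ineq_oscillation_higher}: on the non-resonant region $|\Phi^{(p_j)}_{\mathrm{new}}|$ is bounded below by a quantity larger than its fifth-order analogue by a factor of order $k_{\max}^{2j-4}$, since $\om'(k)\sim \ga_0 k^{2j}$ for large $|k|$. This extra gain is then plugged into the multilinear analysis of the type in Proposition \ref{multi_est2}; because every resulting denominator is strictly larger than in the $j=2$ case, the non-resonant multilinear bounds hold with the same (indeed strictly better) constants. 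The resonant part, where $\Phi^{(p_j)}_{\mathrm{new}}$ degenerates, is unchanged in form and still produces the term $P_N(u(t))\|\p_x^{s+1}u(t)\|_{L^2}^2$ on the left of the energy inequality, because it originates from the nonlinearity rather than from the dispersion.

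Once the analogue of \eqref{energyineq} is in place, the rest of the argument—parabolic regularization via \eqref{es1}--\eqref{es2}, the a priori estimates on $E_s(u_\e(t))$, passage to the limit $\e\to 0$, uniqueness, and continuous dependence—carries over without change, since these arguments see the linear part only through $E_s$ and the energy inequality. The non-existence Theorem \ref{thm_parabolic2} also adapts verbatim: its proof relies only on the sign of $P_N(\vp)$ together with the structural lower bound on the parabolic term in the energy identity, neither of which depends on the order of the dispersion. The main (and essentially only) obstacle is the explicit verification of the improved oscillation bound \eqref{ineq_oscillation_higher}, but this is a routine algebraic computation with $\om(k)$, made only easier by the fact that the leading term $\ga_0 k^{2j+1}$ dominates all the lower-order perturbations $\ga_1 k^{2j-1},\ldots,\ga_j k$ once $|k|$ is large.
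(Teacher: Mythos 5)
Your proposal matches the paper's own argument: both redefine the phase function for the $(2j+1)$-st order symbol, observe that the oscillation lower bound only improves (the paper records $|\Phi^{(p)}|\gtrsim |k_p|^{2j}|k_{(1,p-1)}|$ but notes the fifth-order bound already suffices), and conclude that the nonlinear/resonant analysis and the Section 4 arguments carry over unchanged. The only cosmetic discrepancy is that your $\om(k)$ omits the factor $i$ and the alternating signs $(-1)^{j-\ell}$ coming from $\F[\p_x^{2\ell+1}]=(ik)^{2\ell+1}$, which is immaterial since only $|\Phi^{(p)}|$ enters the estimates.
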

Finally, we give a remark on the KdV hierarchy.
We consider the $(2j+1)$-st order dispersive equations in the following form:
\EQ{\label{high_eqs}
(\p_t+\p_x^{2j+1})u(t,x)=\sum_{k=2}^{j+1}N_{j,k}(u), \ \ N_{j,k}(u)=\sum_{|l|=2(j-k)+3}\ga_{j,k,l}\p_x^{l_0}\prod_{i=1}^k \p_x^{l_i}u,
}
where $|l|=l_0+\cdots+l_k, l_0 \ge 1, l_i \in \N \cup \{0\}$ for $i=1,\ldots,k$
and $\ga_{j,k,l} \in \R$.
We define the rank $r$ of monomial $\p_x^{l_0}\prod_{i=1}^k \p_x^{l_i}u$ by $r=k+|l|/2$ where $k$ is the number of factors (degree) and $|l|$ is the total number of differentiations (derivative index).
Note that the rank of all monomials contained in the nonlinear term of \eqref{high_eqs} equals $j+3/2$ and the nonlinear term is in divergence form.
When the coefficients $\ga_{j,k,l}$ satisfy a condition, \eqref{high_eqs} is
the $(2j+1)$-st order KdV equation of the member of the KdV hierarchy (see \cite{Gru}, \cite{Schwarz} for more details). For general $\ga_{j,k,l}$, \eqref{high_eqs} is not an integrable system and is not even a Hamiltonian system.
In \cite{Gru}, Gr\"unrock proved the local well-poedness of \eqref{high_eqs}  in $\ha{H}_s^r(\R)$ for general $\ga_{j,k,l}$ when $x\in \R$.
In contrast, the local well-posedness does not always hold for general $\ga_{j,k,l}$ when $x\in \T$.
For instance, we have non-existence result for
\EQQ{
(\p_t+\p_x^{11})u(t,x)=\p_x(\p_x u)^4
}
when $x\in \T$ by Corollary \ref{thm_higher_order} because $P_N(u)=\frac{2}{\pi}\int_\T (\p_x u)^3\, dx \not\equiv 0$ when $N=\p_x(\p_x u)^4$.
This means that the divergence form and the rank are not enough and we need to assume a condition on $\ga_{j,k,l}$ to prove the local well-posedness of \eqref{high_eqs} when $x\in \T$.
In fact, Schwarz Jr.~ used a property of complete integrability to show the unique existence of the solution to the higher order KdV equations of the member of the KdV hierarchy in \cite{Schwarz}.

The rest of the paper is organized as follows.
In Section 2, we give some notations, preliminaries and show the local well-posedness of
the following parabolic regularized equation:
\EQS{
\begin{split}
&(\p_t +\e\p_x^4+\ga_0\p_x^5+\ga_1 \p_x^3+ \ga_2 \p_x) u_\e(t,x)\\
&\hspace*{11em}= N(\p_x^3 u_\e, \p_x^2 u_\e, \p_xu_\e, u_\e), \quad (t,x)\in [0,T_\e) \times \T,
\end{split}\label{es1}\\
&u_\e(0,\cdot)=\vp(\cdot),\label{es2}
}
when $\e \in (0,1]$. 
In Section 3, we show the energy inequality for the difference of two solutions to \eqref{es1}, which is the main estimate in this paper.
In Section 4, we show the main theorems by combining the local well-posedness for the regularized equation (Proposition \ref{para_exist}),
the comparison lemma between the Sobolev norm and
the energy (Lemma \ref{sobolev_energy}) and the energy inequality (Proposition \ref{thm_energy_est})
with Bona-Smith's approximation argument.
At the end of Section 4, we give an outline of the proof of Corollary \ref{thm_higher_order}.

\section*{Acknowledgement}
This research was supported by KAKENHI (25400158).


\section{Notations, Preliminaries and Parabolic regularized equation}
First, we give some notations.
The Fourier transform and the Sobolev norm are defined as below:
\EQQ{
&\F[f](k):=\ha{f}(k):=\frac{1}{2\pi}\int_\T f(x)e^{-ikx} \, dx, \ \ \
\F^{-1}[\ha{f}](x):=\sum_{k=-\infty}^{k=\infty}\ha{f}(k)e^{ikx},\\
&\|f\|_{H^s}:=\| \LR{k}^s \ha{f}(k) \|_{\ell^2}, \ \ \LR{k}:=(1+|k|^2)^{1/2}.
}
By the Plancherel theorem, for the $L^2$-inner product, we have
\EQQ{
(f,g)_{L^2}=\frac{1}{2\pi}\int_\T fg \, dx=\sum_{k\in \Z} \ha{f}(k)\ha{g}(-k)=(\ha{f}, \ha{g})_{\ell^2}.
}
Put $\Z_0^{(p)}:=\{(k_1,\ldots,k_p,k_{p+1}) \in \Z^{p+1} \, | \, k_{(1,p+1)}=0\}$ where $k_{(l,m)}$ means $\sum_{j=l}^m k_j$.
Put $\phi(k):=i(\ga_0k^5-\ga_1k^3+\ga_2k)$ and $\Phi^{(p)}(\vec{k}^{(p)}):=-\sum_{l=1}^{p+1}\phi(k_l)=\phi(k_{(1,p)})-\sum_{l=1}^{p}\phi(k_l)$ for $\vec{k}^{(p)}:=(k_1,\ldots,k_p,k_{p+1}) \in \Z_0^{(p)}$.
\begin{defn}
A multiplier is a function on $\Z_0^{(p)}$.
For a multiplier $M$ and functions $\{f_l\}_{l=1}^{p+1}$ on $\T$, we define
multilinear operators:
\EQQ{
&\Ga^{(p)}(M;f_1,\ldots,f_p,f_{p+1}):=\F^{-1} \Big[\sum_{\vec{k}^{(p)}\in \Z_0^{(p)}} M(\vec{k}^{(p)})\prod_{l=1}^{p+1}\ha{f_l}(k_l)\Big].
}
\end{defn}
Put
\EQQ{
D_{a,b,c}:=\prod_{l=1}^a(ik_l)^3\prod_{l=a+1}^{a+b}(ik_l)^2\prod_{l=a+b+1}^{a+b+c}(ik_l), 
}
which is used to describe nonlinear terms. For the $L^2$-inner product $(\cdot,\cdot)_{L^2}$, it follows that
\EQ{
&(N_j(f),g)_{L^2}=\Ga^{(p)}(\la_jD_{a_j,b_j,c_j};f,\ldots,f,g).\label{eq2.1}
}
For an integer $p \ge 2$, we define multipliers $M_H^{(p)}$ and $M_{NZ}^{(p)}$ as below:
\EQQS{
&M_{H}^{(p)}(\vec{k}^{(p)}):=
\begin{cases}
1, \,\, \text{when}\,\, \min\{|k_p|^{4/5},
 |k_{p+1}|^{4/5}\} \ge C \max \{|k_1|,\ldots,|k_{p-1}|\},\\
0, \,\, \text{otherwise},
\end{cases}\\
&M_{NZ}^{(p)}(\vec{k}^{(p)}):=
\begin{cases}
1, \,\, \text{when}\,\, k_{(1,p-1)}\neq 0,\\
0, \,\, \text{when}\,\, k_{(1,p-1)}=0,
\end{cases}
}
where $C$ in the definition of $M_H^{(p)}$ is a sufficiently large constant, which is determined by Lemma \ref{oscillation_est} and Lemma \ref{oscillation_est2}.
\begin{lem}\label{lem_MH}
Let $\vec{k}^{(p)}\in \supp \, (1-M_H^{(p)})\subset \Z_0^{(p)}$.
Then,
\EQQ{
\max\{|k_{p}|, |k_{p+1}|\} \lec \max_{1\le l\le p-1} |k_l|^{5/4}.
}
\end{lem}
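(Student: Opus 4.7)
The plan is to exploit the two structural facts in play: the zero-sum constraint from $\vec{k}^{(p)} \in \Z_0^{(p)}$ and the negation of the defining condition of $M_H^{(p)}$. Set $K := \max_{1 \le l \le p-1} |k_l|$, $m := \min\{|k_p|, |k_{p+1}|\}$, $M := \max\{|k_p|, |k_{p+1}|\}$. Since $M_H^{(p)}(\vec{k}^{(p)}) = 0$, the inequality
\EQQ{
\min\{|k_p|^{4/5}, |k_{p+1}|^{4/5}\} \ge C K
}
fails, so $m^{4/5} < C K$, i.e.\ $m \lec K^{5/4}$.

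Next I would use the constraint $k_1 + \cdots + k_p + k_{p+1} = 0$ to control $M$. Split into two cases depending on whether $M = |k_p|$ or $M = |k_{p+1}|$. In either case, $M$ equals the modulus of the sum of the other $p$ indices, so by the triangle inequality
\EQQ{
M \le \sum_{l=1}^{p-1} |k_l| + m \le (p-1) K + m \lec K + K^{5/4} \lec K^{5/4},
}
where the last step uses $K \ge 1$ (which we may assume, since when $K = 0$ the zero-sum constraint forces $k_p = -k_{p+1}$, and then the failure of the $M_H^{(p)}$ condition gives $|k_p|^{4/5} = |k_{p+1}|^{4/5} < 0$, impossible unless $k_p = k_{p+1} = 0$, making the conclusion trivial).

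The proof is essentially a one-line application of the triangle inequality; there is no real obstacle, only a minor bookkeeping issue around the degenerate case $K = 0$, which is immediate from the zero-sum relation. The implicit constant in the conclusion depends only on $p$ and on the constant $C$ appearing in the definition of $M_H^{(p)}$.
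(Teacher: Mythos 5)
Your proof is correct and follows essentially the same route as the paper: the failure of the $M_H^{(p)}$ condition bounds $\min\{|k_p|,|k_{p+1}|\}$ by $K^{5/4}$, and the zero-sum constraint plus the triangle inequality then bounds the larger of the two. The only cosmetic difference is that the paper reduces to the case $|k_p|\le|k_{p+1}|$ by symmetry rather than introducing $m$ and $M$; your aside on $K=0$ is harmless (that case is in fact vacuous, since $\min\{|k_p|^{4/5},|k_{p+1}|^{4/5}\}\ge 0$ always holds, so such $\vec{k}^{(p)}$ never lie in $\supp(1-M_H^{(p)})$).
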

\begin{proof}
By symmetry, we  only need to consider the case $|k_{p}| \le |k_{p+1}|$.
When $\vec{k}^{(p)}\in \supp \, (1-M_H^{(p)})\subset \Z_0^{(p)}$,
it follows that $|k_{p}| \lec \max_{1\le l\le p-1}|k_l|^{5/4}$.
Thus, $|k_{p+1}|=|k_{(1,p)}|\lec \max_{1\le l\le p-1} |k_l|^{5/4}$.
\end{proof}
Put
\EQQ{
M_{NR,j}:=M_{H}^{(p_j)}M_{NZ}^{(p_j)}\la_j\Big((s-3/2)a_j(ik_{(1,p_j-1)})D_{a_j-1,b_j,c_j}+b_j  D_{a_j,b_j-1,c_j}\Big).
}
Obviously, it follows that
\EQ{\label{eq_mnr}
|M_{NR,j}| \lec |k_{(1,p_j-1)}|\prod_{l=1}^{p_j-1} \LR{k_l}^3,
}
where the implicit constant depends on $s$. 
\begin{lem}\label{byparts_lem}
Let $M^{(p)}(\vec{k}^{(p)})$ be symmetric with respect to $k_p$ and $k_{p+1}$.
Then,
\EQ{
\Ga^{(p)}\big((ik_{p}\big)M^{(p)};f,\ldots,f,g,g)=&\Ga^{(p)}\big((ik_{p+1}\big)M^{(p)};f,\ldots,f,g,g)\\
=&-\frac{1}{2}\Ga^{(p)}\big((ik_{(1,p-1)})M^{(p)};f,\ldots,f,g,g\big),\label{sym1}
}
\EQ{
\Ga^{(p)}\big((ik_{p})^3M^{(p)};f,\ldots,f,g,g\big)=&-\frac{1}{2}\Ga^{(p)}\big((ik_{(1,p-1)})^3M^{(p)};f,\ldots,f,g,g\big)\\
+\frac{3}{2}\Ga^{(p)}\big((&ik_{(1,p-1)})(ik_p)(ik_{p+1})M^{(p)};f,\ldots,f,g,g\big).\label{sym2}
}

\end{lem}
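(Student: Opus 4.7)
The plan is to exploit three things simultaneously: (i) the constraint $k_{(1,p+1)}=0$ built into $\Z_0^{(p)}$, which gives $ik_p+ik_{p+1}=-ik_{(1,p-1)}$ on the summation domain; (ii) the symmetry of $M^{(p)}$ in its last two arguments; and (iii) the fact that the last two function inputs are identical (both $g$), so that $\ha g(k_p)\ha g(k_{p+1})$ is also symmetric under $k_p\leftrightarrow k_{p+1}$. Everything then reduces to relabeling summation indices and some trivial algebra on monomials in $ik_p, ik_{p+1}$.

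First I would establish the basic swap: in the series defining $\Ga^{(p)}((ik_p)M^{(p)};f,\ldots,f,g,g)$, replace $k_p$ with $k_{p+1}$ as a dummy index. Since $M^{(p)}$ and $\ha g(k_p)\ha g(k_{p+1})$ are invariant under this transposition, and since $\Z_0^{(p)}$ itself is stable under the swap, we obtain
\[
\Ga^{(p)}((ik_p)M^{(p)};f,\ldots,f,g,g)=\Ga^{(p)}((ik_{p+1})M^{(p)};f,\ldots,f,g,g).
\]
Adding the two expressions and using linearity in the multiplier, together with $ik_p+ik_{p+1}=-ik_{(1,p-1)}$ on $\Z_0^{(p)}$, halving yields \eqref{sym1}.

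For \eqref{sym2}, the same swap argument gives $\Ga^{(p)}((ik_p)^3 M^{(p)};\ldots)=\Ga^{(p)}((ik_{p+1})^3 M^{(p)};\ldots)$. Then I would apply the elementary identity
\[
a^3+b^3=(a+b)^3-3ab(a+b),
\]
with $a=ik_p$, $b=ik_{p+1}$. On $\Z_0^{(p)}$ we substitute $a+b=-ik_{(1,p-1)}$, which converts the sum of cubes into $-(ik_{(1,p-1)})^3+3(ik_{(1,p-1)})(ik_p)(ik_{p+1})$. Dividing by $2$ and pulling the scalar factors inside the multiplier slot gives \eqref{sym2}.

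There is essentially no obstacle here: the argument is purely combinatorial and requires no analytic estimates. The one small point to keep in mind is that when moving factors like $(ik_{(1,p-1)})$ into the multiplier, the resulting multiplier is still a bona fide function on $\Z_0^{(p)}$, so the definition of $\Ga^{(p)}$ applies without change; and the symmetry hypothesis on $M^{(p)}$ is used only to justify the initial relabeling step, not elsewhere.
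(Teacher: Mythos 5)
Your proposal is correct and follows essentially the same route as the paper: relabel the dummy indices $k_p\leftrightarrow k_{p+1}$ using the symmetry of $M^{(p)}$ and of $\ha g(k_p)\ha g(k_{p+1})$, average the two expressions, and then apply $ik_p+ik_{p+1}=-ik_{(1,p-1)}$ (respectively $a^3+b^3=(a+b)^3-3ab(a+b)$) on $\Z_0^{(p)}$. No gaps.
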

\begin{proof}
By symmetry,
\EQQ{
{\Ga}^{(p)}((ik_{p})M^{(p)},f,\ldots,f,g,g)&={\Ga}^{(p)}\big((ik_{p+1})M^{(p)},f,\ldots,f,g,g\big)\\
&={\Ga}^{(p)}\Big( \frac{(ik_p+ik_{p+1})M^{(p)}}{2},f,\ldots,f,g,g\Big).
}
Since $ik_p+ik_{p+1}=-ik_{(1,p-1)}$ for $\vec{k}^{(p)}\in \Z_0^{(p)}$,
we obtain \eqref{sym1}. By symmetry,
\EQQ{
{\Ga}^{(p)}((ik_{p})^3M^{(p)},f,\ldots,f,g,g)&={\Ga}^{(p)}\big((ik_{p+1})^3M^{(p)},f,\ldots,f,g,g\big)\\
&={\Ga}^{(p)}\Big( \frac{((ik_p)^3+(ik_{p+1})^3)M^{(p)}}{2},f,\ldots,f,g,g\Big).
}
Since $(ik_p)^3+(ik_{p+1})^3=-(ik_{(1,p-1)})^3+3(ik_p)(ik_{p+1})(ik_{(1,p-1)})$
for $\vec{k}^{(p)}\in \Z_0^{(p)}$, we obtain \eqref{sym2}.
\end{proof}
The following lemma is the Gagliardo-Nirenberg inequality for periodic functions.
For the proof, see Section 2 in \cite{Schwarz}.
\begin{lem}\label{lemGN}
Assume that integers $l$ and $m$ satisfy $0\le l \le m-1$ and a real number $p$ satisfies $2\le p \le \infty$.
Put $\al=(l+1/2-1/p)/m$. Then, we have
\EQQ{
\|\p_x^l f\|_{L^p} \lec 
\begin{cases}
\|f\|_{L^2}^{1-\al}\|\p_x^m f\|_{L^2}^\al, \qquad ( \text{when}\,\, 1\le l\le m-1),\\
\|f\|_{L^2}^{1-\al}\|\p_x^m f\|_{L^2}^\al +\|f\|_{L^2}, \qquad ( \text{when} \,\, l=0),
\end{cases}
}
for any $f\in H^m(\T)$.
Particularly, $\|\p_x^l f\|_{L^p} \lec \|f\|_{L^2}^{1-\al}\|f\|_{H^m}^\al$.
\end{lem}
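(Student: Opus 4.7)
The plan is to reduce to the case of zero-mean functions, establish the endpoints $p=2$ and $p=\infty$ directly from the Fourier series representation, and then interpolate.

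First I would split off the mean: write $f = c + g$ with $c := \ha{f}(0)$ and $\ha{g}(0)=0$. For $l \ge 1$, one has $\p_x^l f = \p_x^l g$ and $\|\p_x^m f\|_{L^2} = \|\p_x^m g\|_{L^2}$; the zero-mean condition gives the Poincar\'e-type bound
\[
\|g\|_{L^2}^2 = \sum_{k\ne 0}|\ha{g}(k)|^2 \le \sum_{k\ne 0}|k|^{2m}|\ha{g}(k)|^2 = \|\p_x^m g\|_{L^2}^2,
\]
since $|k|\ge 1$ on $k\ne 0$. For $l=0$, the triangle inequality gives $\|f\|_{L^p} \le |c|\,\|1\|_{L^p} + \|g\|_{L^p} \lec \|f\|_{L^2} + \|g\|_{L^p}$, which produces exactly the additional $\|f\|_{L^2}$ term in the stated inequality, and $\|g\|_{L^2}\le \|f\|_{L^2}$. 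Thus it is enough to prove the homogeneous inequality $\|\p_x^l g\|_{L^p}\lec \|g\|_{L^2}^{1-\al}\|\p_x^m g\|_{L^2}^\al$ for $g$ with zero mean.

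For the $p=2$ endpoint, Plancherel and the factorization $|k|^{2l}|\ha{g}(k)|^2 = \big(|\ha{g}(k)|^2\big)^{1-l/m}\big(|k|^{2m}|\ha{g}(k)|^2\big)^{l/m}$ combined with H\"older on $\ell^1(\Z)$ with exponents $m/(m-l)$ and $m/l$ give $\|\p_x^l g\|_{L^2} \le \|g\|_{L^2}^{1-l/m}\|\p_x^m g\|_{L^2}^{l/m}$, which is the claim with $\al = l/m$. For the $p=\infty$ endpoint, for any threshold $N\ge 1$ I split the Fourier series and apply Cauchy--Schwarz:
\[
\|\p_x^l g\|_{L^\infty} \le \!\!\sum_{0<|k|\le N}\!\!|k|^l|\ha{g}(k)| + \!\!\sum_{|k|>N}\!\!|k|^l|\ha{g}(k)| \lec N^{l+\frac12}\|g\|_{L^2} + N^{l-m+\frac12}\|\p_x^m g\|_{L^2},
\]
the convergence of the second tail sum being guaranteed by $2(m-l)\ge 2>1$. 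Optimizing by taking $N \sim (\|\p_x^m g\|_{L^2}/\|g\|_{L^2})^{1/m}$, which is $\ge 1$ by the Poincar\'e bound, yields $\|\p_x^l g\|_{L^\infty} \lec \|g\|_{L^2}^{1-(l+\frac12)/m}\|\p_x^m g\|_{L^2}^{(l+\frac12)/m}$, matching $\al=(l+\tfrac12)/m$ at $p=\infty$.

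Finally I interpolate via log-convexity: $\|h\|_{L^p}\le \|h\|_{L^2}^{2/p}\|h\|_{L^\infty}^{1-2/p}$ applied to $h=\p_x^l g$, combined with the two endpoint estimates, gives the claim, since the exponent of $\|\p_x^m g\|_{L^2}$ works out to
\[
(l/m)(2/p) + ((l+\tfrac12)/m)(1-2/p) = (l+\tfrac12-1/p)/m = \al,
\]
and the exponent of $\|g\|_{L^2}$ is forced to be $1-\al$ by homogeneity. The only real subtlety is the bookkeeping with the constant mode for $l=0$, which is precisely the source of the additive $\|f\|_{L^2}$ correction in the statement; the remaining steps are routine Fourier-series manipulations with the Cauchy--Schwarz/optimization trick at $p=\infty$ being the decisive computation.
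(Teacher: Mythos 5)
Your proof is correct. Note that the paper does not actually prove this lemma: it simply refers the reader to Section~2 of \cite{Schwarz}, so there is no in-paper argument to compare against. Your self-contained route --- splitting off the mean $\ha{f}(0)$ (which is exactly what forces the additive $\|f\|_{L^2}$ term when $l=0$), proving the two endpoints $p=2$ (Plancherel plus H\"older on $\ell^1$) and $p=\infty$ (dyadic-free splitting of the Fourier sum at a threshold $N$, Cauchy--Schwarz on each piece, then optimizing $N$, with the tail converging because $2(m-l)>1$), and finally interpolating via $\|h\|_{L^p}\le\|h\|_{L^2}^{2/p}\|h\|_{L^\infty}^{1-2/p}$ --- is sound, and the exponent bookkeeping $(l/m)(2/p)+((l+\tfrac12)/m)(1-2/p)=\al$ checks out. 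The Poincar\'e bound $\|g\|_{L^2}\le\|\p_x^m g\|_{L^2}$ for mean-zero $g$ correctly guarantees that the optimal threshold satisfies $N\ge 1$, which is the one place the periodic setting could have caused trouble. This is a perfectly acceptable replacement for the external citation.
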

\begin{lem}\label{lem_ga4}
(i) Let $m,p\in \N$, $m\le p$, $a,b,c\in \N\cup\{0\}$ and $M^{(p)}(\vec{k}^{(p)})$ be a multiplier such that
\EQQ{
|M^{(p)}(\vec{k}^{(p)})|\lec \LR{k_{p+1}}^c\max_{m\le l\le p}\LR{k_l}^{b} \prod_{l=1}^{p}\LR{k_l}^a.
}
Then, it follows that
\EQ{\label{ee3.41}
&\big|{\Ga}^{(p)}\big( M^{(p)};f,\ldots,f,h\big)
-{\Ga}^{(p)}\big( M^{(p)};f,\ldots,f,g,\ldots,g,h\big)\big|\\
\lec& \|h\|_{H^{c}}(\|f\|_{H^{a+1}}+\|g\|_{H^{a+1}})^{p-2}\\
\times&\{ \|f-g\|_{H^{a+b}}(\|f\|_{H^{a+1}}+\|g\|_{H^{a+1}})+\|f-g\|_{H^{a+1}}(\|f\|_{H^{a+b}}+\|g\|_{H^{a+b}})\},
}
where the components from the first to $(m-1)$-st are $f$ and the components from the $m$-th to $p$-th are $g$ in the second term of the left-hand side of \eqref{ee3.41}.\\
(ii) Let $m,p\in \N$, $m\le p-1$, $a,b,c,d\in \N\cup\{0\}$ and $M^{(p)}(\vec{k}^{(p)})$ be a multiplier such that
\EQQ{
|M^{(p)}(\vec{k}^{(p)})|\lec \LR{k_{p}}^d\LR{k_{p+1}}^c\max_{m\le l\le p-1}\LR{k_l}^{b} \prod_{l=1}^{p-1}\LR{k_l}^a.
}
Then, it follows that
\EQ{\label{ee3.412}
&\big|{\Ga}^{(p)}\big( M^{(p)};f,\ldots,f,h_1,h_2\big)
-{\Ga}^{(p)}\big( M^{(p)};f,\ldots,f,g,\ldots,g,h_1,h_2\big)\big|\\
\lec& \|h_1\|_{H^{d}}\|h_2\|_{H^{c}}(\|f\|_{H^{a+1}}+\|g\|_{H^{a+1}})^{p-3}\\
\times&\{ \|f-g\|_{H^{a+b+1}}(\|f\|_{H^{a+1}}+\|g\|_{H^{a+1}})\\
&+\|f-g\|_{H^{a+1}}(\|f\|_{H^{a+b+1}}+\|g\|_{H^{a+b+1}})\}
}
where the components from the first to $(m-1)$-st are $f$ and the components from the $m$-th to $(p-1)$-st are $g$ in the second term of the left-hand side of \eqref{ee3.412}.
\end{lem}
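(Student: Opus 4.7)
The plan is to combine a telescoping expansion---which isolates a single factor of $f-g$---with a multilinear H\"older estimate applied on the Fourier side. For part (i), I would start from the identity $f^{\otimes p} - f^{\otimes(m-1)}\otimes g^{\otimes(p-m+1)} = \sum_{j=m}^{p} f^{\otimes(j-1)}\otimes (f-g)\otimes g^{\otimes(p-j)}$, so that multilinearity of $\Ga^{(p)}$ rewrites the left-hand side of \eqref{ee3.41} as a sum over $j = m, \ldots, p$ of terms in which the $j$-th slot is $f-g$ and the $(p+1)$-st slot is $h$. Bounding each such term by $\sum_{\vec{k}\in \Z_0^{(p)}}|M^{(p)}(\vec{k})|\prod_l|\ha{f}_l(k_l)|$ and invoking the multiplier hypothesis, I would then dominate $\max_{m\le l\le p}\LR{k_l}^b$ by $\sum_{l_0=m}^{p}\LR{k_{l_0}}^b$ and handle each choice of $l_0$ separately.

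For a fixed pair $(j,l_0)$, the next step is a multilinear H\"older estimate on $\T$. I would introduce $F_l(x) := \sum_{k} |\ha{f}_l(k)|\LR{k}^{\alpha_l} e^{ikx}$ with weights $\alpha_l$ read off from the multiplier, so that by Parseval $\sum_{\vec{k}\in \Z_0^{(p)}} \prod_l |\ha{f}_l(k_l)|\LR{k_l}^{\alpha_l}\lec \int_\T \prod_l F_l\,dx$. I would put $h$ (weight $c$) and exactly one further factor (the one carrying weight $a+b$) in $L^2$ and the remaining $p-1$ factors in $L^\infty$, using $\|F_l\|_{L^2}=\|f_l\|_{H^{\alpha_l}}$ together with the one-dimensional embedding
\[
\|F_l\|_{L^\infty}\le \sum_{k\in\Z}\LR{k}^{\alpha_l}|\ha{f}_l(k)|\lec \|f_l\|_{H^{\alpha_l+1}},
\]
which is Cauchy--Schwarz combined with the convergence of $\sum_{k\in\Z}\LR{k}^{-2}$; thus every $L^\infty$ slot costs exactly one extra derivative. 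When $l_0=j$, the weight-$(a+b)$ $L^2$ slot is $f-g$ and the $L^\infty$ slots are $f$'s and $g$'s, yielding $\|h\|_{H^c}\|f-g\|_{H^{a+b}}(\|f\|_{H^{a+1}}+\|g\|_{H^{a+1}})^{p-1}$. When $l_0\neq j$, the weight-$(a+b)$ $L^2$ slot is an $f$ or $g$, while $f-g$ goes to $L^\infty$ with weight $a$ (cost $\|f-g\|_{H^{a+1}}$), producing $\|h\|_{H^c}(\|f\|_{H^{a+b}}+\|g\|_{H^{a+b}})\|f-g\|_{H^{a+1}}(\|f\|_{H^{a+1}}+\|g\|_{H^{a+1}})^{p-2}$. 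Summing over $j$ and $l_0$ and combining the two cases reproduces \eqref{ee3.41}.

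Part (ii) would then follow by the same telescoping-plus-H\"older scheme, but with telescoping range $j = m, \ldots, p-1$ and the splitting $\max_{m\le l\le p-1}\LR{k_l}^b = \sum_{l_0=m}^{p-1}\LR{k_{l_0}}^b$. The structural change is that $h_1, h_2$ now naturally occupy the two $L^2$ slots (with weights $d$ and $c$), which forces every other factor---including the one carrying the extra $b$ derivatives---into $L^\infty$. Since each $L^\infty$ slot costs one extra derivative, this produces $\|f-g\|_{H^{a+b+1}}$ in the case $l_0=j$ and $(\|f\|_{H^{a+b+1}}+\|g\|_{H^{a+b+1}})\|f-g\|_{H^{a+1}}$ in the case $l_0\neq j$, and assembling everything gives \eqref{ee3.412}. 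I do not expect any genuine analytic obstacle; the only step requiring care is the bookkeeping of the two cases according to whether the extra $b$ derivatives fall on $f-g$ or on a separate $f$- or $g$-slot, and the verification that their contributions combine to the two summands inside the braces on the right-hand side of each estimate.
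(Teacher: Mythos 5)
Your proposal is correct and follows essentially the same route as the paper: the paper's proof consists of exactly the telescoping identity that isolates a single $f-g$ factor, followed by the phrase ``by the Sobolev inequality,'' which is precisely the multilinear H\"older/Parseval argument (two $L^2$ slots, the rest in $L^\infty$ at the cost of one derivative each) that you carry out in detail. Your bookkeeping of the two cases according to where the extra $b$ derivatives land reproduces the two summands in the braces correctly, so nothing is missing.
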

\begin{proof}
Since
\EQQ{
&{\Ga}^{(p)}\big( M^{(p)};f,\ldots,f,h\big)
-{\Ga}^{(p)}\big( M^{(p)};f,\ldots,f,g,\ldots,g,h\big)\\
=&{\Ga}^{(p)}\big( M^{(p)};f,\ldots,f,f-g,h\big)
  +{\Ga}^{(p)}\big( M^{(p)};f,\ldots,f,f-g,g,h\big)\\
&+\cdots+{\Ga}^{(p)}\big( M^{(p)};f,\ldots,f,f-g,g,\ldots,g,h\big),
}
by the Sobolev inequality, we get \eqref{ee3.41}.
In the same  manner we have \eqref{ee3.412}.
\end{proof}
\begin{lem}\label{oscillation_est}
Let $p\ge 2$, $|k_{p}|^{4/5}\ge C\max_{1\le l \le p-1}\{|k_l|\}$ and $C$ be sufficiently large. Then,
\EQ{\label{ineq_oscillation}
|\Phi^{(p)}(\vec{k}^{(p)})| \gec |k_p|^4|k_{(1,p-1)}|\sim|k_{p+1}|^4|k_{(1,p-1)}|
}
for $\vec{k}^{(p)} \in \Z_0^{(p)}$.
\end{lem}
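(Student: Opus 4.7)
My plan is to introduce $K:=k_{(1,p-1)}$ so that the zero-sum condition gives $k_{p+1}=-(k_p+K)$. Using the oddness of $\phi$, the definition rearranges as
$$\Phi^{(p)}(\vec{k}^{(p)})=\bigl[\phi(k_p+K)-\phi(k_p)\bigr]-\sum_{l=1}^{p-1}\phi(k_l),$$
so the problem splits into isolating the dominant piece of $\phi(k_p+K)-\phi(k_p)$ and absorbing $\sum_{l=1}^{p-1}\phi(k_l)$ as a lower-order error. If $K=0$ the asserted lower bound is $0$ and nothing is to prove, so I may assume $|K|\ge 1$; in particular $\max_{l\le p-1}|k_l|\ge 1$, and the hypothesis then forces $|k_p|\ge C^{5/4}$, which is large for $C$ large.

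Next I expand via the binomial theorem. Since $\phi(k)=i(\gamma_0k^5-\gamma_1k^3+\gamma_2k)$,
$$\phi(k_p+K)-\phi(k_p)=5i\gamma_0k_p^4K+i\gamma_0\sum_{j=0}^{3}\binom{5}{j}k_p^jK^{5-j}-i\gamma_1\sum_{j=0}^{2}\binom{3}{j}k_p^jK^{3-j}+i\gamma_2K,$$
so the leading contribution has modulus $5|\gamma_0|\,|k_p|^4|K|$. The hypothesis gives $|K|\le(p-1)|k_p|^{4/5}/C$, hence $|K|/|k_p|\le(p-1)/(C|k_p|^{1/5})\le(p-1)/C$. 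Therefore for $0\le j\le 3$,
$$|k_p|^j|K|^{5-j}=|k_p|^4|K|\cdot(|K|/|k_p|)^{4-j}\le|k_p|^4|K|\cdot((p-1)/C)^{4-j},$$
and the $\gamma_1,\gamma_2$ monomials are even smaller, contributing at most $O(|k_p|^2|K|)=O(|k_p|^{-2}\cdot|k_p|^4|K|)$ since $|k_p|\ge C^{5/4}$.

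For the low-frequency tail I estimate
$$\Bigl|\sum_{l=1}^{p-1}\phi(k_l)\Bigr|\lesssim\max_{l\le p-1}|k_l|^5\le|k_p|^4/C^5\le|k_p|^4|K|/C^5,$$
using $|K|\ge 1$. Summing everything and choosing $C$ large enough that the total error is at most $\tfrac{5}{2}|\gamma_0|\,|k_p|^4|K|$, I conclude $|\Phi^{(p)}|\ge\tfrac{5}{2}|\gamma_0|\,|k_p|^4|K|\gtrsim|k_p|^4|k_{(1,p-1)}|$. The companion equivalence $|k_{p+1}|^4|k_{(1,p-1)}|\sim|k_p|^4|k_{(1,p-1)}|$ follows from $k_{p+1}=-(k_p+K)$ together with $|K|\le|k_p|^{4/5}/C\ll|k_p|$, which gives $|k_{p+1}|\sim|k_p|$. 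The only (mild) obstacle is the bookkeeping of all mixed monomials in the binomial expansion; once the dominance $|K|\le|k_p|^{4/5}/C$ is used to convert each error into a negative power of $C$ (or of $|k_p|$) multiplying the leading term, the bound drops out uniformly.
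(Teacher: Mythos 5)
Your proof is correct and follows essentially the same route as the paper's: both isolate the quintic part of $\phi$, extract the leading term $5\gamma_0k_p^4k_{(1,p-1)}$ from the expansion of $k_{(1,p)}^5-\sum_l k_l^5$, and absorb the mixed binomial terms, the lower-order $\gamma_1,\gamma_2$ contributions, and $\sum_{l\le p-1}\phi(k_l)$ as errors controlled by the hypothesis $\max_{l\le p-1}|k_l|\le |k_p|^{4/5}/C$. Your version simply spells out the bookkeeping that the paper leaves implicit.
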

\begin{rem}\label{oscillation_rem}
We have the following factorization formulas:
\EQQ{
&\Phi^{(2)}=i\frac{5}{2}k_1k_2(k_1+k_2)(k_1^2+k_2^2+(k_1+k_2)^2),\\
&\Phi^{(3)}=i\frac{5}{2}(k_1+k_2)(k_2+k_3)(k_3+k_1)\big((k_1+k_2)^2+(k_2+k_3)^2+(k_3+k_1)^2\big).
}
Therefore, we can easily solve
\EQ{\label{factorization}
\Phi^{(p)}(\vec{k}^{(p)})=0
}
for $\vec{k}^{(p)} \in \Z_0^{(p)}$ when $p=2$ or $3$.
On the other hand, no factorization formula is known for $p\ge 4$ and
the distribution of the solutions seems to be complicated.
For instance, $\vec{k}^{(p)}=(24,28,67,-3,-54)$ satisfies \eqref{factorization} when $p=5$.
Therefore, it seems difficult to solve \eqref{factorization} when $p\ge 4$.
\end{rem}
\begin{proof}[Proof of Lemma \ref{oscillation_est}]
It is obvious when $k_{(1,p-1)}=0$.
Therefore, we assume $|k_{(1,p-1)}|\ge 1$.
Obviously,
\EQQ{
|\Phi^{(p)}(\vec{k}^{(p)})|&=|\phi(k_{(1,p)})-\sum_{l=1}^p\phi(k_l)|\\
&\ge |\ga_0|\Big|k_{(1,p)}^5 -\sum_{l=1}^p k_l^5\Big|-C|\ga_1||k_p|^3\\
&\gec |k_p|^4|k_{(1,p-1)}|-C\max_{1\le l\le p-1} |k_l|^5-C|\ga_1||k_p|^3\\
&\gec |k_p|^4|k_{(1,p-1)}|.
}
\end{proof}
\begin{defn}
Let $1\le l <m \le p+1$.
$T(k_l,k_m)$ is the transportation with respect to variables of a multiplier:
\EQQ{
T(k_l,k_m)\big[M(\vec{k}^{(p)})\big]
:=M(k_1,\ldots,k_{l-1},k_m,k_{l+1},\ldots,k_{m-1},k_l,k_{m+1},\ldots,k_{p+1}),
}
and $S(k_l,k_m)$ is the symmetrization operator:
\EQ{
S(k_l,k_m)\big[M(\vec{k}^{(p)})\big]:=\frac{1+T(k_l,k_m)}{2}[M(k_1,\ldots,k_{p+1})\big].\label{sym}
}
\end{defn}
Obviously, $S(k_l,k_m)\big[M(\vec{k}^{(p)})\big]$ is symmetric with $k_l$ and $k_m$, that is to say,
\EQQ{
T(k_l,k_m)\big[S(k_l,k_m)\big[M(\vec{k}^{(p)})\big]\big]=S(k_l,k_m)\big[M(\vec{k}^{(p)})\big].
}
\begin{lem}\label{oscillation_est2}
Let $p\ge 2$, $q \ge 2$ and $|k_{p+q-1}|^{4/5}\sim |k_{p+q}|^{4/5} \ge C\max_{1\le l \le p+q-2}\{|k_l|\}$ for sufficiently large $C=C(p,q)>0$. Then,
\EQ{\label{eq3.96}
&\Big|\big(1-S(k_{p+q-1},k_{p+q})\big)\Big[ \frac{1}{\Phi^{(p)}(k_{1},\ldots,k_{p-1},k_{(p,p+q-1)},k_{p+q})}\Big]\Big|\\
&\gec \max_{1\le l\le p+q-2} \frac{|k_l|^2}{|k_{p+q-1}|^5} \sim \max_{1\le l\le p+q-2} \frac{|k_l|^2}{|k_{p+q}|^5}
}
when $\vec{k}^{(p+q-1)} \in \Z_0^{(p+q-1)}$ and $k_{(1,p-1)}\neq 0$.
\end{lem}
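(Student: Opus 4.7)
The plan is to expand the symmetrization remainder as an explicit ratio, factor its numerator algebraically, and combine with Lemma~\ref{oscillation_est} to control the denominator. Writing $S=(1+T(k_{p+q-1},k_{p+q}))/2$ so that $1-S=(1-T)/2$ and setting $A:=k_{p+q-1}$, $B:=k_{p+q}$, $\ti K:=k_{(p,p+q-2)}$, the quantity becomes $(\Phi^{(p)}_2-\Phi^{(p)}_1)/(2\Phi^{(p)}_1\Phi^{(p)}_2)$, where $\Phi^{(p)}_1:=\Phi^{(p)}(k_1,\dots,k_{p-1},\ti K+A,B)$ and $\Phi^{(p)}_2$ is obtained by swapping $A\leftrightarrow B$ in the last two slots. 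The constraint $\vec k^{(p+q-1)}\in\Z_0^{(p+q-1)}$ reads $A+B+\ti K=-k_{(1,p-1)}$, and the strategy is to lower-bound the numerator and upper-bound the denominator.

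For the numerator I would write $\Phi^{(p)}_2-\Phi^{(p)}_1=g(A)-g(B)$ with $g(x):=\phi(\ti K+x)-\phi(x)$, substitute $\phi(x)=i(\ga_0 x^5-\ga_1 x^3+\ga_2 x)$, and factor. Binomial expansion of $(\ti K+x)^5-x^5$ and $(\ti K+x)^3-x^3$ shows that every term carries a factor of $\ti K$, and further grouping pulls out $(A-B)(A+B+\ti K)$; via the zero-sum constraint this last factor equals $-(A-B)k_{(1,p-1)}$, so the numerator admits a clean factorization with residual quadratic $5\ga_0(A^2+B^2+(A+B)\ti K+\ti K^2)-3\ga_1$. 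Under the hypothesis $|A|\sim|B|\sim|k_{p+q-1}|$, and since $|A+B|\le|k_{(1,p-1)}|+|\ti K|\ll|k_{p+q-1}|$, the integers $A,B$ must have opposite signs, which forces $|A-B|\sim|k_{p+q-1}|$; the quadratic then has magnitude $\sim|\ga_0||k_{p+q-1}|^2$ provided $C$ is chosen large compared to $|\ga_1/\ga_0|$.

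For the denominator I would apply Lemma~\ref{oscillation_est} with the large pair $(\ti K+A,B)$ (and similarly $(\ti K+B,A)$) playing the role of the last two arguments, obtaining $|\Phi^{(p)}_j|\sim|k_{(1,p-1)}|\,|k_{p+q-1}|^4$ and hence $|\Phi^{(p)}_1\Phi^{(p)}_2|\sim|k_{(1,p-1)}|^2|k_{p+q-1}|^8$. Inserting the numerator lower bound and dividing, one $|k_{(1,p-1)}|$ from the numerator cancels a factor from the denominator, and the concluding step is a size accounting that matches $|\ti K|$ and the leftover $|k_{(1,p-1)}|$ to $\max_{1\le l\le p+q-2}|k_l|^2$, treating separately the cases where the maximizing index lies in $\{1,\dots,p-1\}$ (where $|k_{(1,p-1)}|$ carries the contribution) or in $\{p,\dots,p+q-2\}$ (where $|\ti K|$ carries it) and using the hypothesis $\max_{l\le p+q-2}|k_l|\le|k_{p+q-1}|^{4/5}/C$. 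The hard part will be the algebraic factorization of $g(A)-g(B)$: as Remark~\ref{oscillation_rem} emphasizes, no factorization of $\Phi^{(p)}$ itself is available when $p\ge 4$, so the cancellation must be extracted purely from the symmetrization, which reduces to the one-variable polynomial identity for $g$ combined with the zero-sum identification $A+B+\ti K=-k_{(1,p-1)}$.
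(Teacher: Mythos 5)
Your algebraic work on the numerator is correct and in fact sharper than what the paper does: writing $1-S=(1-T)/2$, expressing the quantity as $(\Phi^{(p)}_2-\Phi^{(p)}_1)/(2\Phi^{(p)}_1\Phi^{(p)}_2)$, and factoring $g(A)-g(B)$ does yield the exact identity $\Phi^{(p)}_2-\Phi^{(p)}_1=i\,\ti K(A-B)(A+B+\ti K)\bigl(5\ga_0(A^2+B^2+(A+B)\ti K+\ti K^2)-3\ga_1\bigr)$ with $A+B+\ti K=-k_{(1,p-1)}$, and your size analysis of each factor is right. The genuine gap is the direction of the inequality you set out to prove. The ``$\gec$'' in the statement is a typo for ``$\lec$'': the paper's own proof bounds the numerator \emph{above} by $|k_{p+q}|^3\max_{1\le l\le p+q-2}|k_l|^2$ (via \eqref{eq3.97}) and the denominator \emph{below} by Lemma \ref{oscillation_est}, and the only place the lemma is invoked (the estimate of $M_{i,12}$ in the proof of Lemma \ref{Ki}) uses it as an upper bound. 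The lower bound you are aiming for is actually false: your own factorization shows the whole expression has magnitude comparable to $|\ti K|/\bigl(|k_{(1,p-1)}|\,|k_{p+q-1}|^5\bigr)$, which vanishes identically when $\ti K=k_{(p,p+q-2)}=0$ (the swap then does nothing to $\Phi^{(p)}$), while the right-hand side of \eqref{eq3.96} stays positive because $k_{(1,p-1)}\neq 0$. Even when $\ti K\neq 0$, the final ``size accounting'' you defer to cannot close, since $|\ti K|$ need not dominate $|k_{(1,p-1)}|\max_{1\le l\le p+q-2}|k_l|^2$ (the maximum runs over indices $l\le p-1$ that do not enter $\ti K$ at all).

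Two further remarks. First, if you flip the target to the upper bound, your argument closes immediately and is cleaner than the paper's: you only need the lower bound on $|\Phi^{(p)}_1|$ and $|T[\Phi^{(p)}_1]|$ from Lemma \ref{oscillation_est} (no two-sided $\sim$ is required, which is good, because Lemma \ref{oscillation_est} as stated only gives one direction), the cancellation $|\ti K(A+B+\ti K)|\lec |k_{(1,p-1)}|\max_{p\le l\le p+q-2}|k_l|$ against one factor of $|k_{(1,p-1)}|$ in the denominator, and $|A-B|\lec|k_{p+q-1}|$. Second, the paper avoids the full factorization altogether: it simply groups the quintic difference as $|k_{p+q}^4-k_{p+q-1}^4|\,|k_{(p,p+q-2)}|$ plus lower-order terms and bounds it crudely, which suffices for the upper bound. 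Your exact identity is a legitimate alternative route to the same (corrected) conclusion, but as written the proposal proves a statement that is not true.
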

\begin{proof}
The left-hand side of \eqref{eq3.96} is equal to
\EQ{\label{eq3.98}
\Big|\frac{1}{2\Phi^{(p)}}-\frac{1}{2T(k_{p+q-1},k_{p+q})[\Phi^{(p)}]}\Big|
=\Big|\frac{T(k_{p+q-1},k_{p+q})[\Phi^{(p)}]-\Phi^{(p)}}{2\Phi^{(p)} T(k_{p+q-1},k_{p+q})[\Phi^{(p)}]}\Big|.
}
By direct calculation, we have
\EQ{\label{eq3.97}
&|T(k_{p+q-1},k_{p+q})[\Phi^{(p)}(k_{1},\ldots,k_{p-1},k_{(p,p+q-1)},k_{p+q})]\\
& \ \ \ -\Phi^{(p)}(k_{1},\ldots,k_{p-1},k_{(p,p+q-1)},k_{p+q})|\\
\lec& \Big|\Big(-\sum_{l=1}^{p-1}\phi(k_l)-\phi(k_{(p,p+q)}-k_{p+q-1})-\phi(k_{p+q-1})\Big)\\
&-\Big(-\sum_{l=1}^{p-1}\phi(k_l)-\phi(k_{(p,p+q-1)})-\phi(k_{p+q})\Big)\Big|\\
\lec&|-(k_{(p,p+q)}-k_{p+q-1})^5+k_{p+q}^5-k_{p+q-1}^5+k_{(p,p+q-1)}^5|+|k_{p+q}|^3\\
\lec&|k_{p+q}^4-k_{p+q-1}^4||k_{(p,p+q-2)}|+|k_{p+q}|^3\max_{p\le l\le p+q-2}\LR{k_l}^2\\
\lec&|k_{p+q}|^3\max_{1\le l\le p+q-2}|k_l|^2.
}
Combining Lemma \ref{oscillation_est}, \eqref{eq3.98} and \eqref{eq3.97},
we conclude \eqref{eq3.96}. 
\end{proof}
Put
\EQ{\label{menergy1}
F_{s}(f,g):=&\frac{1}{2}\|\p_x^s(f-g)\|_{L^2}^2+\frac{1}{2}\|f-g\|_{L^2}^2\Big(1+C_s\sum_{j=1}^{j_0}\|f\|_{H^4}^{s(p_j-1)}\Big)\\
+& \sum_{j=1}^{j_0}{\Ga}^{(p_j)}\Big( \frac{(ik_{p_j})^{s+1}(ik_{p_j+1})^{s+1}M_{NR,j}}{\Phi^{(p_j)}}; f,\ldots,f,f-g,f-g \Big),
}
where $C_s$ is a sufficiently large constant such that Lemma \ref{sobolev_energy} holds and monotonically increasing with respect to $s$.
We can easily check that $F_s(f,g)$ is real valued because $M_{NR,j}(\vec{k}^{(p_j)})=\overline{M_{NR,j}(-\vec{k}^{(p_j)})}$ and $\Phi^{(p_j)}(\vec{k}^{(p_j)})=\overline{\Phi^{(p_j)}(-\vec{k}^{(p_j)})}$.
Note that
\EQ{\label{EF}
E_s(f)=F_s(f,0).
}
\begin{lem}[comparison lemma]\label{sobolev_energy}
Let $s \in \N$ and $C_s$ be sufficiently large. Then,
for any $f\in H^s(\T)\cap H^4(\T)$ and $g\in H^s(\T)$, it follows that
\EQQ{
F_s(f,g) \le  \|\p_x^s(f-g)\|_{L^2}^2+\|f-g\|_{L^2}^2\Big(1+C_s\sum_{j=1}^{j_0}\|f\|_{H^4}^{s(p_j-1)}\Big) \le 4F_s(f,g).
}
Particularly, by \eqref{EF},
\EQQ{
E_s(f) \le  \|\p_x^s f\|_{L^2}^2+\|f\|_{L^2}^2\Big(1+C_s\sum_{j=1}^{j_0}\|f\|_{H^4}^{s(p_j-1)}\Big) \le 4E_s(f).
}
\end{lem}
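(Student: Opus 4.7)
I would decompose $F_s(f,g)=A+B$ where
\[
A:=\tfrac12\|\p_x^s(f-g)\|_{L^2}^2+\tfrac12\|f-g\|_{L^2}^2\Big(1+C_s\sum_{j=1}^{j_0}\|f\|_{H^4}^{s(p_j-1)}\Big)
\]
is the ``main'' part and $B$ is the sum of the ${\Ga}^{(p_j)}$ correction terms appearing in \eqref{menergy1}. Since $F_s=A+B$, the claimed $F_s\le 2A\le 4F_s$ is equivalent to $|B|\le A/2$, which I would establish by choosing $C_s$ large.

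The core step is a pointwise multiplier bound. On the support of $M_{NR,j}$ the factors $M_H^{(p_j)}M_{NZ}^{(p_j)}$ give $|k_{(1,p_j-1)}|\ge 1$ and $\min\{|k_{p_j}|^{4/5},|k_{p_j+1}|^{4/5}\}\ge C\max_{l\le p_j-1}|k_l|$; together with $k_{p_j}+k_{p_j+1}=-k_{(1,p_j-1)}$ these force $|k_{p_j}|\sim|k_{p_j+1}|$ for $C$ large. Combining \eqref{eq_mnr} with Lemma \ref{oscillation_est} (so the $|k_{(1,p_j-1)}|$ factors cancel) yields
\[
\left|\frac{(ik_{p_j})^{s+1}(ik_{p_j+1})^{s+1}M_{NR,j}}{\Phi^{(p_j)}}\right|\lec|k_{p_j}|^{s-1}|k_{p_j+1}|^{s-1}\prod_{l=1}^{p_j-1}\LR{k_l}^3.
\]
Inserting this into $\Ga^{(p_j)}$, passing to absolute values of Fourier coefficients, and applying H\"older (two $L^2$ slots for the $(f-g)$'s and $p_j-1$ $L^\infty$ slots for the $f$'s) together with the Sobolev embedding $H^4(\T)\hookrightarrow W^{3,\infty}(\T)$ yields the multilinear estimate
\[
\left|{\Ga}^{(p_j)}\left(\frac{(ik_{p_j})^{s+1}(ik_{p_j+1})^{s+1}M_{NR,j}}{\Phi^{(p_j)}};f,\ldots,f,f-g,f-g\right)\right|\lec\|f\|_{H^4}^{p_j-1}\|f-g\|_{H^{s-1}}^2.
\]

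To absorb this into $A/2$ I would interpolate $\|f-g\|_{H^{s-1}}^2\le\|f-g\|_{H^s}^{2(s-1)/s}\|f-g\|_{L^2}^{2/s}$ (H\"older on the Fourier side) and apply Young's inequality with exponents $p=s/(s-1)$, $q=s$, giving, for any $\e>0$,
\[
\|f\|_{H^4}^{p_j-1}\|f-g\|_{H^{s-1}}^2\le\e\|f-g\|_{H^s}^2+C_\e\|f-g\|_{L^2}^2\|f\|_{H^4}^{s(p_j-1)};
\]
the exponent $q=s$ is exactly what produces the power $s(p_j-1)$ matching $F_s$. Summing over $1\le j\le j_0$, using $\|f-g\|_{H^s}^2\lec\|\p_x^s(f-g)\|_{L^2}^2+\|f-g\|_{L^2}^2$, and choosing $\e$ small (depending on $j_0$) and then $C_s$ larger than a multiple of $C_\e$, one concludes $|B|\le A/2$. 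The main obstacle is the pointwise multiplier estimate: one must verify that the $|k_{(1,p_j-1)}|$ factor in \eqref{eq_mnr} cancels the same factor in the lower bound of $|\Phi^{(p_j)}|$ from Lemma \ref{oscillation_est}, and that $|k_{p_j}|\sim|k_{p_j+1}|$ on $\supp M_H^{(p_j)}$ distributes the four gained derivatives evenly between the two copies of $f-g$; these two observations are what force the specific weight $\|f\|_{H^4}^{s(p_j-1)}$ built into the definition of $F_s$.
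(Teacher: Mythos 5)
Your proposal is correct and follows essentially the same route as the paper: reduce to absorbing the correction terms, bound the multiplier by $|k_{p_j}|^{s-1}|k_{p_j+1}|^{s-1}\prod_{l=1}^{p_j-1}\LR{k_l}^3$ via \eqref{eq_mnr} and Lemma \ref{oscillation_est}, apply Sobolev embedding to get $\|f\|_{H^4}^{p_j-1}\|f-g\|_{H^{s-1}}^2$, then interpolate between $H^s$ and $L^2$ and use Young's inequality with exponent $s$ to produce the weight $\|f\|_{H^4}^{s(p_j-1)}$. The paper phrases the interpolation via its Gagliardo--Nirenberg lemma (Lemma \ref{lemGN}) rather than H\"older on the Fourier side, but this is the identical argument.
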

\begin{proof}
By the definition \eqref{menergy1}, we only need to show
\EQQ{
I:=&\Big|\Ga^{(p_j)} \Big( \frac{(ik_{p_j})^{s+1}(ik_{p_j+1})^{s+1}M_{NR,j}}{\Phi^{(p_j)}}; f,\ldots,f,f-g,f-g\Big) \Big|\\
\le & \frac{1}{2j_0} \|\p_x^s(f-g)\|_{L^2}^2+\frac{C_s}{2}\|f-g\|_{L^2}^2\|f\|_{H^4}^{s(p_j-1)}.
}
By Lemma \ref{oscillation_est} and \eqref{eq_mnr}, we get
\EQQ{
\Big|\frac{(ik_{p_j})^{s+1}(ik_{p_j+1})^{s+1}M_{NR,j}}{\Phi^{(p_j)}}\Big|\lec |k_{p_j}|^{s-1}|k_{p_j+1}|^{s-1}
\prod_{l=1}^{p_j-1}\LR{k_l}^3.
}
Therefore, by the Sobolev inequality, we obtain
\EQQ{
I \lec \|\p_x^{s-1}(f-g)\|^2_{L^2}\|f\|_{H^4}^{p_j-1}.
}
It follows that $\|\p_x^{s-1}(f-g)\|_{L^2}\lec \|\p_x^s(f-g)\|^{1-1/s}_{L^2}\|f-g\|_{L^2}^{1/s}$ by Lemma \ref{lemGN}. Therefore, we obtain
\EQQ{
I \le C\|\p_x^s(f-g)\|^{2-2/s}_{L^2}\|f-g\|_{L^2}^{2/s}\|f\|_{H^4}^{p_j-1}\le \frac{1}{2j_0}\|\p_x^s(f-g)\|^2_{L^2}+\frac{C_s}{2}\|f-g\|_{L^2}^2\|f\|_{H^4}^{s(p_j-1)}.
}
\end{proof}
\begin{rem}
Though $E_s(f)$ is not monotonically increasing with respect to $s$, by Lemma \ref{sobolev_energy}, we have
\EQ{\label{monotone}
E_{s_1}(f) \le 4E_{s_2}(f)
}
for any $s_1 \le s_2$ and $f\in H^{s_2}(\T)$.
\end{rem}

\begin{prop}[L.W.P. for the regularized equation]\label{para_exist}
Let $\e \in (0,1]$ and $\vp\in H^s(\T)$ with $s \ge 4$. Then, there exist $T_\e \in (0,\infty]$ and the unique solution $u_\e(t) \in C([0,T_\e); H^s(\T))$ to \eqref{es1}--\eqref{es2} on $[0,T_\e)$ such that
(i) $\liminf_{t\to T_\e} \|u_\e(t)\|_{H^4}=\infty$ or (ii) $T_\e=\infty$ holds.
Moreover, we assume $\{\vp^j\}\subset H^s(\T)$ satisfies $\|\vp^j-\vp\|_{H^s}\to 0$ as $j\to \infty$.
Let $u_\e^j(t) \in C([0,T_\e);H^s(\T))$ be the solution to \eqref{es1} with initial data $\vp^j$.
Then, for any $T\in(0,T_\e]$, we have
$\sup_{t\in [0,T]} \|u^j(t)-u(t)\|_{H^s}\to 0$ as $j\to \infty$.
\end{prop}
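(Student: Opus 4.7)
The plan is to prove Proposition \ref{para_exist} via a contraction mapping argument in $C([0,T]; H^s(\T))$, exploiting the parabolic smoothing from the $\e\p_x^4$ term to absorb the three derivatives lost in $N$. Let $S_\e(t)$ denote the linear propagator for \eqref{es1}, which on the Fourier side acts as multiplication by $e^{-\e t k^4 - t\phi(k)}$; since $\phi(k)$ is purely imaginary, $|\widehat{S_\e(t) f}(k)| = e^{-\e t k^4}|\hat f(k)|$. The fundamental smoothing estimate is
\EQQ{
\|\p_x^\si S_\e(t) f\|_{H^s} \lec (\e t)^{-\si/4}\|f\|_{H^s}, \qquad \si\ge 0,\ s\in\R,\ t>0,
}
which follows from $|k|^\si e^{-\e t k^4}\lec (\e t)^{-\si/4}$.

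First I would recast \eqref{es1}--\eqref{es2} as the Duhamel integral equation
\EQQ{
u_\e(t) = S_\e(t)\vp + \int_0^t S_\e(t-t')N(u_\e(t'))\, dt' =: \Phi(u_\e)(t).
}
Since $s\ge 4 > 1/2 + 3$, $H^{s-3}(\T)$ is a Banach algebra, and expanding $N$ as a polynomial in $u,\ldots,\p_x^3 u$ gives
\EQQ{
\|N(u)\|_{H^{s-3}} \lec (1+\|u\|_{H^s})^{p_{max}},\qquad \|N(u)-N(v)\|_{H^{s-3}} \lec (1+\|u\|_{H^s}+\|v\|_{H^s})^{p_{max}-1}\|u-v\|_{H^s}.
}
Combining with the smoothing estimate at $\si=3$ and using $\int_0^t(t-t')^{-3/4}\,dt'\lec T^{1/4}$, one checks that $\Phi$ maps the ball $B_R:=\{u\in C([0,T];H^s):\|u\|_{C([0,T];H^s)}\le R\}$ with $R:=2\|\vp\|_{H^s}$ into itself and is a contraction, provided $T=T(\e,\|\vp\|_{H^s})$ is chosen small enough. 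This yields a unique local solution $u_\e\in C([0,T];H^s)$, and iterating gives a maximal time of existence $T_\e\in(0,\infty]$.

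The blow-up alternative stated in terms of the $H^4$-norm (rather than $H^s$) requires persistence of regularity. Testing \eqref{es1} against $(-\p_x^2)^s u_\e$ in $L^2$ yields
\EQQ{
\frac{1}{2}\frac{d}{dt}\|\p_x^s u_\e\|_{L^2}^2 + \e\|\p_x^{s+2} u_\e\|_{L^2}^2 = (\p_x^s N(u_\e),\p_x^s u_\e)_{L^2}.
}
The highest-derivative terms on the right lose at most $s+3$ derivatives on a single factor, but by repeated use of integration by parts, the Gagliardo--Nirenberg inequality (Lemma \ref{lemGN}), and Cauchy--Schwarz with weight $\e$, they are absorbed into $\e\|\p_x^{s+2} u_\e\|_{L^2}^2$ at the cost of a polynomial in $\|u_\e\|_{H^4}$ times $\|u_\e\|_{H^s}^2$. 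Gronwall's inequality then gives
\EQQ{
\|u_\e(t)\|_{H^s}^2 \le \|\vp\|_{H^s}^2\exp\Big(\int_0^t C(\|u_\e(t')\|_{H^4})\, dt'\Big),
}
so $\|u_\e(t)\|_{H^s}$ can blow up only if $\|u_\e(t)\|_{H^4}$ does. Continuous dependence follows by applying the same fixed-point scheme to the equation for $u_\e^j - u_\e$: on any $[0,T]\subset [0,T_\e)$ where both solutions are uniformly bounded in $H^s$, the Duhamel formula yields
\EQQ{
\sup_{t\in[0,T]}\|u_\e^j(t)-u_\e(t)\|_{H^s} \lec \|\vp^j-\vp\|_{H^s},
}
by iterating the local contraction estimate on finitely many short subintervals whose length is controlled by the $H^4$ bound on $u_\e$.

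The main obstacle is the persistence-of-regularity step: one must verify that every term produced by commuting $\p_x^s$ with the nonlinearity $N$, including those with $s+3$ derivatives on a single factor, can genuinely be absorbed into $\e\|\p_x^{s+2} u_\e\|_{L^2}^2$ with constants depending only on $\|u_\e\|_{H^4}$. The short-time contraction and continuous dependence are otherwise standard once the smoothing estimate for $S_\e(t)$ is in hand.
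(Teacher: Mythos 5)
Your core argument --- Duhamel's formula, the smoothing bound $\|\p_x^3 U_\e(t)f\|_{H^{s}}\lec (\e t)^{-3/4}\|f\|_{H^{s}}$ to pay for the three derivatives in $N$, and a Banach fixed point on a ball of radius $2\|\vp\|_{H^s}$ --- is exactly the paper's. The divergence is in how the blow-up alternative gets phrased in terms of the $H^4$ norm. The paper never uses an energy estimate here: it first runs the contraction in $H^4$, so that the maximal time $T_\e$ is automatically characterized by $\liminf_{t\to T_\e}\|u_\e(t)\|_{H^4}=\infty$, and then, for $s>4$, re-runs the integral-equation estimate using the tame Gagliardo--Nirenberg bound $\|N(u)\|_{H^{s-3}}\lec \|u\|_{H^s}\sum_{j}\|u\|_{H^4}^{p_j-1}$, which is \emph{linear} in the top norm; this makes the $H^s$ time step depend only on $\|\vp\|_{H^4}$, $\e$ and $s$, so $H^s$ regularity propagates to all of $[0,T_\e)$ by iteration.

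Your route instead contracts directly in $H^s$ (time depending on $\|\vp\|_{H^s}$) and then tries to prove persistence by testing against $\p_x^{2s}u_\e$ and absorbing into $\e\|\p_x^{s+2}u_\e\|_{L^2}^2$. This can be made to work, but it is precisely the step you have left unverified, and as stated the claim that the resulting constants depend only on $\|u_\e\|_{H^4}$ is not immediate: integrating by parts in the worst term $\int \p_x^{s+3}u_\e\, F(u_\e,\ldots,\p_x^3u_\e)\, \p_x^s u_\e\,dx$ produces coefficients containing $\p_x^4 u_\e$, which is \emph{not} in $L^\infty$ under an $H^4$ bound alone. To close the Gronwall argument you must interpolate that excess onto $\|\p_x^{s+2}u_\e\|_{L^2}$ (at the cost of an $\e^{-1}$ factor, which is harmless here) via Lemma \ref{lemGN} and Young's inequality. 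I would recommend replacing this step by the paper's tame-estimate argument: it yields the same conclusion with no commutator bookkeeping and is the reason the alternative can be stated with the $H^4$ norm. Your treatment of uniqueness and continuous dependence (iterating the contraction estimate on subintervals whose length is controlled by the $H^4$ bound) is the standard argument the paper invokes without detail.
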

\begin{proof}[Proof of Proposition \ref{para_exist}]
The result follows from the standard argument by the Banach fixed point theorem.
Therefore, we mention only the outline.
Fix $\e\in (0,1]$. First we consider the case $s=4$.
Put $U_\e(t)$ be the propagator of the linear part of \eqref{es1}, that is to say, $U_\e(t)=\F^{-1}\exp \big(-\e t k^4-it(\ga_0 k^5-\ga_1k^3+\ga_2 k)\big)\F_x$.
Then \eqref{es1}--\eqref{es2} is written into the integral equation:
\EQ{\label{integ}
u_\e=M(u_\e) \quad \text{where} \quad M(u_\e(t))=U_\e(t)\vp+\int_0^t U_\e(t-t')N(u_\e(t'))\, dt'.
}
For sufficiently small $T>0$, we will show that $M(u_\e)$ is a contraction map on
\EQQ{
X:=\big\{u_\e\in C([0,T]; H^4(\T)) : \|u_\e(t)\|_{X} \le 2\|\vp\|_{H^4} \big\},
}
where $\|u_\e\|_{X}:= \sup_{t\in [0,T]}\|u_\e(t)\|_{H^4}$.
First we show $M(u_\e)$ is a map from $X$ into $X$.
Obviously,
\EQQ{
\|M(u_\e(t))\|_{H^4} \le \|\vp\|_{H^4}+\int_0^t \|U_\e(t-t')N(u_\e(t'))\|_{H^4}\, dt'.
}
By Plancherel's identity, we have
\EQQ{
&\int_0^t \|U_\e(t-t')N(u_\e(t'))\|_{H^4}\, dt'\\
= C&\int_0^t \|\LR{k}^3 \exp\big(-\e (t-t') k^4-i(t-t')(\ga_0 k^5-\ga_1k^3+\ga_2 k)\big)\\
&\hspace*{4em} \times \LR{k} \F_x [N(u_\e(t'))](k) \|_{\ell^2_k}\, dt'.
}
By the Sobolev inequality, we obtain
\EQQ{
\|\LR{k} \F_x [N(u_\e(t'))](k) \|_{\ell^2_k}
 \lec \sum_{j=1}^{j_0}\|u_\e(t')\|_{H^4}^{p_j}.
}
Since
\EQQ{
&\sup_{k\in\Z}\LR{k}^3 \big|\exp\big(-\e (t-t') k^4-i(t-t')(\ga_0 k^5-\ga_1k^3+\ga_2 k)\big)\big|\\
 \lec& 1+\e^{-3/4}(t-t')^{-3/4},
}
we conclude
\EQQ{
\sup_{t\in [0,T]}\|M(u_\e(t))\|_{H^4} \le \|\vp\|_{H^4}+C(T+\e^{-3/4}T^{1/4})\sum_{j=1}^{j_0}\|\vp\|_{H^4}^{p_j}<2\|\vp\|_{H^4}
}
for sufficiently small $T=T(\|\vp\|_{H^4},\e)>0$ and any $u_\e\in X$.
By a similar argument, we can easily show that $\|M(u_{1,\e})-M(u_{2,\e})\|_X \le 2^{-1}\|u_{1,\e}-u_{2,\e}\|_{X}$ when $u_{1,\e},u_{2,\e}\in X$. Therefore, $M(u)$ is a contraction map and we obtain the fixed point in $X$, which is the solution to \eqref{integ} on $[0,T]$.
Since $\|u(T)\|_{H^4}$ is finite, we can repeat the argument above with initial data $u(T)$ to obtain the solution on $[T,T+T']$. Iterating this process, we can extend  the solution on $[0,T_\e)$ where $T_\e=\infty$ or $\liminf_{t\to T_\e} \|u(t)\|_{H^4}=\infty$ holds.

Next, we consider the case $s>4$. The solution obtained by the argument above satisfies
\EQ{\label{e25.5}
u_\e=U_\e(t)\vp+\int_0^t U(t-t')N(u_\e(t'))\, dt'.
}
By a similar way as above with Lemma \ref{lemGN}, we obtain
\EQQ{
\sup_{t\in [0,T]}\|u_\e(t)\|_{H^s} \le \|\vp\|_{H^s}+C(s)(T+\e^{-3/4}T^{1/4})\sup_{t\in [0,T]}\|u_\e(t)\|_{H^s}\sum_{j=1}^{j_0}\|\vp\|_{H^4}^{p_j-1}.
}
We take sufficiently small $T=T(\|\vp\|_{H^4},\e,s)>0$ such that
\EQQ{
C(s)(T+\e^{-3/4}T^{1/4})\sum_{j=1}^{j_0}\|\vp\|_{H^4}^{p_j-1}<1/2.
}
Then, we obtain $\sup_{t\in [0,T]}\|u_\e(t)\|_{H^s} <2\|\vp\|_{H^s}$.
By using \eqref{e25.5}, we also obtain $u_\e\in C([0,T];H^s(\T))$.
Since $\|u(T)\|_{H^s}$ is finite, we can repeat the argument above with initial data $u(T)$ to obtain $u_\e\in C([T,T+T'];H^s(\T))$.
We can iterate this process as far as $\|u_\e(t)\|_{H^4}< \infty$. Therefore, we conclude $u_\e \in C([0,T_\e);H^s(\T))$.
We omit the proof of the uniqueness and the continuous dependence because it follows from the standard argument.
\end{proof}


\section{refined energy estimate and existence of solution}
This section is devoted to show the following proposition, which is the main estimate in this paper.
\begin{prop}[energy inequality for the difference of two solutions]\label{thm_energy_est}
Let $s\in \N$, $s\ge 8$, $\e_1, \e_2\in [0,1]$ and $u\in L^\infty([0,T];H^s(\T))$ (resp. $v \in L^\infty([0,T];H^{s+4}(\T))$) be a solution to \eqref{es1} with $\e=\e_1$ (resp. $\e=\e_2$) on $[0,T]$.
Then, it follows that
\EQQ{
&\frac{d}{dt} F_{s}(u(t),v(t))
+P_N(u(t))\|\p_x^{s+1} (u(t)-v(t))\|_{L^2}^2\\
\lec&  F_s(u(t),v(t))(1+E_{8}(u(t))+E_{8}(v(t)))^{r(s)}\\
+&\big(F_8(u(t),v(t))(1+E_8(u(t))+E_8(v(t)))^{p_{\max}-2}
+|\e_1-\e_2|^2\big)\\
&\times\big(E_s(u(t))+E_{s+4}(v(t))\big),
}
on $[0,T]$, where $r(s):=s(p_{\max}-1)$ and the implicit constant depends on $s$ and does not depend on $u, v, \e_1,\e_2$, and $T$.
\end{prop}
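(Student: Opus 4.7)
Set $w := u-v$. Subtracting the regularized equations \eqref{es1} for $u$ and $v$ gives
\begin{equation*}
\p_t w + \e_1\p_x^4 w + L w = (N(u)-N(v)) + (\e_2-\e_1)\p_x^4 v, \quad L := \ga_0\p_x^5+\ga_1\p_x^3+\ga_2\p_x.
\end{equation*}
I differentiate $F_s(u,v)$ in $t$ piece by piece. For $\tfrac12\|\p_x^s w\|_{L^2}^2$, the skew-adjoint $L$ contributes zero, the parabolic term $-\e_1\p_x^4 w$ produces the non-positive $-\e_1\|\p_x^{s+2}w\|_{L^2}^2$ which I discard, and $(\e_2-\e_1)\p_x^4 v$ is dispatched by Cauchy--Schwarz into the $|\e_1-\e_2|^2 E_{s+4}(v)$ part of the right-hand side, leaving the principal nonlinear term $I_{\text{main}} := (\p_x^s w, \p_x^s(N(u)-N(v)))_{L^2}$. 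The piece $\tfrac12\|w\|_{L^2}^2(1+C_s\sum_j\|u\|_{H^4}^{s(p_j-1)})$, after substituting the equations for $\p_t w$ and $\p_t u$ into its time derivative, yields only lower-order contributions controlled by Lemmas \ref{lemGN} and \ref{sobolev_energy} that fit into the $F_s(u,v)(1+E_8(u)+E_8(v))^{r(s)}$ budget. Everything then reduces to analysing $I_{\text{main}}$ together with the time derivative of the correction term $\sum_j \Ga^{(p_j)}((ik_{p_j})^{s+1}(ik_{p_j+1})^{s+1}M_{NR,j}/\Phi^{(p_j)};u,\ldots,u,w,w)$ from \eqref{menergy1}.

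Expanding $N(u)-N(v)$ multilinearly and using \eqref{eq2.1}, $I_{\text{main}}$ becomes a finite sum of $\Ga^{(p_j)}(\la_j D_{a_j,b_j,c_j}(ik_{p_j+1})^{2s};\ldots)$-operators with two $w$-arguments (one in the last slot, one elsewhere). I partition each via the identity $1 = (1-M_H^{(p_j)}) + M_H^{(p_j)}M_{NZ}^{(p_j)} + M_H^{(p_j)}(1-M_{NZ}^{(p_j)})$. In the low-frequency region $1-M_H^{(p_j)}$, Lemma \ref{lem_MH} caps the two highest frequencies by $5/4$-powers of the others, and Lemma \ref{lemGN} then places the contribution in the first RHS budget. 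In the high-frequency non-resonant region $M_H^{(p_j)}M_{NZ}^{(p_j)}$, I symmetrize the asymmetric factor $(ik_{p_j+1})^{2s}$ between the two $w$-slots using repeated application of Lemma \ref{byparts_lem} (together with $ik_p+ik_{p+1}=-ik_{(1,p-1)}$), which extracts the leading symbol $(ik_{p_j})^{s+1}(ik_{p_j+1})^{s+1}M_{NR,j}$ --- the precise form of $M_{NR,j}$, with coefficients $(s-3/2)a_j$ and $b_j$, is dictated by this combinatorial reduction --- plus residual $\Ga$-forms of strictly smaller derivative weight. Meanwhile, when $\tfrac{d}{dt}$ of the correction term is computed and $\p_t$ is replaced by the linear-dispersive part $-\phi(D)$ in each of its $p_j+1$ slots, the identity $\sum_{l=1}^{p_j+1}\phi(k_l) = -\Phi^{(p_j)}$ on $Z_0^{(p_j)}$ clears the $\Phi^{(p_j)}$ denominator of $K_j$ and produces exactly the negative of the leading symbol above, effecting the normal-form cancellation.

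In the resonance region $M_H^{(p_j)}(1-M_{NZ}^{(p_j)})$ the constraint $k_{(1,p_j-1)}=0$ forces $k_{p_j}+k_{p_j+1}=0$, so the $(s-3/2)a_j(ik_{(1,p_j-1)})D_{a_j-1,b_j,c_j}$ summand of $M_{NR,j}$ vanishes and $(ik_{p_j})^{s+1}(ik_{p_j+1})^{s+1} = k_{p_j+1}^{2s+2}$. The constraint $k_{(1,p_j-1)}=0$ then factorises the multilinear sum as $\la_j b_j \widehat{(\p_x^3 u)^{a_j}(\p_x^2 u)^{b_j-1}(\p_x u)^{c_j}u^{d_j}}(0)\sum_k k^{2s+2}|\hat w(k)|^2 = P_{N_j}(u)\|\p_x^{s+1}w\|_{L^2}^2$; summing over $j$ and transposing yields the $+P_N(u)\|\p_x^{s+1}w\|_{L^2}^2$ on the left-hand side of the claim. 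All remaining contributions --- from the $\e\p_x^4$ and nonlinear terms in $\p_t u,\p_t w$ substituted into the time derivative of the correction, from the residues of the symmetrization reduction, and from the Lemma \ref{oscillation_est2} mismatch when $1/\Phi^{(p_j)}$ is symmetrized across the two $w$-slots --- are $\Ga$-forms in $u,v,w$ whose symbols are tamed using the quantitative lower bound $|\Phi^{(p_j)}|\gec|k_{p_j}|^4|k_{(1,p_j-1)}|$ of Lemma \ref{oscillation_est}, estimated by Lemma \ref{lem_ga4}, and then fed into the two RHS budgets via Lemma \ref{sobolev_energy}. The principal obstacle is the combinatorial book-keeping in the second paragraph: since for $p_j\ge 4$ no closed-form factorization of $\Phi^{(p_j)}=0$ exists (Remark \ref{oscillation_rem}), the cancellation must be engineered directly at the multiplier level, and every symmetrization residue has to be tracked and individually estimated.
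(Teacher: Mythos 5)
Your plan reproduces the paper's proof essentially step for step: the same splitting of $\frac{d}{dt}F_s$ into the $\|\p_x^s w\|_{L^2}^2$ piece, the weighted $\|w\|_{L^2}^2$ piece and the correction term (the paper's Propositions \ref{multi_est}, \ref{multi_est2}, \ref{multi_est3}); the same partition $1=(1-M_H^{(p_j)})+M_H^{(p_j)}M_{NZ}^{(p_j)}+M_H^{(p_j)}(1-M_{NZ}^{(p_j)})$; the same identification of $P_{N_j}(u)\|\p_x^{s+1}w\|_{L^2}^2$ with the resonant set $k_{(1,p_j-1)}=0$; the same normal-form cancellation of $(ik_{p_j})^{s+1}(ik_{p_j+1})^{s+1}M_{NR,j}$ against the $\Phi^{(p_j)}$-factor produced by differentiating the correction term along the linear flow; and the same use of Lemma \ref{oscillation_est2} to handle the symmetrization mismatch of $1/\Phi^{(p_j)}$ in the substituted-nonlinearity terms (the paper's Lemma \ref{Ki}).

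There is one concrete step that fails as written: you discard the dissipative term $-\e_1\|\p_x^{s+2}w\|_{L^2}^2$ at the outset, but you need it later. When $\p_t$ is replaced by the equation in a high-frequency slot of the correction term, the contribution of $-\e_1\p_x^4 w$ is a $\Ga^{(p_j)}$-form with symbol $(ik_{p_j})^{s+5}(ik_{p_j+1})^{s+1}M_{NR,j}/\Phi^{(p_j)}$, and the lower bound of Lemma \ref{oscillation_est} only reduces this symbol to size $|k_{p_j}|^{s+1}|k_{p_j+1}|^{s+1}\prod_{l=1}^{p_j-1}\LR{k_l}^3$. The resulting quantity is of order $\e_1\|u\|_{H^4}^{p_j-1}\|\p_x^{s+1}w\|_{L^2}^2$, which exceeds the budget $F_s(u,v)(1+E_8(u)+E_8(v))^{r(s)}$ by one derivative and cannot be hidden in $P_N(u)\|\p_x^{s+1}w\|_{L^2}^2$ either, since the size and sign of $P_N(u)$ are unrelated to $\e_1$. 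The paper keeps the dissipation and interpolates, $\e_1\|\p_x^{s+1}w\|_{L^2}^2\le \frac{\e_1}{8j_0}\|\p_x^{s+2}w\|_{L^2}^2+C\e_1\|\p_x^{s}w\|_{L^2}^2$, absorbing the first piece into the retained $-\e_1\|\p_x^{s+2}w\|_{L^2}^2$ (see the treatment of $J_{1,1}$ in Proposition \ref{multi_est2}). Retain the dissipation and your argument closes.
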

Before we proceed to the proof of Proposition \ref{thm_energy_est}, we prepare some lemmas and propositions.
\begin{lem}\label{lem_ga1}
Let $s \in \N\cup\{0\}$. Then, for any $f\in H^8(\T)$ and $g\in H^s(\T)$, it follows that
\EQ{\label{eq_ga1}
&\big|{\Ga}^{(p_j)}\big(\la_j a_j (ik_{p_j})^{s+2}(ik_{p_j+1})^{s+1}D_{a_j-1,b_j,c_j};f,\ldots,f,g,g\big)\\
&-{\Ga}^{(p_j)}\big( M_{1,*};f,\ldots,f,g,g\big)\big|\\
\lec& \|f\|_{H^4}^{p_j-2}\|f\|_{H^{8}}\|g\|_{H^s}^2,
}
where $M_{1,*}:=-\frac{1}{2}\la_ja_j(ik_{(1,p_j-1)})(ik_{p_j})^{s+1}(ik_{p_j+1})^{s+1}D_{a_j-1,b_j,c_j}M_{H}^{(p_j)}M_{NZ}^{(p_j)}$.
\end{lem}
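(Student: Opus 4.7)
The plan is to reduce the left-hand side of \eqref{eq_ga1}, via a symmetrization and a cutoff splitting, to a multilinear form supported off the high--high resonance region, which can then be bounded by H\"older and Sobolev. Since both $g$-slots occupy positions $p_j$ and $p_j+1$, and $D_{a_j-1,b_j,c_j}$ depends only on $k_1,\ldots,k_{p_j-d_j-1}$, that multiplier is trivially symmetric in $(k_{p_j},k_{p_j+1})$. Consequently the weight $(ik_{p_j})^{s+1}(ik_{p_j+1})^{s+1}D_{a_j-1,b_j,c_j}$ is symmetric in $(k_{p_j},k_{p_j+1})$, so peeling one $ik_{p_j}$ off $(ik_{p_j})^{s+2}$ and applying \eqref{sym1} yields
\EQQ{
&\Ga^{(p_j)}\big(\la_j a_j(ik_{p_j})^{s+2}(ik_{p_j+1})^{s+1}D_{a_j-1,b_j,c_j};f,\ldots,f,g,g\big)\\
&\ =-\frac12\Ga^{(p_j)}\big(\la_j a_j(ik_{(1,p_j-1)})(ik_{p_j})^{s+1}(ik_{p_j+1})^{s+1}D_{a_j-1,b_j,c_j};f,\ldots,f,g,g\big).
}
Hence the left-hand side of \eqref{eq_ga1} equals $|\Ga^{(p_j)}(\tilde M;f,\ldots,f,g,g)|$ with
\EQQ{
\tilde M:=-\frac12\la_j a_j(ik_{(1,p_j-1)})(ik_{p_j})^{s+1}(ik_{p_j+1})^{s+1}D_{a_j-1,b_j,c_j}\bigl(1-M_H^{(p_j)}M_{NZ}^{(p_j)}\bigr).
}

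Next, I would split $1-M_H^{(p_j)}M_{NZ}^{(p_j)}=(1-M_H^{(p_j)})+M_H^{(p_j)}(1-M_{NZ}^{(p_j)})$. The second piece is supported on $\{k_{(1,p_j-1)}=0\}$, where the explicit factor $ik_{(1,p_j-1)}$ in $\tilde M$ annihilates the contribution. Only the $(1-M_H^{(p_j)})$ portion survives, and on its support Lemma \ref{lem_MH} furnishes $\max\{|k_{p_j}|,|k_{p_j+1}|\}\lec\max_{1\le l\le p_j-1}|k_l|^{5/4}$. Writing $|k_{p_j}|^{s+1}|k_{p_j+1}|^{s+1}=|k_{p_j}|^s|k_{p_j+1}|^s|k_{p_j}||k_{p_j+1}|$ and combining $|k_{p_j}||k_{p_j+1}|\lec\max_l|k_l|^{5/2}$, $|k_{(1,p_j-1)}|\lec\max_l|k_l|$, and $|D_{a_j-1,b_j,c_j}|\lec\prod_{l=1}^{p_j-1}\LR{k_l}^3$ produces the pointwise bound
\EQQ{
|\tilde M|\lec|k_{p_j}|^s|k_{p_j+1}|^s\cdot\max_{1\le l\le p_j-1}\LR{k_l}^{7/2}\cdot\prod_{l=1}^{p_j-1}\LR{k_l}^3.
}

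Finally, absorbing $\max_l\LR{k_l}^{7/2}\le\sum_l\LR{k_l}^{7/2}$ into the weight on a chosen $f$-slot $l_0$, that slot bears $\LR{k_{l_0}}^{13/2}\le\LR{k_{l_0}}^7$ while the remaining $p_j-2$ $f$-slots bear $\LR{k_l}^3$. By Parseval, placing the two $g$'s in $L^2$ (yielding $\|g\|_{H^s}^2$), the distinguished $f$ in $L^\infty$ (Sobolev embedding gives $\|\p_x^7f\|_{L^\infty}\lec\|f\|_{H^8}$), and each remaining $f$ in $L^\infty$ ($\|\p_x^3f\|_{L^\infty}\lec\|f\|_{H^4}$), then summing over $l_0\in\{1,\ldots,p_j-1\}$, I recover the claimed bound $\|f\|_{H^4}^{p_j-2}\|f\|_{H^8}\|g\|_{H^s}^2$. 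The main obstacle is essentially bookkeeping: one has to verify the symmetry needed for \eqref{sym1} (which rests on $D_{a_j-1,b_j,c_j}$ not touching $k_{p_j},k_{p_j+1}$) and confirm that the $7/2+3=13/2$ derivatives landing on one $f$-slot fit inside $H^8$, which is the precise origin of the $\|f\|_{H^8}$ factor on the right.
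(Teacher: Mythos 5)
Your proposal is correct and follows essentially the same route as the paper: apply Lemma \ref{byparts_lem} (identity \eqref{sym1}, justified by the symmetry of $(ik_{p_j})^{s+1}(ik_{p_j+1})^{s+1}D_{a_j-1,b_j,c_j}$ in $k_{p_j},k_{p_j+1}$), note that the $1-M_{NZ}^{(p_j)}$ piece is killed by the factor $ik_{(1,p_j-1)}$, and bound the remaining $(1-M_H^{(p_j)})$ piece via Lemma \ref{lem_MH}, which yields exactly the paper's pointwise bound $|k_{p_j}|^s|k_{p_j+1}|^s\max_l|k_l|^{7/2}\prod_l\LR{k_l}^3$ and then the claim by the Sobolev inequality.
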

\begin{proof}
By Lemma \ref{byparts_lem}, we have
\EQQ{
&{\Ga}^{(p_j)}\big( \la_ja_j(ik_{p_j})^{s+2}(ik_{p_j+1})^{s+1}D_{a_j-1,b_j,c_j};f,\ldots,f,g,g\big)\\
=&-\frac{1}{2}{\Ga}^{(p_j)}\big( \la_ja_j(ik_{(1,p_j-1)})(ik_{p_j})^{s+1}(ik_{p_j+1})^{s+1}D_{a_j-1,b_j,c_j}M_{NZ}^{(p_j)};f,\ldots,f,g,g\big).
}
Therefore, the left-hand side of \eqref{eq_ga1} is bounded by
\EQ{\label{eq3.3.1}
\frac{1}{2}\big|{\Ga}^{(p_j)}\big( \la_ja_j(ik_{(1,p_j-1)})(ik_{p_j})^{s+1}(ik_{p_j+1})^{s+1}D_{a_j-1,b_j,c_j}(1-M_{H}^{(p_j)})M_{NZ}^{(p_j)};f,\ldots,f,g,g\big)\big|
}
By Lemma \ref{lem_MH}, we have
\EQQ{
&\big|(ik_{(1,p_j-1)})(ik_{p_j})^{s+1}(ik_{p_j+1})^{s+1}D_{a_j-1,b_j,c_j}(1-M_{H}^{(p_j)})M_{NZ}^{(p_j)}\big|\\
\lec &|k_{p_j}|^s|k_{p_j+1}|^s\max_{1\le l\le p_j-1} \{|k_l|^{7/2}\}\prod_{l=1}^{p_j-1}\LR{k_l}^3.
}
Therefore, \eqref{eq3.3.1} is bounded by $\|f\|_{H^4}^{p_j-2}\|f\|_{H^{8}}\|g\|_{H^s}^2$ by the Sobolev inequality.
\end{proof}
\begin{lem}\label{lem_ga23}
Let $s \in \N\cup\{0\}$. Put
\EQQS{
&M_{2,*}:=\la_jb_j(ik_{p_j})^{s+1}(ik_{p+1})^{s+1}D_{a_j,b_j-1,c_j}M_{H}^{(p_j)}M_{NZ}^{(p_j)},\\
&M_{4,*}:=\la_ja_j(ik_{(1,p_j-1)})(ik_{p_j})^{s+1}(ik_{p+1})^{s+1}D_{a_j-1,b_j,c_j}M_{H}^{(p_j)}M_{NZ}^{(p_j)}.
}
Then, it follows that
\EQ{\label{eq_ga2}
&\big|{\Ga}^{(p_j)}\big( \la_jb_j(ik_{p_j})^{s+1}(ik_{p_j+1})^{s+1}D_{a_j,b_j-1,c_j};f,\ldots,f,g,g\big)-{P}_{N_j}(f)\|\p_x^{s+1} g\|_{L^2}^2\\
&-{\Ga}^{(p_j)}\big(M_{2,*} ;f,\ldots,f,g,g\big)\big|\\
\lec& \|f\|_{H^4}^{p_j-2}\|f\|_{H^{7}}\|g\|_{H^s}^2
}
for any $f\in H^{7}(\T)$ and $g\in H^s(\T)$, and it follows that
\EQ{\label{eq_ga3}
&\big|{\Ga}^{(p_j)}\big( \la_ja_j(ik_{(1,p_j-1)})(ik_{p_j})^{s+1}(ik_{p_j+1})^{s+1}D_{a_j-1,b_j,c_j};f,\ldots,f,g,g\big)\\
&-{\Ga}^{(p_j)}\big(M_{4,*};f,\ldots,f,g,g\big)\big|\\
\lec& \|f\|_{H^4}^{p_j-2}\|f\|_{H^8}\|g\|_{H^s}^2
}
for any $f\in H^{8}(\T)$ and $g\in H^s(\T)$.
\end{lem}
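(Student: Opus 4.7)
The plan is to split the multiplier in the first argument of each $\Ga^{(p_j)}$ on the left-hand side of \eqref{eq_ga2} and \eqref{eq_ga3} by means of the algebraic identity
\begin{equation*}
1 = M_H^{(p_j)}M_{NZ}^{(p_j)}+(1-M_H^{(p_j)})+M_H^{(p_j)}(1-M_{NZ}^{(p_j)}),
\end{equation*}
so that one piece gives the main term ($M_{2,*}$ or $M_{4,*}$), one piece produces (or vanishes against) the parabolic coefficient $P_{N_j}(f)\|\p_x^{s+1}g\|_{L^2}^2$, and the remainder lives on $\{1-M_H^{(p_j)}=1\}$, where Lemma \ref{lem_MH} supplies the crucial bound $|k_{p_j}||k_{p_j+1}|\lesssim \max_{1\le l\le p_j-1}|k_l|^{5/2}$.

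For \eqref{eq_ga2}, I would first subtract $\Ga^{(p_j)}(M_{2,*};\ldots)$ and split the resulting multiplier $(1-M_H^{(p_j)}M_{NZ}^{(p_j)})$ as above. On the support of $1-M_{NZ}^{(p_j)}$ the constraint $k_{(1,p_j-1)}=0$ forces $k_{p_j+1}=-k_{p_j}$, so $(ik_{p_j})^{s+1}(ik_{p_j+1})^{s+1}$ collapses to the non-negative quantity $k_{p_j}^{2(s+1)}$ and $\ha g(k_{p_j})\ha g(k_{p_j+1})$ to $|\ha g(k_{p_j})|^2$ because $g$ is real; since $D_{a_j,b_j-1,c_j}$ depends only on $k_1,\ldots,k_{p_j-1}$, the resulting double sum factors. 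Writing $M_H^{(p_j)}=1-(1-M_H^{(p_j)})$ inside this resonant piece then produces exactly $P_{N_j}(f)\|\p_x^{s+1}g\|_{L^2}^2$ (cancelling the subtracted term) together with a remainder again supported on $1-M_H^{(p_j)}$. Thus the full quantity to estimate becomes a sum of two $\Ga^{(p_j)}$-expressions, both on $\{1-M_H^{(p_j)}=1\}$. On this support Lemma \ref{lem_MH} bounds the multiplier by $|k_{p_j}|^s|k_{p_j+1}|^s\max_l\LR{k_l}^{5/2}\prod_{l=1}^{p_j-1}\LR{k_l}^3$, after which placing the two $g$-factors in $L^2$ and the $p_j-1$ $f$-factors in $L^\infty$ via the one-dimensional embedding $H^{1/2+\epsilon}\hookrightarrow L^\infty$ gives the stated bound, the single high-frequency $f$-factor costing $\|f\|_{H^{11/2+1/2+\epsilon}}\le \|f\|_{H^7}$.

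The proof of \eqref{eq_ga3} would then be completely parallel but shorter: the extra prefactor $ik_{(1,p_j-1)}$ annihilates the $M_H^{(p_j)}(1-M_{NZ}^{(p_j)})$ piece outright, so no $P_{N_j}$-type term appears and only the $(1-M_H^{(p_j)})$ contribution needs to be estimated. Combining $|k_{(1,p_j-1)}|\lesssim \max_l|k_l|$ with Lemma \ref{lem_MH} produces the pointwise bound $|k_{p_j}|^s|k_{p_j+1}|^s\max_l\LR{k_l}^{7/2}\prod_{l=1}^{p_j-1}\LR{k_l}^3$, after which the same H\"older--Sobolev estimate demands one further derivative on the top $f$-factor, namely $\|f\|_{H^{13/2+1/2+\epsilon}}\le \|f\|_{H^8}$, matching the claim.

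The main obstacle, and the only genuinely interesting step, is the first estimate: extracting $P_{N_j}(f)\|\p_x^{s+1}g\|_{L^2}^2$ as the \emph{only} non-perturbative resonant contribution requires recognizing that the combined restrictions $k_{(1,p_j-1)}=0$ and the consequent $k_{p_j+1}=-k_{p_j}$ convert $(ik_{p_j})^{s+1}(ik_{p_j+1})^{s+1}$ into the positive quantity $k_{p_j}^{2(s+1)}$, which is exactly what will play the role of the parabolic-smoothing coefficient in the energy inequality \eqref{energyineq}; every other piece of the decomposition is a non-resonant error handled by Lemma \ref{lem_MH}.
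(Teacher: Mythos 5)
Your proposal is correct and follows essentially the same route as the paper: the same identification of the resonant piece $\{k_{(1,p_j-1)}=0\}$ with $P_{N_j}(f)\|\p_x^{s+1}g\|_{L^2}^2$ (using $k_{p_j+1}=-k_{p_j}$ and the factorization of the sum), and the same reduction of the error to a multiplier supported on $\supp(1-M_H^{(p_j)})$, estimated via Lemma \ref{lem_MH} and Sobolev embedding with the identical $H^7$/$H^8$ bookkeeping. Your cutoff identity is a trivially rearranged version of the paper's $1-(1-M_{NZ}^{(p_j)})-M_H^{(p_j)}M_{NZ}^{(p_j)}=(1-M_H^{(p_j)})M_{NZ}^{(p_j)}$ and collapses to the same error term after your extra step $M_H^{(p_j)}=1-(1-M_H^{(p_j)})$.
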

\begin{proof}
Since
\EQQ{
{P}_{N_j}(f)\|\p_x^{s+1} g\|_{L^2}^2={\Ga}^{(p_j)}\big( \la_jb_j(ik_{p_j})^{s+1}(ik_{p_j+1})^{s+1}D_{a_j,b_j-1,c_j}(1-M_{NZ}^{(p_j)});f,\ldots,f,g,g\big)
}
and
\EQQ{
1-(1-M_{NZ}^{(p_j)})-M_{H}^{(p_j)}M_{NZ}^{(p_j)}=(1-M_{H}^{(p_j)})M_{NZ}^{(p_j)},
}
the left-hand side of \eqref{eq_ga2} is equal to
\EQ{\label{e3.3.2}
\big|{\Ga}^{(p_j)}\big( \la_jb_j(ik_{p_j})^{s+1}(ik_{p_j+1})^{s+1}D_{a_j,b_j-1,c_j}(1-M_{H}^{(p_j)})M_{NZ}^{(p_j)};f,\ldots,f,g,g\big)\big|
} 
By Lemma \ref{lem_MH}, we have
\EQQ{
&|(ik_{p_j})^{s+1}(ik_{p_j+1})^{s+1}D_{a_j,b_j-1,c_j}(1-M_{H}^{(p_j)})M_{NZ}^{(p_j)}|\\
\lec &|k_{p_j}|^s|k_{p_j+1}|^s\max_{1\le l\le p_j-1} \{|k_l|^{5/2}\}\prod_{l=1}^{p_j-1}\LR{k_l}^3.
}
Therefore, \eqref{e3.3.2} is bounded by $\|f\|_{H^4}^{p_j-2}\|f\|_{H^{7}}\|g\|_{H^s}^2$ by the Sobolev inequality.
In the same manner, by Lemma \ref{lem_MH}, we have
\EQQ{
&\big|(ik_{(1,p_j-1)})(ik_{p_j})^{s+1}(ik_{p+1})^{s+1}D_{a_j-1,b_j,c_j}(1-M_{H}M_{NZ})\big|\\
=&\big|(ik_{(1,p_j-1)})(ik_{p_j})^{s+1}(ik_{p+1})^{s+1}D_{a_j-1,b_j,c_j}(1-M_{H})M_{NZ}\big|\\
\lec& |k_{p_j}|^s|k_{p_j+1}|^s\max_{1\le l\le p_j-1} \{|k_l|^{7/2}\}\prod_{l=1}^{p_j-1}\LR{k_l}^3.
}
Therefore, by the Sobolev inequality, we get \eqref{eq_ga3} .
\end{proof}
\begin{lem}\label{Ki}
Let $s \in \N, s\ge 7$ and
put
\EQQ{
K_i:={\Ga}^{(p_j)}\Big(\frac{(ik_{p_j+1})^{s+1}M_{NR,j}}{\Phi^{(p_j)}}; f,\ldots,f,\p_x^{s+1}\big(N_i(f)-N_i(g)\big),f-g\Big).
}
Then, for any $f\in H^s(\T)\cap H^8(\T)$ and $g\in H^{s+4}(\T)$, we have
\EQQ{
|K_i| \lec& \|f-g\|_{H^s}^2(\|f\|_{H^8}+\|g\|_{H^8})^{p_i+p_j-2}\\
+&\|f-g\|_{H^s}\|f-g\|_{H^8}(\|f\|_{H^s}+\|g\|_{H^s})(\|f\|_{H^8}+\|g\|_{H^8})^{p_i+p_j-3}\\
+&\|f-g\|_{H^{s-3}}\|f-g\|_{H^4}\|g\|_{H^{s+4}}(\|f\|_{H^4}+\|g\|_{H^4})^{p_i+p_j-3}.
}
\end{lem}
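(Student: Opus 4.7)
The plan is to combine a multiplier bound coming from the oscillation estimate with a telescoping expansion of $N_i(f)-N_i(g)$. First, by Lemma \ref{oscillation_est} together with Lemma \ref{lem_MH} (which gives $|k_{p_j}|\sim|k_{p_j+1}|$ on $\supp M_H^{(p_j)}$), the factor $|k_{(1,p_j-1)}|$ in the bound $|M_{NR,j}|\lec|k_{(1,p_j-1)}|\prod_{l=1}^{p_j-1}\langle k_l\rangle^3$ coming from \eqref{eq_mnr} cancels the same factor in $|\Phi^{(p_j)}|\gec|k_{p_j}|^4|k_{(1,p_j-1)}|$, yielding
\EQQ{
\left|\frac{(ik_{p_j+1})^{s+1}M_{NR,j}}{\Phi^{(p_j)}}\right|\lec \langle k_{p_j+1}\rangle^{s-3}\prod_{l=1}^{p_j-1}\langle k_l\rangle^3
}
on $\supp M_{NR,j}$. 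The net effect is that the $s+1$ derivatives apparently assigned to the outer $(f-g)$-slot are reduced to $s-3$, with four ``free'' derivatives effectively deposited back onto the inner slot via $1/\Phi^{(p_j)}$.

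Next, I expand $N_i(f)-N_i(g)$ by substituting $\partial_x^{m_l}f=\partial_x^{m_l}g+\partial_x^{m_l}(f-g)$ in each of the $p_i$ factors of $N_i(f)$ and subtracting $N_i(g)$:
\EQQ{
N_i(f)-N_i(g)=\sum_{\emptyset\neq S\subseteq\{1,\ldots,p_i\}}\lambda_i\prod_{l\in S}\partial_x^{m_l}(f-g)\prod_{l\notin S}\partial_x^{m_l}g,
}
so that only $g$ and $f-g$ appear in the inner factors (never $f$, whose regularity is only $H^s$). Applying $\partial_x^{s+1}$ by the Leibniz rule and invoking the multiplier bound above together with Parseval's identity, $K_i$ is controlled by sums of expressions of the form $\bigl|\int\langle\partial_x\rangle^{s-3}(f-g)\cdot\prod_{l=1}^{p_j-1}\langle\partial_x\rangle^3 f\cdot \mathcal{T}\,dx\bigr|$, where $\mathcal{T}$ denotes a single Leibniz summand of $\partial_x^{s+1}(N_i(f)-N_i(g))$. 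Placing the $p_j-1$ outer $f$'s in $L^\infty$ via $H^4\hookrightarrow W^{3,\infty}$ (Lemma \ref{lemGN}) and applying Cauchy--Schwarz between $\langle\partial_x\rangle^{s-3}(f-g)$ and $\mathcal{T}$, the task reduces to estimating $\|\mathcal{T}\|_{L^2}$ by H\"older, with the factor carrying the largest derivative burden placed in $L^2$ and the remaining ones in $L^\infty$.

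Three cases organize the bound. In case (a), a $g$-factor of $\mathcal{T}$ absorbs essentially all of the $s+1$ Leibniz derivatives (together with an inherent $m_l\le 3$): its $L^2$ norm is controlled by $\|g\|_{H^{s+4}}$, and multiplying by $\|f-g\|_{H^{s-3}}$ and the explicit telescoping factor $\|f-g\|_{H^4}$ produces the third term. In case (b), an inner $(f-g)$-factor absorbs most of the derivatives: a single integration by parts on $\T$ shifts a bounded number of derivatives from the inner $(f-g)$ onto the outer $(f-g)$-slot (upgrading its exponent from $s-3$ to $s$) and onto adjacent $g$-factors (upgrading them from $H^4$ to $H^8$), yielding the first term. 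In case (c), intermediate distributions, including the $|S|\ge 2$ terms in which two $(f-g)$-factors share the derivatives, produce the second term.

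The main obstacle is case (b). A direct H\"older bound on $\|\mathcal{T}\|_{L^2}$ would demand $\|f-g\|_{H^{s+4}}$, which is unavailable under the hypothesis $f-g\in H^s$. The resolution is that once $1/\Phi^{(p_j)}$ has absorbed four derivatives, the combined derivative count on the two $(f-g)$-factors is at most $(s-3)+(s+1+m_l)\le 2s+1$, essentially saturating the budget that $f-g\in H^s$ allows across two factors; the single integration by parts then moves the surplus onto a $g$-factor, which can afford up to $H^{s+4}$. Tracking this redistribution uniformly over all Leibniz multi-indices $\beta$ and all subsets $S$ is the technical core of the argument, but every resulting summand ultimately falls into one of the categories (a)--(c) above.
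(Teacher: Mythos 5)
Your setup is sound and matches the paper's strategy in outline: the bound $\bigl|(ik_{p_j+1})^{s+1}M_{NR,j}/\Phi^{(p_j)}\bigr|\lec |k_{p_j+1}|^{s-3}\prod_{l=1}^{p_j-1}\LR{k_l}^3$ is exactly \eqref{ee3.51}, the telescoping of $N_i(f)-N_i(g)$ is how the paper produces its pieces $K_{i,2},K_{i,3},K_{i,4}$, and your cases (a) and (c) correspond to terms the paper also disposes of by Lemma \ref{lem_ga4} and the Sobolev inequality (one correction: in (a)/(c) you pair the surplus against a $g$-factor in $H^{s+4}$, but in the top-order piece the remaining slots are all $f$, which lives only in $H^8$, consistent with the first line of the stated bound).

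The genuine gap is your case (b), which is precisely the paper's $K_{i,1}$: the term $\Ga^{(p_i+p_j-1)}\bigl((ik_{p_i+p_j-1})^3M_{i,1};f,\ldots,f,f-g,f-g\bigr)$ whose multiplier carries $|k_{p_i+p_j-1}|^{s+4}|k_{p_i+p_j}|^{s-3}$, i.e.\ a total of $2s+1$ derivatives on two $H^s$ functions. You propose to remove the surplus by ``a single integration by parts on $\T$,'' but the quantity is not an integral of a pointwise product: the weight $\ti{M}_{NR,i}/\ti{\Phi}_i^{(p_i+p_j-1)}$ couples all frequencies, so integration by parts must be implemented as the Fourier-side identity of Lemma \ref{byparts_lem}, and that identity requires the multiplier to be \emph{symmetric} in the two $(f-g)$ variables $k_{p_i+p_j-1}$ and $k_{p_i+p_j}$. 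It is not, because $\ti{\Phi}_i$ and $\ti{M}_{NR,i}$ are evaluated at $k_{(p_j,p_i+p_j-1)}$ and $k_{p_i+p_j}$ asymmetrically. The paper's resolution is to split $M_{i,1}=M_{i,11}+M_{i,12}+M_{i,13}+M_{i,14}$ via the symmetrization operator $S(k_{p_i+p_j-1},k_{p_i+p_j})$: the symmetric part $M_{i,14}$ is handled by Lemma \ref{byparts_lem}; the antisymmetric remainder $M_{i,12}$ requires the separate cancellation estimate of Lemma \ref{oscillation_est2} for $(1-S)[1/\ti{\Phi}_i]$, which shows the difference of the two reciprocals gains a full power of the high frequency; and the pieces $M_{i,11}$, $M_{i,13}$ are controlled because their supports force a low frequency to be large. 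None of this machinery appears in your argument, and without it the surplus derivative in case (b) cannot be moved off the two $(f-g)$ slots; a direct H\"older/Cauchy--Schwarz bound there genuinely requires $\|f-g\|_{H^{s+1}}$, which is not available.
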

\begin{proof}
Since
\EQQ{
\p_x^{s+1}(N_i(f)-N_i(g))
=&\la_ia_i(\p_x^3 f)^{a_i-1}(\p_x^2 f)^{b_i}(\p_x f)^{c_i}(f)^{d_i}\big(\p_x^{s+4}(f-g)\big)\\
+&\la_ia_i(\p_x^3 f)^{a_i-1}(\p_x^2 f)^{b_i}(\p_x f)^{c_i}(f)^{d_i}(\p_x^{s+4}g)\\
-&\la_ia_i(\p_x^3 g)^{a_i-1}(\p_x^2 g)^{b_i}(\p_x g)^{c_i}(g)^{d_i}(\p_x^{s+4}g)\\
+&\p_x^{s+1}N_i(f)-\la_ia_i(\p_x^{s+4}f)(\p_x^3 f)^{a_i-1}(\p_x^2 f)^{b_i}(\p_x f)^{c_i}(f)^{d_i}\\
-&\p_x^{s+1}N_i(g)+\la_ia_i(\p_x^{s+4}g)(\p_x^3 g)^{a_i-1}(\p_x^2 g)^{b_i}(\p_x g)^{c_i}(g)^{d_i},
}
we have
\EQQ{
K_i=&\Ga^{(p_i+p_j-1)}\big((ik_{p_i+p_j-1})^3M_{i,1} ;f,\ldots,f,f-g,f-g\Big)\\
+&\Big(\Ga^{(p_i+p_j-1)}\big((ik_{p_i+p_j-1})^3M_{i,1} ;f,\ldots,f,g,f-g\big)\\
&-\Ga^{(p_i+p_j-1)}\big((ik_{p_i+p_j-1})^3M_{i,1} ;f,\ldots,f,g,\ldots,g,g,f-g\big)\Big)\\
+&\Big(\Ga^{(p_i+p_j-1)}\big(M_{H,*}M_{i,2} ;f,\ldots,f,f-g\big)\\
&-\Ga^{(p_i+p_j-1)}\big(M_{H,*}M_{i,2} ;f,\ldots,f,g,\ldots,g,f-g\big)\Big)\\
+&\Big(\Ga^{(p_i+p_j-1)}\big((1-M_{H,*})M_{i,2} ;f,\ldots,f,f-g\big)\\
&-\Ga^{(p_i+p_j-1)}\big((1-M_{H,*})M_{i,2} ;f,\ldots,f,g,\ldots,g,f-g\big)\Big)\\
=:&K_{i,1}+K_{i,2}+K_{i,3}+K_{i,4}
}
where the multipliers on $\Z_0^{(p_i+p_j-1)}$ are defined by
\EQQS{
&M_{i,1}:=\frac{\la_ia_i(ik_{p_i+p_j-1})^{s+1}(ik_{p_i+p_j})^{s+1}\ti{M}_{NR,i}\ti{D}_{a_i-1,b_i,c_i}}{\ti{\Phi}_i^{(p_i+p_j-1)}},\\
&M_{i,2}:=\frac{\la_i(ik_{p_i+p_j})^{s+1}\ti{M}_{NR,i}\ti{D}_{a_i,b_i,c_i}}{\ti{\Phi}_i^{(p_i+p_j-1)}}\times \Big((ik_{(p_j,p_i+p_j-1)})^{s+1}-\sum_{l=p_j}^{p_j+a_i-1} (ik_l)^{s+1}\Big),\\
&\ti{\Phi}_i^{(p_i+p_j-1)}:=\Phi^{(p_j)}(k_1,\ldots,k_{p_j-1},k_{(p_j,p_i+p_j-1)},k_{p_i+p_j}),\\
&\ti{M}_{NR,i}:=M_{NR,j}(k_1,\ldots,k_{p_j-1},k_{(p_j,p_i+p_j-1)},k_{p_i+p_j}),\\
&\ti{D}_{a_i,b_i,c_i}:=\prod_{l=p_j}^{p_j+a_i-1}(ik_l)^3\prod_{l=p_j+a_i}^{p_j+a_i+b_i-1}(ik_l)^2\prod_{l=p_j+a_i+b_i}^{p_j+a_i+b_i+c_i-1}(ik_l)\\
&\hspace*{3.3em}=D_{a_i,b_i,c_i}(k_{p_j},\ldots,k_{p_j+p_i-1}),\\
&M_{H,*}:=
\begin{cases}
1, \,\, \text{when}\,\, |k_{(p_j,p_i+p_j-1)}| \ll \max_{p_j\le l\le p_i+p_j-1} |k_l|,\\
0, \,\, \text{otherwise}.
\end{cases}
}
Note that
\EQS{
&|\ti{D}_{a_i-1,b_i,c_i}|\lec \prod_{l=p_j}^{p_i+p_j-2}\LR{k_l}^3,\label{ee3.50}\\
&\big|\ti{D}_{a_i,b_i,c_i} \big((ik_{(p_j,p_i+p_j-1)})^{s+1}-\sum_{l=p_j}^{p_j+a_i-1} (ik_l)^{s+1}\big)\big| \lec\max_{p_j\le l\le p_i+p_j-1} |k_l|^{s-1}\prod_{l=p_j}^{p_i+p_j-1} \LR{k_l}^4.\label{ee3.501}
}
By Lemma \ref{oscillation_est} and \eqref{eq_mnr},
\EQ{\label{ee3.51}
\big|\ti{M}_{NR,i}/\ti{\Phi}_i^{(p_i+p_j-1)}\big| \lec |k_{p_i+p_j}|^{-4}\prod_{l=1}^{p_j-1}\LR{k_l}^3
\sim |k_{(p_j,p_i+p_j-1)}|^{-4}\prod_{l=1}^{p_j-1}\LR{k_l}^3.
}
By \eqref{ee3.50} and \eqref{ee3.51},
\EQQ{
|(ik_{p_i+p_j-1})^3M_{i,1}|\lec |k_{p_i+p_j-1}|^{s+4}|k_{p_i+p_j}|^{s-3}
\prod_{l=1}^{p_i+p_j-2}\LR{k_l}^3.
}
Thus, by $(ii)$ of Lemma \ref{lem_ga4} with $p=p_i+p_j-1, m=p_j, d=s+4, c=s-3, b=0, a=3$, we obtain
\EQQ{
|K_{i,2}|\lec \|f-g\|_{H^{s-3}}\|g\|_{H^{s+4}}\|f-g\|_{H^4}\big(\|f\|_{H^4}+\|g\|_{H^4} \big)^{p_i+p_j-3}.
}
When $\vec{k}^{(p_i+p_j-1)}\in \supp M_{H,*}$,
there exist $l_1, l_2$ such that $p_j\le l_1 <l_2\le p_i+p_j-1$ and
\EQ{\label{ee3.52}
|k_{l_1}|\sim|k_{l_2}|\sim 
\max_{p_j\le l\le p_i+p_j-1} |k_l|.
}
By \eqref{ee3.501}, \eqref{ee3.51} and \eqref{ee3.52}, we get
\EQQ{
|M_{H,*}M_{i,2}|\lec |k_{p_i+p_j}|^{s-3}\LR{k_{l_1}}^{s-4}\LR{k_{l_2}}^3
\prod_{1\le l\le p_i+p_j-1}\LR{k_l}^4.
}
Therefore, by $(i)$ of Lemma \ref{lem_ga4} with $p=p_i+p_j-1, m=p_j,
c=s-3, b=s-7, a=7$, we obtain
\EQQ{
|K_{i,3}|\lec &\|f-g\|_{H^{s-3}}\big(\|f\|_{H^8}+\|g\|_{H^8} \big)^{p_i+p_j-3}\\
\times&\big(\|f-g\|_{H^s}(\|f\|_{H^8}+\|g\|_{H^8} )+\|f-g\|_{H^8}(\|f\|_{H^s}+\|g\|_{H^s}) \big).
}
By \eqref{ee3.501} and \eqref{ee3.51},
\EQQ{
|(1-M_{H,*})M_{i,2}|\lec |k_{p_i+p_j}|^{s}\max_{p_j\le l\le p_i+p_j-1}|k_l|^{s-4}\prod_{l=1}^{p_i+p_j-1}\LR{k_l}^4.
}
Thus, by Lemma \ref{lem_ga4} $(i)$ with $p=p_i+p_j-1, m=p_j, c=s, b=s-4, a=4$, it follows that
\EQQ{
|K_{i,4}|\lec& \|f-g\|_{H^s}(\|f\|_{H^5}+\|g\|_{H^5})^{p_i+p_j-3}\\
&\times\{ \|f-g\|_{H^s}(\|f\|_{H^5}+\|g\|_{H^5})+\|f-g\|_{H^5}(\|f\|_{H^s}+\|g\|_{H^s})\}.
}
Finally, we estimate $K_{i,1}$. It still has one derivative loss because
$(ik_{p_i+p_j-1})^3M_{i,1}$ includes $(ik_{p_i+p_j-1})^{s+4}(ik_{p_i+p_j})^{s+1}$ and \eqref{ee3.51} is not enough to cancel $|k_{p_i+p_j-1}|^{4}|k_{p_i+p_j}|$.
We would like to recover it by symmetry and Lemma \ref{byparts_lem}.
However, $M_{i,1}$ is not symmetric with respect to $k_{p_i+p_j-1}$ and $k_{p_i+p_j}$
because of $\ti{\Phi}_i^{(p_i+p_j-1)}$ and $\ti{M}_{NR,i}$.
To avoid this difficulty, we introduce
\EQQ{
\ti{M}_{NR,i}^{sym}&:=
T(k_{p_i+p_j-1},k_{p_i+p_j})\big[M_{H}^{(p_j)}(k_1,\ldots,k_{p_j-1},k_{(p_j,p_i+p_j-1)},k_{p_i+p_j})\big]\ti{M}_{NR,i}\\
&=M_{H}^{(p_j)}(k_1,\ldots,k_{p_j-1},k_{(p_j,p_i+p_j-2)}+k_{p_i+p_j},k_{p_i+p_j-1})\ti{M}_{NR,i}.
}
and decompose $M_{i,1}$ into four parts:
\EQQ{
M_{i,1}=&\la_i a_i(ik_{p_i+p_j-1})^{s+1}(ik_{p_i+p_j})^{s+1}\ti{D}_{a_i-1,b_i,c_i}\Big( (\ti{M}_{NR,i}-\ti{M}_{NR,i}^{sym})/\ti{\Phi}_i^{(p_i+p_j-1)}\\
&+(1-S(k_{p_i+p_j-1},k_{p_i+p_j}))\big[\ti{M}_{NR,i}^{sym}/\ti{\Phi}_i^{(p_i+p_j-1)}\big]\\
&+S(k_{p_i+p_j-1},k_{p_i+p_j})\big[M_{H,*}\ti{M}_{NR,i}^{sym}/\ti{\Phi}_i^{(p_i+p_j-1)}\big]\\
&+S(k_{p_i+p_j-1},k_{p_i+p_j})\big[(1-M_{H,*})\ti{M}_{NR,i}^{sym}/\ti{\Phi}_i^{(p_i+p_j-1)}\big]\Big)\\
=:&M_{i,11}+M_{i,12}+M_{i,13}+M_{i,14}.
}
Note that $\ti{M}_{NR,i}^{sym}$ is symmetric with respect to $k_{p_i+p_j-1}$ and $k_{p_i+p_j}$.
Recall that $S(k_{p_i+p_j-1},k_{p_i+p_j})$ is the symmetrization operator defined by \eqref{sym}.
Thus, $M_{i,14}$ is symmetric with respect to $k_{p_i+p_j-1}$ and $k_{p_i+p_j}$.
By Lemma \ref{byparts_lem}, we have
\EQ{\label{M14a}
&\Ga^{(p_i+p_j-1)}\big((ik_{p_i+p_j-1})^3M_{i,14} ;f,\ldots,f,f-g,f-g\Big)\\
=&-\frac{1}{2}\Ga^{(p_i+p_j-1)}\big((ik_{(1,p_i+p_j-2)})^3M_{i,14} ;f,\ldots,f,f-g,f-g\Big)\\
+&\frac{3}{2}\Ga^{(p_i+p_j-1)}\big((ik_{(1,p_i+p_j-2)})(ik_{p_i+p_j-1})(ik_{p_i+p_j})M_{i,14} ;f,\ldots,f,f-g,f-g\Big).
}
It follows that $|k_{p_i+p_j-1}| \lec |k_{(p_j,p_i+p_j-1)}|$ when $\vec{k}^{(p_i+p_j-1)}\in \supp 1-M_{H,*}$.
Thus, by \eqref{ee3.51},
\EQQ{
|S(k_{p_i+p_j-1},k_{p_i+p_j})\big[(1-M_{H,*})\ti{M}_{NR,i}^{sym}/\ti{\Phi}_i^{(p_i+p_j-1)}\big]|
\lec |k_{p_i+p_j-1}|^{-2}|k_{p_i+p_j}|^{-2}\prod_{l=1}^{p_j-1}\LR{k_l}^3.
}
Therefore, we obtain
\EQQ{
&|(ik_{(1,p_i+p_j-2)})(ik_{p_i+p_j-1})(ik_{p_i+p_j})M_{i,14}|\\
\lec &|k_{p_i+p_j-1}|^{s}|k_{p_i+p_j}|^{s}\max_{1\le l \le p_i+p_j-2} |k_l| \prod_{l=1}^{p_i+p_j-2} \LR{k_l}^3.
}
In the same manner,
\EQQ{
|(ik_{(1,p_i+p_j-2)})^3M_{i,14}|
\lec |k_{p_i+p_j-1}|^{s-1}|k_{p_i+p_j}|^{s-1}\max_{1\le l\le p_i+p_j-2} |k_l|^{3}\prod_{l=1}^{p_i+p_j-2} \LR{k_l}^3.
}
Thus, applying the Sobolev inequality for \eqref{M14a}, we conclude
\EQ{\label{M14b}
&\Big|\Ga^{(p_i+p_j-1)}\big((ik_{p_i+p_j-1})^3M_{i,14} ;f,\ldots,f,f-g,f-g\Big)\Big|\\
\lec &\|f-g\|^2_{H^s}\|f\|_{H^7}^{p_i+p_j-2}.
}
By \eqref{ee3.51} and \eqref{ee3.52},
\EQQ{
\big|M_{H,*}\ti{M}_{NR,i}^{sym}/\ti{\Phi}_i^{(p_i+p_j-1)}\big|
\lec |k_{l_1}|^4|k_{p_i+p_j-1}|^{-4}|k_{p_i+p_j}|^{-4}\prod_{l=1}^{p_j-1} \LR{k_l}^3,
}
where $p_j\le l_1 \le p_i+p_j-2$. Thus, by \eqref{ee3.50},
\EQQ{
|(ik_{p_i+p_j-1})^3M_{i,13}|
\lec |k_{p_i+p_j-1}|^{s}|k_{p_i+p_j}|^{s-3}|k_{l_1}|^4\prod_{l=1}^{p_i+p_j-2} \LR{k_l}^3.
}
Thus, by the Sobolev inequality, we obtain
\EQ{\label{M13}
&\Big|\Ga^{(p_i+p_j-1)}\big((ik_{p_i+p_j-1})^3M_{i,13} ;f,\ldots,f,f-g,f-g\Big)\Big|\\
\lec & \|f-g\|^2_{H^s}\|f\|_{H^8}^{p_i+p_j-2}.
}
By Lemma \ref{oscillation_est2} and \eqref{eq_mnr},
\EQQ{
&\big|(1-S(k_{p_i+p_j-1},k_{p_i+p_j}))
\big[\ti{M}_{NR,i}^{sym}/\ti{\Phi}_i^{(p_i+p_j-1)}\big]\big|\\
=&\big|\ti{M}_{NR,i}^{sym}(1-S(k_{p_i+p_j-1},k_{p_i+p_j}))
\big[1/\ti{\Phi}_i^{(p_i+p_j-1)}\big]\big|\\
\lec & |k_{(1,p_j-1)}|\prod_{l=1}^{p_j-1}\LR{k_l}^3 |k_{p_i+p_j-1}|^{-4} |k_{p_i+p_j}|^{-1}\max_{1\le l\le p_i+p_j-2} |k_l|^2.
}
Thus, by \eqref{ee3.50}
\EQQ{
|(ik_{p_i+p_j-1})^3M_{i,12}|
\lec |k_{p_i+p_j-1}|^{s}|k_{p_i+p_j}|^{s}\max_{1\le l\le p_i+p_j-2} |k_l|^3\prod_{l=1}^{p_i+p_j-2} \LR{k_l}^3.
}
Therefore, by the Sobolev inequality, we obtain
\EQ{\label{M12}
&\Big|\Ga^{(p_i+p_j-1)}\big((ik_{p_i+p_j-1})^3M_{i,12} ;f,\ldots,f,f-g,f-g\Big)\Big|\\
\lec & \|f-g\|^2_{H^s}\|f\|_{H^7}^{p_i+p_j-2}.
}
Since
\EQQ{
\ti{M}_{NR,i}-\ti{M}_{NR,i}^{sym}=(1-M_{H}^{(p_j)}\big(k_1,\ldots,k_{p_j-1},k_{(p_j,p_i+p_j-2)}+k_{p_i+p_j},k_{p_i+p_j-1})\big)\ti{M}_{NR,i},
}
there exists $l_3$ such that $1\le l_3\le p_j-1$, $|k_{l_3}|^5 \gec |k_{p_i+p_j-1}|^4$ if $\vec{k}^{(p_i+p_j-1)}\in \supp \big(\ti{M}_{NR,i}-\ti{M}_{NR,i}^{sym}\Big)$.
Thus, by \eqref{ee3.50} and \eqref{ee3.51}
\EQQ{
|(ik_{p_i+p_j-1})^3M_{i,11}|
\lec |k_{l_3}|^5|k_{p_i+p_j-1}|^{s}|k_{p_i+p_j}|^{s-3}\prod_{l=1}^{ p_i+p_j-2} \LR{k_l}^3.
}
By the Sobolev inequality, we obtain
\EQ{\label{M11}
&\Big|\Ga^{(p_i+p_j-1)}\big((ik_{p_i+p_j-1})^3M_{i,11} ;f,\ldots,f,f-g,f-g\Big)\Big|\\
\lec & \|f-g\|^2_{H^s}\|f\|_{H^8}^{p_i+p_j-2}.
}
Collecting \eqref{M14b}--\eqref{M11}, we conclude
$|K_{i,1}|\lec \|f-g\|_{H^s}^2\|f\|_{H^8}^{p_i+p_j-2}$.
\end{proof}

\begin{prop}\label{multi_est}
Let $s\in \N, s\ge 7$.
Then, it follows that
\EQQ{
&\big| (-1)^s\big(N_j(f)-N_j(g), \p_x^{2s}(f-g)\big)_{L^2}
+P_{N_j}(f)\|\p_x^{s+1} (f-g)\|_{L^2}^2\\
&+\Ga^{(p_j)}((ik_{p_j})^{s+1}(ik_{p_j+1})^{s+1}M_{NR,j};f,\ldots,f,f-g,f-g)\big|\\
\lec& \|f-g\|^2_{H^s}(\|f\|_{H^8}+\|g\|_{H^8})^{p_j-1}\\
&+\|f-g\|^2_{H^7}(\|f\|_{H^s}+\|g\|_{H^s})^2(\|f\|_{H^7}+\|g\|_{H^7})^{p_j-3}\\
&+\|f-g\|^2_{H^7}\|g\|^2_{H^{s+3}}(\|f\|_{H^7}+\|g\|_{H^7})^{p_j-3}\\
}
for any $f\in H^s(\T) \cap H^{8}(\T)$ and $g\in H^{s+3}(\T)$.
\end{prop}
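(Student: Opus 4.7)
The plan is to expand $N_j(f)-N_j(g)$ by telescoping factor-by-factor through the monomial $N_j(u)=\lambda_j(\p_x^3 u)^{a_j}(\p_x^2 u)^{b_j}(\p_x u)^{c_j}u^{d_j}$, producing four brackets labelled by $\ell\in\{0,1,2,3\}$, each proportional to $\p_x^\ell(f-g)$ with coefficients that are polynomials in $f,g$ and their lower-order derivatives. Pairing with $\p_x^{2s}(f-g)$ in $L^2$ and converting to $\Gamma^{(p_j)}$-form via \eqref{eq2.1} gives, for each bracket, a multiplier of shape $\lambda_j\,\text{(binomial coefficient)}\,D_{*}(ik_{p_j})^\ell(ik_{p_j+1})^{2s}$, with the last two $\Gamma$-slots being $(f-g,f-g)$ and the middle slots a mixture of $f$ and $g$. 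The first step is to separate the all-$f$ leading contribution from the sub-leading contributions (those retaining $g$-factors in middle slots after the telescoping); applying Lemma \ref{lem_ga4} (ii) with indices dictated by the multiplier weight absorbs the sub-leading contributions into the second and third lines of the RHS, the two lines being distinguished by how the $2s$ derivatives carried by $(ik_{p_j+1})^{2s}$ are redistributed between $(f-g)$-slots and middle $g$-slots.

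For the $\ell=0,1$ leading pieces (from the $d_j$- and $c_j$-brackets), I would symmetrize the multiplier in $(k_{p_j},k_{p_j+1})$ and invoke Lemma \ref{byparts_lem}\eqref{sym1}, which replaces $(ik_{p_j})$ by $-\tfrac{1}{2}ik_{(1,p_j-1)}$ and thereby transfers one derivative from an $(f-g)$-slot to a middle slot; the Sobolev embedding together with Lemma \ref{lemGN} then bounds these contributions by $\|f-g\|_{H^s}^2(\|f\|_{H^8}+\|g\|_{H^8})^{p_j-1}$, the first line of the RHS.

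The core of the proof is the $\ell=2,3$ leading pieces. Symmetrizing $(ik_{p_j})^2(ik_{p_j+1})^{2s}$ and $(ik_{p_j})^3(ik_{p_j+1})^{2s}$ in $(k_{p_j},k_{p_j+1})$ and expanding $ik_{p_j+1}=-ik_{p_j}-ik_{(1,p_j-1)}$ yields the algebraic identities
\EQQ{
\tfrac{1}{2}\bigl[(ik_{p_j})^2(ik_{p_j+1})^{2s}+(ik_{p_j})^{2s}(ik_{p_j+1})^2\bigr]=(-1)^{s+1}(ik_{p_j})^{s+1}(ik_{p_j+1})^{s+1}+R_2,
}
\EQQ{
\tfrac{1}{2}\bigl[(ik_{p_j})^3(ik_{p_j+1})^{2s}+(ik_{p_j})^{2s}(ik_{p_j+1})^3\bigr]=-2(-1)^{s+1}(s-\tfrac{3}{2})(ik_{p_j})^{s+2}(ik_{p_j+1})^{s+1}+R_3,
}
where the symmetric remainders $R_2,R_3$ vanish to second order in $ik_{(1,p_j-1)}$ and are therefore controlled within the first line of the RHS by Sobolev. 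Applying Lemma \ref{lem_ga23}\eqref{eq_ga2} to the $\ell=2$ main piece produces $P_{N_j}(f)\|\p_x^{s+1}(f-g)\|_{L^2}^2$ from the zero-sum region $k_{(1,p_j-1)}=0$ (i.e.\ $1-M_{NZ}^{(p_j)}$) together with $\Gamma^{(p_j)}(M_{2,*};\ldots)$ from the $M_H^{(p_j)}M_{NZ}^{(p_j)}$-region, while Lemma \ref{lem_ga1} converts the $\ell=3$ main piece into $-2(s-\tfrac{3}{2})\Gamma^{(p_j)}(M_{1,*};\ldots)$ after a further symmetrization extracting $ik_{(1,p_j-1)}$. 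Tracking constants, the sum of these main contributions is precisely $\Gamma^{(p_j)}((ik_{p_j})^{s+1}(ik_{p_j+1})^{s+1}M_{NR,j};\ldots)$, so the signed sum on the LHS of the proposition collapses to the union of (i) the remainders $R_2,R_3$, (ii) the $(1-M_H^{(p_j)}M_{NZ}^{(p_j)})$-errors from Lemmas \ref{lem_ga1} and \ref{lem_ga23}, and (iii) the Lemma \ref{lem_ga4} (ii)-errors from the sub-leading terms, all fitting into the three lines of the RHS.

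The principal obstacle is the careful bookkeeping of constants and signs—most notably, identifying the factor $s-\tfrac{3}{2}$ of $M_{NR,j}$ as the first-order (in $ik_{(1,p_j-1)}$) coefficient of the $\ell=3$ symmetrization above—and the matching of each error source to the correct RHS line with its prescribed Sobolev indices (in particular, tracing the configuration that yields the $\|f-g\|_{H^7}^2\|g\|_{H^{s+3}}^2$ pattern in the third line as arising from the sub-leading expansion where two middle slots are converted to $g$ with high-derivative weight transferred via Lemma \ref{lem_ga4} (ii)).
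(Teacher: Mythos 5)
Your overall architecture (telescope the nonlinearity, symmetrize in $(k_{p_j},k_{p_j+1})$, extract $P_{N_j}$ from the zero-sum set $k_{(1,p_j-1)}=0$, and match the surviving main terms against $(ik_{p_j})^{s+1}(ik_{p_j+1})^{s+1}M_{NR,j}$, including the coefficient $s-\tfrac32$) is the right shape, but the central step — how the $2s$ derivatives on the paired slot get redistributed — has a genuine gap. You redistribute by writing $b=-a-c$ with $a=ik_{p_j}$, $b=ik_{p_j+1}$, $c=ik_{(1,p_j-1)}$ and expanding in powers of $c$, claiming the remainders $R_2,R_3$ are ``controlled within the first line of the RHS by Sobolev'' because they vanish to second order in $c$. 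That is only a statement about the variety $c=0$: as exact polynomials, $R_\ell=c^2\tilde Q_\ell(a,b)$ with $\tilde Q_\ell$ symmetric of degree $2s+\ell-2$, and $\tilde Q_\ell$ contains monomials concentrating essentially all $2s$ powers on a single variable. In the frequency region where $k_{p_j}=O(1)$ and a middle frequency is comparable to $k_{p_j+1}$ (so $|c|\sim|b|$ is huge), $|R_2|\sim|c|^2|b|^{2s}$ and the full $2s$-derivative loss on an $(f-g)$ slot survives; no Sobolev embedding recovers this, and Lemma \ref{lem_MH} cannot trade $|k_{p_j+1}|^{2s}$ down either. Your expansion is only meaningful on $\supp M_H^{(p_j)}$, but you have no mechanism for the complementary region while the multiplier is still unbalanced as $a^\ell b^{2s}$: Lemmas \ref{lem_ga1} and \ref{lem_ga23}, which you cite for the $(1-M_H^{(p_j)}M_{NZ}^{(p_j)})$ errors, only apply after the weights have been balanced to $(s+2,s+1)$ or $(s+1,s+1)$. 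The same problem hits your sub-leading (mixed $f,g$) terms: you feed them to Lemma \ref{lem_ga4}~(ii) while the multiplier still carries $(ik_{p_j+1})^{2s}$, which forces $c=2s$ there and yields $\|f-g\|_{H^{2s}}$, not the $\|f-g\|_{H^s}\|g\|_{H^{s+3}}$ pattern; the promised ``redistribution onto middle $g$-slots'' is exactly the missing mechanism.

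The paper closes this gap with a different, global redistribution: it writes $(-1)^s(ik_{p_j+1})^{s-1}=-(ik_{(1,p_j)})^{s-1}$ and performs an exact multinomial-type expansion $-\sum_{n=1}^7 S_n$ organized by which factor carries the high frequency (zeroth order $S_1,S_2,S_3$, first order $S_4,S_5$, second order $S_6$, remainder $S_7$ with the pointwise bound $|S_7|\lesssim\max_l|k_l|^{s-7}\prod\langle k_l\rangle^{(\cdot)}$ valid on all of $\Z_0^{(p_j)}$), then swaps the singled-out index into slot $p_j$. This moves $s-1$ derivatives off the paired slot onto all other slots \emph{before} any frequency localization, so every subsequent multiplier is balanced; the cutoff $M_H^{(p_j)}$ and Lemma \ref{lem_MH} enter only afterwards, and the mixed terms then carry weight $s+3$ on the $g$-slot, producing the $\|g\|_{H^{s+3}}$ factor. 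A smaller point: your $\ell=3$ identity is false as written (at $c=0$ the left side vanishes but $-2(-1)^{s+1}(s-\tfrac32)a^{s+2}b^{s+1}$ does not); it becomes correct only after replacing $a^{s+2}b^{s+1}$ by its symmetrization $-\tfrac12ca^{s+1}b^{s+1}$, which is legitimate inside $\Gamma^{(p_j)}(\cdot;\ldots,f-g,f-g)$ but must be said, since otherwise the ``remainder'' is $O(1)$ rather than $O(c^2)$.
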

\begin{proof}
By \eqref{eq2.1},
\EQ{\label{E3.1}
&(-1)^s\big(({N}_j(f)-N_j(g)), \p_x^{2s}(f-g)\big)_{L^2}\\
=&(-1)^s{\Ga}^{(p_j)}\big(\la_j(ik_{p_j+1})^{2s}D_{a_j,b_j,c_j};f,\ldots,f,f-g \big)\\
-&(-1)^s{\Ga}^{(p_j)}\big(\la_j(ik_{p_j+1})^{2s}D_{a_j,b_j,c_j};g,\ldots,g,f-g \big).
}
For $\vec{k}^{(p_j)}\in \Z_0^{(p_j)}$, it follows that
\EQ{\label{E3.2}
(-1)^s(ik_{p_j+1})^{s-1}=-(ik_{(1,p_j)})^{s-1}
=-\sum_{n=1}^{7} S_n,
}
where
\EQQ{
&S_1:=\sum_{l=1}^{a_j} (ik_l)^{s-1},\,\,
S_2:=\sum_{l=a_j+1}^{a_j+b_j} (ik_l)^{s-1},\,\,
S_3:=\sum_{l=a_j+b_j+1}^{a_j+b_j+c_j} (ik_l)^{s-1},\\
&S_4:=(s-1)\sum_{l=1}^{a_j} (ik_l)^{s-2}(ik_{(1,p_j)}-ik_l),\,\,\, S_5:=(s-1)\sum_{l=a_j+1}^{a_j+b_j} (ik_l)^{s-2}(ik_{(1,p_j)}-ik_l),\\
&S_6:=\frac{(s-1)(s-2)}{2}\sum_{l=1}^{a_j} (ik_l)^{s-3}(ik_{(1,p_j)}-ik_l)^2,\,\, S_7:=(ik_{(1,p_j)})^{s-1}-\sum_{n=1}^{6} S_n.
}
From \eqref{E3.1} and \eqref{E3.2}, we have
\EQ{\label{E3.3}
(-1)^s\big(N_j(f)-N_j(g), \p_x^{2s}(f-g)\big)_{L^2}=-\sum_{n=1}^{7} \big(I_n(f)-I_n(g)\big)
}
where
\EQQ{
I_n(h):={\Ga}^{(p_j)}\big( \la_j(ik_{p_j+1})^{s+1} D_{a_j,b_j,c_j}S_n;h,\ldots,h,f-g\big).
}
Since
\EQQ{
|S_7|\lec \max_{1\le l\le p_j} |k_l|^{s-7}\prod_{l=1}^{a_j}\LR{k_l}^3\prod_{l=a_j+1}^{a_j+b_j}\LR{k_l}^4
\prod_{l=a_j+b_j+1}^{a_j+b_j+c_j}\LR{k_l}^5\prod_{l=a_j+b_j+c_j}^{p_j}\LR{k_l}^6,
}
and $|k_{p_j+1}|=|k_{(1,p_j)}|\lec \max_{1\le l\le p_j} |k_l|$,
we have
\EQQ{
|(ik_{p_j+1})^{s+1} D_{a_j,b_j,c_j}S_7|\lec |k_{p_j+1}|^s\max_{1\le l \le p_j}|k_l|^{s-6}\prod_{l=1}^{p_j}\LR{k_l}^6.
}
Therefore, by $(ii)$ of Lemma \ref{lem_ga4} with $p=p_j, m=1, c=s, b=s-6, a=6$,
we have
\EQ{\label{E3.4}
|I_7(f)-I_7(g)|
&\lec \|f-g\|_{H^s}(\|f\|_{H^7}+\|g\|_{H^7})^{p_j-2}\\
\times&\{\|f-g\|_{H^s}(\|f\|_{H^7}+\|g\|_{H^7})+\|f-g\|_{H^7}
(\|f\|_{H^s}+\|g\|_{H^s})\}\\
&\lec \|f-g\|_{H^s}^2(\|f\|_{H^7}+\|g\|_{H^7})^{p_j-1}\\
+ &\|f-g\|^2_{H^7}(\|f\|_{H^s}
+ \|g\|_{H^s})^2(\|f\|_{H^7}+\|g\|_{H^7})^{p_j-3}.
}
Since $D_{a_j,b_j,c_j}S_1$ is symmetric with respect to $k_1,\ldots,k_{a_j}$, we have
\EQQ{
I_1(h)={\Ga}^{(p_j)}\big( \la_ja_j(ik_1)^{s-1}(ik_{p_j+1})^{s+1}D_{a_j,b_j,c_j};h,\ldots,h,f-g\big).
}
Changing the role of $k_1$ and $k_{p_j}$, we have
\EQQ{
I_1(h)={\Ga}^{(p_j)}\big(M_1 ;h,\ldots,h,f-g\big), \qquad
M_1:=\la_ja_j(ik_{p_j})^{s+2}(ik_{p_j+1})^{s+1} D_{a_j-1,b_j,c_j}.
}
Note that $D_{a_j,b_j,c_j}S_2$ and $D_{a_j,b_j,c_j}S_5$ are symmetric with respect to $k_{a_j+1},\ldots,k_{a_j+b_j}$,
$D_{a_j,b_j,c_j}S_4$ and $D_{a_j,b_j,c_j}S_6$ are symmetric with respect to $k_{1},\ldots,k_{a_j}$ and
$D_{a_j,b_j,c_j}S_3$ is symmetric with respect to $k_{a_j+b_j+1},\ldots,k_{a_j+b_j+c_j}$.
Therefore, in the same manner as $I_1(h)$, we obtain
\EQQ{
I_n(h)={\Ga}^{(p_j)}\big(M_n ;h,\ldots,h,f-g\big),
}
for $2\le n\le 6$, where
\EQQ{
&M_2:=\la_jb_j(ik_{p_j})^{s+1}(ik_{p_j+1})^{s+1} D_{a_j,b_j-1,c_j}\\
&M_3:=\la_jc_j(ik_{p_j})^{s}(ik_{p_j+1})^{s+1} D_{a_j,b_j,c_j-1},\\
&M_4:=(s-1)\la_ja_j(ik_{(1,p_j-1)})(ik_{p_j})^{s+1}(ik_{p_j+1})^{s+1} D_{a_j-1,b_j,c_j},\\
&M_5:=(s-1)\la_jb_j(ik_{(1,p_j-1)})(ik_{p_j})^{s}(ik_{p_j+1})^{s+1} D_{a_j,b_j-1,c_j},\\
&M_6:=(s-1)(s-2)\la_ja_j(ik_{(1,p_j-1)})^2(ik_{p_j})^{s}(ik_{p_j+1})^{s+1} D_{a_j-1,b_j,c_j}/2.
}
Moreover, for $1\le n\le 6$, we have
\EQ{\label{E3.5}
I_n(f)-I_n(g)&={\Ga}^{(p_j)}\big(M_n ;f,\ldots,f,f-g,f-g\big)\\
&+{\Ga}^{(p_j)}\big(M_n ;f,\ldots,f,g,f-g\big)-{\Ga}^{(p_j)}\big(M_n ;g,\ldots,g,g,f-g\big).
}
Since
\EQQ{
|k_{p_j+1}|=|k_{(1,p_j)}|\lec \LR{k_{p_j}}\max_{1\le l\le p_j-1} |k_l|,
}
it follows that
\EQQ{
|M_n| \lec \LR{k_{p_j+1}}^{s}\LR{k_{p_j}}^{s+3}\max_{1\le l\le p_j-1}|k_l|^3\prod_{l=1}^{p_j-1}\LR{k_{l}}^3
}
for $1\le n\le 6$.
Thus, by (ii) of Lemma \ref{lem_ga4} with $p=p_j, m=1, d=s, c=s+3, b=3, a=3$, we obtain
\EQ{\label{E3.6}
&\Big|{\Ga}^{(p_j)}\big(M_n ;f,\ldots,f,g,f-g\big)-{\Ga}^{(p_j)}\big(M_n ;g,\ldots,g,g,f-g\big)\Big|\\
\lec &\|f-g\|_{H^{s}}\|g\|_{H^{s+3}} (\|f\|_{H^4}+\|g\|_{H^4})^{p_j-3}\\
\times&\{\|f-g\|_{H^7}(\|f\|_{H^4}+\|g\|_{H^4})+\|f-g\|_{H^4}(\|f\|_{H^7}+\|g\|_{H^7}) \}\\
\lec &\|f-g\|^2_{H^{s}}(\|f\|_{H^4}+\|g\|_{H^4})^{p_j-1}
+ \|f-g\|^2_{H^7}\|g\|^2_{H^{s+3}}(\|f\|_{H^7}+\|g\|_{H^7})^{p_j-3}
}
for $1\le n\le 6$.
By Lemmas \ref{lem_ga1}, we obtain
\EQ{\label{E3.7}
&\Big|{\Ga}^{(p_j)}\big(M_1 ;f,\ldots,f,f-g,f-g\big)-{\Ga}^{(p_j)}\big( M_{1,*} ;f,\ldots,f,f-g,f-g\big)\Big|\\
&\lec \|f\|_{H^4}^{p_j-2}\|f\|_{H^{8}}\|f-g\|^2_{H^s}.
}
By Lemmas \ref{lem_ga23}, we obtain
\EQ{\label{E3.8}
&\Big|{\Ga}^{(p_j)}\big(M_2 ;f,\ldots,f,f-g,f-g\big)-P_{N_j}(f)\|\p_x^{s+1}(f-g)\|^2_{L^2}\\
&-{\Ga}^{(p_j)}\big( M_{2,*} ;f,\ldots,f,f-g,f-g\big)\Big|\\
+&\Big|{\Ga}^{(p_j)}\big(M_4 ;f,\ldots,f,f-g,f-g\big)\\
&-{\Ga}^{(p_j)}\big( (s-1)M_{4,*} ;f,\ldots,f,f-g,f-g\big)\Big|\\
\lec & \|f\|_{H^4}^{p_j-2}\|f\|_{H^{8}}\|f-g\|^2_{H^s}.
}
For $n=3,5,6$, by Lemma \ref{byparts_lem}, 
\EQQ{
{\Ga}^{(p_j)}\big(M_n ;f,\ldots,f,f-g,f-g\big)={\Ga}^{(p_j)}\big(M_{n,*} ;f,\ldots,f,f-g,f-g\big)
}
where
\EQQ{
&M_{3,*}:=-\la_jc_j(ik_{p_j})^{s}(ik_{p_j+1})^{s}(ik_{(1,p_j-1)}) D_{a_j,b_j,c_j-1}/2,\\
&M_{5,*}:=-(s-1)\la_jb_j(ik_{(1,p_j-1)})^2(ik_{p_j})^{s}(ik_{p_j+1})^{s} D_{a_j,b_j-1,c_j}/2,\\
&M_{6,*}:=-(s-1)(s-2)\la_ja_j(ik_{(1,p_j-1)})^3(ik_{p_j})^{s}(ik_{p_j+1})^{s} D_{a_j-1,b_j,c_j}/4.
}
Since
\EQQ{
|M_{n,*}|\lec \LR{k_{p_j}}^s\LR{k_{p_j+1}}^s\max_{1\le l\le p_j-1}|k_l|^3\prod_{l=1}^{p_j-1}\LR{k_l}^3
}
for $n=3,5,6$, by the Sobolev inequality, we obtain
\EQ{\label{E3.9}
\Big|{\Ga}^{(p_j)}\big(M_n ;f,\ldots,f,f-g,f-g\big)\Big|
  \lec & \|f-g\|^2_{H^s}\|f\|_{H^7}\|f\|_{H^4}^{p_j-2}
}
for $n=3,5,6$.
Note that $(ik_{p_j})^{s+1}(ik_{p_j+1})^{s+1}M_{NR,j}=M_{1,*}+M_{2,*}+(s-1)M_{4,*}$. Therefore, collecting \eqref{E3.3}--\eqref{E3.9}, we conclude the desired result.
\end{proof}
\begin{prop}\label{multi_est2}
Let $s\in \N, s \ge 7$, $\e_1, \e_2\in [0,1]$ and $u\in L^\infty([0,T];H^s(\T)\cap H^8(\T))$ (resp. $v\in L^\infty ([0,T]; H^{s+4}(\T))$) be a solution to \eqref{es1} with $\e=\e_1$ (resp. $\e=\e_2$) on $[0,T]$.
Then, we have
\EQ{\label{est2}
& \Big|\frac{d}{dt} {\Ga}^{(p_j)}\Big(\frac{(ik_{p_j})^{s+1}(ik_{p_j+1})^{s+1}M_{NR,j}}{\Phi^{(p_j)}};u,\ldots,u,u-v,u-v \Big)\\
& \ \ \ \ \ \ -{\Ga}^{(p_j)}\big((ik_{p_j})^{s+1}(ik_{p_j+1})^{s+1}M_{NR,j};u,\ldots,u,u-v,u-v \big)\Big|\\
\le & \frac{\e_1}{4j_0}\|\p_x^{s+2} (u-v)\|_{L^2}^2+C|\e_1-\e_2|^2\|v\|^2_{H^{s+2}}\\
&+C\|u-v\|^2_{H^s}(1+\|u\|_{H^8}+\|v\|_{H^8})^{2(p_{\max}-1)}\\
&+C\|u-v\|^2_{H^8}(\|u\|_{H^s}+\|v\|_{H^s})^2(1+\|u\|_{H^8}+\|v\|_{H^8})^{2(p_{\max}-2)}\\
&+C\|u-v\|^2_{H^4}\|v\|^2_{H^{s+4}}(1+\|u\|_{H^8}+\|v\|_{H^8})^{2(p_{\max}-2)}.
}
\end{prop}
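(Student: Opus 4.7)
The plan is a normal form reduction. First I would apply $d/dt$ to the functional by multilinearity, producing $p_j+1$ terms in which $\p_t$ hits one argument. Substituting \eqref{es1}, the time derivative of each $u$ is $\p_t u=-\e_1\p_x^4 u-(\ga_0\p_x^5+\ga_1\p_x^3+\ga_2\p_x)u+N(u)$, and the time derivative of each $u-v$ is $\p_t(u-v)=-\e_1\p_x^4(u-v)+(\e_2-\e_1)\p_x^4 v-(\ga_0\p_x^5+\ga_1\p_x^3+\ga_2\p_x)(u-v)+N(u)-N(v)$. The central algebraic observation is that the Fourier symbol of the dispersive operator acting in slot $l$ is $-\phi(k_l)$, and summing over all $p_j+1$ slots produces $-\sum_{l=1}^{p_j+1}\phi(k_l)=\Phi^{(p_j)}$, which cancels the $1/\Phi^{(p_j)}$ in the multiplier. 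This yields exactly $\Ga^{(p_j)}((ik_{p_j})^{s+1}(ik_{p_j+1})^{s+1}M_{NR,j};u,\ldots,u,u-v,u-v)$ on the left-hand side of \eqref{est2}, and the remaining parabolic and nonlinear pieces must be absorbed into the right-hand side.

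For the parabolic remainder I would use Lemma \ref{oscillation_est} and \eqref{eq_mnr} to bound $|M_{NR,j}/\Phi^{(p_j)}|\lec \prod_{l=1}^{p_j-1}\LR{k_l}^3/|k_{p_j}|^4$ on the support of $M_H^{(p_j)}M_{NZ}^{(p_j)}$. An extra $|k_m|^4$ from $-\e_1\p_x^4 u$ with $m\le p_j-1$ produces no derivative loss at the ``bad'' slots, and the Sobolev inequality bounds this contribution by $\e_1\|u-v\|_{H^{s-1}}^2\|u\|_{H^8}\|u\|_{H^4}^{p_j-2}$, which fits the $C\|u-v\|_{H^s}^2(1+\|u\|_{H^8}+\|v\|_{H^8})^{2(p_{\max}-1)}$ term on the right-hand side. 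An extra $|k_l|^4$ at $l\in\{p_j,p_j+1\}$ exactly kills the $|k_{p_j}|^{-4}$ gain and yields an estimate of order $\e_1\|u-v\|_{H^{s+1}}^2\|u\|_{H^4}^{p_j-1}$; the interpolation $\|\p_x^{s+1}(u-v)\|_{L^2}^2\le \|\p_x^{s+2}(u-v)\|_{L^2}\|\p_x^s(u-v)\|_{L^2}$ together with Young's inequality lets me absorb a small fraction into $\tfrac{\e_1}{4j_0}\|\p_x^{s+2}(u-v)\|_{L^2}^2$ and send the rest into the $C\|u-v\|_{H^s}^2(\cdots)$ term. The $(\e_2-\e_1)\p_x^4 v$ contributions split by Young's inequality into a piece of the form $|\e_2-\e_1|^2\|v\|_{H^{s+2}}^2$ plus a piece $\|u-v\|_{H^{s+1}}^2\|u\|_{H^4}^{2(p_j-1)}$ treated by the same interpolation-plus-Young mechanism.

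For the nonlinear remainder there are two cases. The $N(u)-N(v)$ contributions in the two $u-v$ slots, after identifying $(ik_{p_j})^{s+1}\widehat{N_i(u)-N_i(v)}=\widehat{\p_x^{s+1}(N_i(u)-N_i(v))}$, are exactly $\sum_i K_i$ from Lemma \ref{Ki} together with its symmetric counterpart at slot $p_j+1$ (equal by the symmetry of $M_{NR,j}$ in $k_{p_j},k_{p_j+1}$). Lemma \ref{Ki} supplies three terms, and a single use of Young's inequality on the middle one produces precisely the three families $\|u-v\|_{H^s}^2(\cdots)^{2(p_{\max}-1)}$, $\|u-v\|_{H^8}^2(\|u\|_{H^s}+\|v\|_{H^s})^2(\cdots)^{2(p_{\max}-2)}$ and $\|u-v\|_{H^4}^2\|v\|_{H^{s+4}}^2(\cdots)^{2(p_{\max}-2)}$ on the right-hand side of \eqref{est2}. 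The $N(u)$ contribution at a $u$-slot is easier, since the multiplier carries only $(ik_m)^0$ there and no $\p_x^{s+1}$ falls on the new branch; expanding $N_i(u)$ as a $p_i$-fold Fourier convolution and invoking Lemma \ref{oscillation_est} and the Sobolev inequality directly gives bounds of the same three shapes.

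The main obstacle is the $-\e_1\p_x^4(u-v)$ term at a $u-v$-slot: the extra $|k_{p_j}|^4$ (resp.~$|k_{p_j+1}|^4$) saturates the $|k_{p_j}|^{-4}$ smoothing from $1/\Phi^{(p_j)}$, so no dispersive gain survives and one must rely entirely on the interpolation-plus-Young mechanism to recover a coefficient strictly smaller than $\e_1/(4j_0)$ in front of $\|\p_x^{s+2}(u-v)\|_{L^2}^2$ after summing the two slot contributions, the $(\e_2-\e_1)\p_x^4 v$ contributions, and $j=1,\ldots,j_0$. A secondary technical burden is that in the $K_{i,1}$ part of Lemma \ref{Ki} the full four-term decomposition $M_{i,11},\ldots,M_{i,14}$ via the symmetrization operator $S(k_{p_i+p_j-1},k_{p_i+p_j})$ is needed to close the remaining derivative loss, and one has to keep careful track of which error term feeds into which family on the right-hand side of \eqref{est2}.
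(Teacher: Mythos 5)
Your proposal follows essentially the same route as the paper: the paper implements your symbol cancellation $-\sum_l\phi(k_l)=\Phi^{(p_j)}$ by passing to the interaction representation $\ti u=U(-t)u$ and applying the Leibniz rule to $e^{t\Phi^{(p_j)}}M_{NR,j}/\Phi^{(p_j)}$, which is the same algebra; the treatment of $-\e_1\p_x^4(u-v)$ at a bad slot by $\|\p_x^{s+1}(u-v)\|_{L^2}^2\le\|\p_x^{s+2}(u-v)\|_{L^2}\|\p_x^{s}(u-v)\|_{L^2}$ plus Young, and the reduction of the $N_i(u)-N_i(v)$ contributions to Lemma \ref{Ki}, are exactly the paper's $J_{1,1}$ and $K_i$.

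One step does not close as written: for the $(\e_2-\e_1)\p_x^4 v$ contribution you propose the split $|\e_2-\e_1|^2\|v\|_{H^{s+2}}^2+\|u-v\|_{H^{s+1}}^2\|u\|_{H^4}^{2(p_j-1)}$ and then invoke the interpolation-plus-Young mechanism on the second piece. That mechanism needs a factor of $\e_1$ in front to absorb $\|\p_x^{s+2}(u-v)\|_{L^2}^2$ into $\tfrac{\e_1}{4j_0}\|\p_x^{s+2}(u-v)\|_{L^2}^2$, and this term carries only $|\e_1-\e_2|$, which can equal $1$ while $\e_1=0$; so $\|u-v\|_{H^{s+1}}^2$ cannot be absorbed into the right-hand side of \eqref{est2}. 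The fix is to distribute the derivatives the other way: on the support of $M_H^{(p_j)}$ one has $|k_{p_j}|\sim|k_{p_j+1}|$, so the multiplier bound $|k_{p_j}|^{s+1}|k_{p_j+1}|^{s+1}\prod_l\LR{k_l}^3$ can be read as $|k_{p_j}|^{s+2}|k_{p_j+1}|^{s}\prod_l\LR{k_l}^3$, putting all $s+2$ derivatives on the slot carrying $v$ (which lies in $H^{s+4}$) and only $s$ on $u-v$; this yields $|\e_1-\e_2|\|v\|_{H^{s+2}}\|u-v\|_{H^s}\|u\|_{H^4}^{p_j-1}$ and Young's inequality then gives exactly the terms $|\e_1-\e_2|^2\|v\|_{H^{s+2}}^2$ and $\|u-v\|_{H^s}^2\|u\|_{H^4}^{2(p_j-1)}$ appearing in \eqref{est2}. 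With that correction your argument matches the paper's proof in all essentials.
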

\begin{proof}
Let $\ti{u}(t):=U(-t)u(t), \ti{v}(t):=U(-t)v(t)$ where $U(t):=\F^{-1} e^{-t\phi(k)}\F_x$.
By the Leibniz rule, we have
\EQQ{
&\frac{d}{dt}{\Ga}^{(p_j)}\big((ik_{p_j})^{s+1}(ik_{p_j+1})^{s+1}M_{NR,j}/\Phi^{(p_j)};u,\ldots,u,u-v,u-v\big)\\
=&\frac{d}{dt}{\Ga}^{(p_j)}\big(e^{t\Phi^{(p_j)}}(ik_{p_j})^{s+1}(ik_{p_j+1})^{s+1}M_{NR,j}/\Phi^{(p_j)}; \ti{u},\ti{u},\ldots,\ti{u},\ti{u}-\ti{v},\ti{u}-\ti{v}\big)\\
=&I_{0}+I_{1}+\cdots+I_{p_j-1}+J_1+J_2,
}
where
\EQQ{
I_0:=&{\Ga}^{(p_j)}\big(\big(\p_t e^{t\Phi^{(p_j)}}\big)(ik_{p_j})^{s+1}(ik_{p_j+1})^{s+1}M_{NR,j}/\Phi^{(p_j)}; \ti{u},\ti{u},\ldots,\ti{u},\ti{u}-\ti{v},\ti{u}-\ti{v}\big)\\
I_1:=&{\Ga}^{(p_j)}\big(e^{t\Phi^{(p_j)}}(ik_{p_j})^{s+1}(ik_{p_j+1})^{s+1}M_{NR,j}/\Phi^{(p_j)}; \p_t\ti{u},\ti{u},\ldots,\ti{u},\ti{u}-\ti{v},\ti{u}-\ti{v}\big)\\
I_2:=&{\Ga}^{(p_j)}\big(e^{t\Phi^{(p_j)}}(ik_{p_j})^{s+1}(ik_{p_j+1})^{s+1}M_{NR,j}/\Phi^{(p_j)}; \ti{u},\p_t\ti{u},\ldots,\ti{u},\ti{u}-\ti{v},\ti{u}-\ti{v}\big)\\
&\vdots\\
I_{p_{j-1}}:=&{\Ga}^{(p_j)}\big(e^{t\Phi^{(p_j)}}(ik_{p_j})^{s+1}(ik_{p_j+1})^{s+1}M_{NR,j}/\Phi^{(p_j)}; \ti{u},\ti{u},\ldots,\p_t\ti{u},\ti{u}-\ti{v},\ti{u}-\ti{v}\big)\\
J_1:=&{\Ga}^{(p_j)}\big(e^{t\Phi^{(p_j)}}(ik_{p_j})^{s+1}(ik_{p_j+1})^{s+1}M_{NR,j}/\Phi^{(p_j)}; \ti{u},\ti{u},\ldots,\ti{u},\p_t(\ti{u}-\ti{v}),\ti{u}-\ti{v}\big)\\
J_2:=&{\Ga}^{(p_j)}\big(e^{t\Phi^{(p_j)}}(ik_{p_j})^{s+1}(ik_{p_j+1})^{s+1}M_{NR,j}/\Phi^{(p_j)}; \ti{u},\ti{u},\ldots,\ti{u},\ti{u}-\ti{v},\p_t(\ti{u}-\ti{v})\big).
}
Since
\EQQ{
I_0={\Ga}^{(p_j)}\big((ik_{p_j})^{s+1}(ik_{p_j+1})^{s+1}M_{NR,j};u,\ldots,u,u-v,u-v \big),
}
the left-hand side of \eqref{est2} is equal to $|I_1+\cdots +I_{p_j-1}+J_1+J_2|$.
We estimate only $|I_1|$ and $|J_1|$ because we can easily estimate $|I_2|,\ldots, |I_{p_j-1}|$ and $|J_2|$ in the same manner. 
Note that $\ti{u}, \ti{v}$ satisfy
\EQ{\label{E3.12}
\p_t \ti{u}=U(-t)(-\e_1\p_x^4 u+N(u)), \ \p_t \ti{v}=U(-t)(-\e_2\p_x^4 v+N(v)).
}
We substitute \eqref{E3.12} for $\p_t \ti{u}$ in $I_1$.
Then, we have
\EQQ{
I_1=&-\e_1 {\Ga}^{(p_j)}\Big(\frac{(ik_{p_j})^{s+1}(ik_{p_j+1})^{s+1}M_{NR,j}}{\Phi^{(p_j)}};\p_x^4 u,u,\ldots,u,u-v,u-v\Big)\\
&+ {\Ga}^{(p_j)}\Big(\frac{(ik_{p_j})^{s+1}(ik_{p_j+1})^{s+1}M_{NR,j}}{\Phi^{(p_j)}}; N(u),u,\ldots,u,u-v,u-v\Big).
}
By Lemma \ref{oscillation_est} and \eqref{eq_mnr}, it follows that
\EQQ{
\Big|\frac{(ik_{p_j})^{s+1}(ik_{p_j+1})^{s+1}M_{NR,j}}{\Phi^{(p_j)}}\Big| \lec |k_{p_j}|^s|k_{p_j+1}|^s\LR{k_1}^{1/2}\prod_{l=2}^{p_j-1}\LR{k_l}^3.
}
Therefore, by the Sobolev inequality,
\EQQ{
|I_1|&\lec \|u-v\|^2_{H^{s}}(\e_1\|\p_x^4 u\|_{H^2}\|u\|_{H^4}^{p_j-2}+\sum_{i=1}^{j_0}\|N_i(u)\|_{H^2}\|u\|_{H^4}^{p_j-2})\\
&\lec \|u-v\|_{H^s}^2(1+\|u\|_{H^6})^{2(p_{\max}-1)}.
}
We substitute \eqref{E3.12} for $\p_t (\ti{u}- \ti{v})$ in $J_1$ to have
\EQQ{
J_1=&{\Ga}^{(p_j)}\Big(\frac{(ik_{p_j})^{s+1}(ik_{p_j+1})^{s+1}M_{NR,j}}{\Phi^{(p_j)}};u,\ldots,u,-\e_1\p_x^4u+\e_2\p_x^4v+N(u)-N(v),u-v\Big)\\
=&-\e_1 {\Ga}^{(p_j)}\Big(\frac{(ik_{p_j})^{s+5}(ik_{p_j+1})^{s+1}M_{NR,j}}{\Phi^{(p_j)}};u,\ldots,u,u-v,u-v\Big)\\
&-(\e_1-\e_2) {\Ga}^{(p_j)}\Big(\frac{(ik_{p_j})^{s+5}(ik_{p_j+1})^{s+1}M_{NR,j}}{\Phi^{(p_j)}};u,\ldots,u,v,u-v\Big)\\
&+ \sum_{i=1}^{j_0}{\Ga}^{(p_j)}\Big(\frac{(ik_{p_j+1})^{s+1}M_{NR,j}}{\Phi^{(p_j)}}; u,\ldots,u,\p_x^{s+1}\big(N_i(u)-N_i(v)\big),u-v\Big)\\
=:&J_{1,1}+J_{1,2}+\sum_{i=1}^{j_0}K_i
}
By Lemma \ref{oscillation_est} and \eqref{eq_mnr},
\EQQ{
&\Big|\frac{(ik_{p_j})^{s+5}(ik_{p_j+1})^{s+1}M_{NR,j}}{\Phi^{(p_j)}}\Big|\\
\lec &|k_{p_j}|^{s+1}|k_{p_{j+1}}|^{s+1}\prod_{l=1}^{p_j-1} \LR{k_l}^3 \sim |k_{p_j}|^{s+2}|k_{p_{j+1}}|^{s}\prod_{l=1}^{p_j-1} \LR{k_l}^3.
}
Therefore, by the Sobolev inequality and Lemma \ref{lemGN}, we have
\EQQ{
|J_{1,1}|\le& C \e_1 \|u\|^{p_j-1}_{H^4}\|\p_x^{s+1}(u-v)\|_{L^2}^2\\
\le& C\e_1 \|u\|^{p_j-1}_{H^4}\|\p_x^s(u-v)\|_{L^2}\|\p_x^{s+2}(u-v)\|_{L^2}\\
\le& \frac{\e_1}{8j_0}\|\p_x^{s+2}(u-v)\|_{L^2}^2+C\e_1\|u\|^{2(p_j-1)}_{H^4}\|\p_x^s(u-v)\|^2_{L^2},
}
and
\EQQ{
|J_{1,2}|\lec  |\e_1-\e_2|\|u\|_{H^4}^{p_j-1}\|v\|_{H^{s+2}}\|u-v\|_{H^s}\lec  |\e_1-\e_2|^2\|v\|^2_{H^{s+2}} +\|u-v\|^2_{H^s}\|u\|_{H^4}^{2(p_j-1)}.
}
By Lemma \ref{Ki}, $|K_i|$ is bounded by the right-hand side of \eqref{est2}.
\end{proof}
\begin{prop}\label{multi_est3}
Let $q\in \N$, $\e_1,\e_2\in [0,1]$ and $u\in L^\infty([0,T];H^7(\T))$ (resp. $v\in L^\infty([0,T];H^4(\T))$) be a solution to \eqref{es1} with $\e=\e_1$ (resp. $\e=\e_2$) on $[0,T]$.
Then, we have
\EQ{\label{e3.71}
&\frac{d}{dt} \|u-v\|_{L^2}^2\\
\lec& |\e_1-\e_2|\|v\|_{L^2}\|u-v\|_{H^4}
+\|u-v\|^2_{H^4}\sum_{j=1}^{j_0}(\|u\|_{H^4}+\|v\|_{H^4})^{p_j-1},
}
and
\EQ{\label{e3.72}
\frac{d}{dt} \|u\|_{H^4}^{q} \lec \|u\|_{H^7}\sum_{j=1}^{j_0}\|u\|_{H^4}^{p_j+q-2}
}
on $[0,T]$.
\end{prop}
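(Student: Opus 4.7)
The plan is to derive both estimates directly from the evolution equation \eqref{es1} by standard energy identities, using integration by parts to kill the skew-adjoint linear dispersion and the Sobolev embedding $H^4(\T) \hookrightarrow C^3(\T)$ to handle the nonlinear terms.

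For the first inequality, I would differentiate $\|u-v\|_{L^2}^2$ in time and write the $u-v$ equation as
\[
\p_t(u-v) = -\e_1\p_x^4(u-v) + (\e_2-\e_1)\p_x^4 v - \ga_0\p_x^5(u-v) - \ga_1\p_x^3(u-v) - \ga_2\p_x(u-v) + N(u)-N(v).
\]
Taking the $L^2$-pairing with $u-v$: the three odd-order linear terms vanish because $\p_x^{2k+1}$ is skew-adjoint on $\T$; the $-\e_1\p_x^4(u-v)$ pairing becomes $-\e_1\|\p_x^2(u-v)\|_{L^2}^2 \le 0$ after two integrations by parts, so it is dropped from the upper bound; the $(\e_2-\e_1)\p_x^4 v$ term yields, after four integrations by parts onto $u-v$, a contribution bounded by $|\e_1-\e_2|\|v\|_{L^2}\|u-v\|_{H^4}$. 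For $N(u)-N(v) = \sum_j (N_j(u)-N_j(v))$ I would use a standard telescoping: each difference $N_j(u)-N_j(v)$ decomposes into a sum of products containing exactly one factor of $\p_x^{k_i}(u-v)$ with $k_i\le 3$ multiplied by $p_j-1$ factors of the form $\p_x^{m}u$ or $\p_x^{m}v$ with $m\le 3$. Pairing with $u-v$ and applying Hölder with $L^2\times L^\infty\times\cdots\times L^\infty$, then Sobolev embedding on $\T$, bounds these $L^\infty$ factors by $\|u\|_{H^4}+\|v\|_{H^4}$, yielding the claimed nonlinear bound $\|u-v\|_{H^4}^2(\|u\|_{H^4}+\|v\|_{H^4})^{p_j-1}$.

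For the second inequality, I would reduce to $q=2$ via $\frac{d}{dt}\|u\|_{H^4}^q = \frac{q}{2}\|u\|_{H^4}^{q-2}\frac{d}{dt}\|u\|_{H^4}^2$, and then use the equivalent norm $\|u\|_{H^4}^2 \sim \sum_{k=0}^4 \|\p_x^k u\|_{L^2}^2$. For each $k\le 4$,
\[
\tfrac{d}{dt}\|\p_x^k u\|_{L^2}^2 = -2\e_1\|\p_x^{k+2}u\|_{L^2}^2 + 2(\p_x^k u,\p_x^k N(u))_{L^2},
\]
since the $\ga_i$ linear terms contribute zero by skew-adjointness; the dissipative term is nonpositive and dropped. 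To estimate $(\p_x^k u,\p_x^k N_j(u))_{L^2}$ I would apply Cauchy--Schwarz and a tame estimate for $\|\p_x^k N_j(u)\|_{L^2}$: expanding by Leibniz, the worst case places all $k$ extra derivatives on a single factor, giving at most $\p_x^{k+3}u \le \p_x^7 u$, and bounding the remaining $p_j-1$ factors in $L^\infty$ by $\|u\|_{H^4}$ via the Sobolev embedding yields $\|\p_x^k N_j(u)\|_{L^2}\lec \|u\|_{H^{k+3}}\|u\|_{H^4}^{p_j-1}\lec \|u\|_{H^7}\|u\|_{H^4}^{p_j-1}$. Thus $|(\p_x^k u,\p_x^k N_j(u))_{L^2}|\lec \|u\|_{H^7}\|u\|_{H^4}^{p_j}$, and multiplying by $\|u\|_{H^4}^{q-2}$ gives the claimed bound.

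The computations are entirely standard once the dispersion is neutralized by integration by parts; the only small care point is to make sure the $\e_1\p_x^4$ dissipation term goes on the correct side in the first inequality (it is nonpositive so can be discarded), and that the $(\e_2-\e_1)\p_x^4 v$ term is moved onto $u-v$ by integration by parts so that the factor of $v$ picks up no derivative. There is no genuine obstacle, since all derivatives of order $\le 3$ appearing in the nonlinearity sit below the $H^4\hookrightarrow C^3$ threshold on the torus.
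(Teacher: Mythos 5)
Your proposal follows essentially the same route as the paper: pair the difference equation with $u-v$, kill the odd-order dispersive terms by skew-adjointness, discard the nonpositive $-\e_1\|\p_x^2(u-v)\|_{L^2}^2$ term, move all four derivatives of the $(\e_1-\e_2)\p_x^4 v$ term onto $u-v$, and estimate the nonlinearity by Sobolev embedding; the $H^4$-inequality is likewise obtained from the $L^2$ and $\dot H^4$ energy identities plus the chain rule in $q$. The one point where your justification is too quick is the tame estimate $\|\p_x^4 N_j(u)\|_{L^2}\lec\|u\|_{H^7}\|u\|_{H^4}^{p_j-1}$: it is not true that the worst Leibniz term puts all four extra derivatives on one factor. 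When $a_j\ge 2$ the expansion of $\p_x^4\big[(\p_x^3u)^{a_j}\cdots\big]$ produces balanced terms such as $(\p_x^5u)^2(\p_x^3u)^{a_j-2}\cdots$, and there your $L^2\times L^\infty\times\cdots$ scheme breaks down, since $\p_x^5u$ is controlled in $L^2$ only by $\|u\|_{H^5}$ and in $L^\infty$ only by $\|u\|_{H^{5+1/2+}}$, neither of which is $\|u\|_{H^4}$. These terms still obey the claimed bound, but via Gagliardo--Nirenberg interpolation (e.g.\ $\|\p_x^5u\|_{L^4}^2\lec\|u\|_{H^4}^{7/6}\|u\|_{H^7}^{5/6}\le\|u\|_{H^4}\|u\|_{H^7}$), which is precisely why the paper invokes Lemma \ref{lemGN} at this step rather than plain Sobolev embedding. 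With that repair your argument for \eqref{e3.72} is complete; the rest of the proposal is fine as written.
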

\begin{proof}
\EQQ{
&\big(\p_t (u-v), u-v\big)_{L^2}\\
=&\big(-\e_1\p_x^4u+\e_2\p_x^4v,u-v\big)_{L^2}+\big(N(u)-N(v),u-v\big)_{L^2}\\
=&-\e_1\big(\p_x^4(u-v),u-v\big)_{L^2}-(\e_1-\e_2)\big(\p_x^4v,u-v\big)_{L^2}+\sum_{j=1}^{j_0}\big(N_j(u)-N_j(v),u-v\big)_{L^2}\\
}
Thus,
\EQQ{
&\frac{1}{2}\frac{d}{dt}\|u-v\|_{L^2}^2\\
\le &|\e_1-\e_2|\|v\|_{L^2}\|\p_x^4(u-v)\|_{L^2}+\|u-v\|_{L^2}\sum_{j=1}^{j_0}\|N_j(u)-N_j(v)\|_{L^2}.
}
Therefore, by the Sobolev inequality, we obtain \eqref{e3.71}. Put $v\equiv 0$.
Then, by the same manner, we have
\EQ{\label{el2}
\frac{d}{dt}\|u\|_{L^2}^2 \lec \|u\|_{L^2}\sum_{j=1}^{j_0}\|u\|_{H^4}^{p_j}.
}
Since
\EQQ{
\big(\p_t \p_x^4 u, \p_x^4u\big)_{L^2}=-\e_1\big(\p_x^{8}u,\p_x^4u \big)_{L^2}+\sum_{j=1}^{j_0}\big(\p_x^4N_j(u), \p_x^4u\big)_{L^2},
}
by Lemma \ref{lemGN}, we have
\EQ{\label{eh4}
\frac{d}{dt}\|\p_x^4 u\|_{L^2}^2 \lec  \sum_{j=1}^{j_0}\|\p_x^4 N_j(u)\|_{L^2}\|\p_x^4 u\|_{L^2} \lec \|u\|_{H^7}\sum_{j=1}^{j_0}\|u\|_{H^4}^{p_j}.
}
From \eqref{el2} and \eqref{eh4}, we obtain
\EQQ{
2\|u\|_{H^4}\frac{d}{dt}\|u\|_{H^4}=\frac{d}{dt}\|u\|_{H^4}^2\lec
\|u\|_{H^7}\sum_{j=1}^{j_0}\|u\|_{H^4}^{p_j},
}
which imply \eqref{e3.72} with $q=1$ and $q=2$. We easily obtain \eqref{e3.72} with $q \ge 3$ from
\EQQ{
\frac{d}{dt}\|u\|_{H^4}^q =q \|u\|_{H^4}^{q-1}\frac{d}{dt}\|u\|_{H^4}.
}
\end{proof}
Now, we prove Proposition \ref{thm_energy_est}.
\begin{proof}[Proof of Proposition \ref{thm_energy_est}]
Put
\EQQ{
I_j:=&(-1)^s\big({N}_j(u)-N_j(v) ,\p_x^{2s} (u-v)\big)_{L^2}
+{P}_{N_j}(u)\|\p_x^{s+1} (u-v)\|_{L^2}^2\\
&+\frac{d}{dt}{\Ga}^{(p_j)}\Big( \frac{(ik_{p_j})^{s+1}(ik_{p_j+1})^{s+1}M_{NR,j}}{\Phi^{(p_j)}}; u,\ldots,u,u-v,u-v \Big).
}
Then, by Propositions \ref{multi_est}, \ref{multi_est2} and Lemma \ref{sobolev_energy}, we have
\EQ{\label{eq3.1.1}
\Big|\sum_{j=1}^{j_0} I_j \Big|\le &\frac{\e_1}{4}\|\p_x^{s+2} (u-v)\|_{L^2}^2+C|\e_1-\e_2|^2E_{s+2}(v)\\
+&CF_s(u,v)(1+E_8(u)+E_8(v))^{p_{\max}-1}\\
+&CF_8(u,v)(E_s(u)+E_s(v))(1+E_8(u)+E_8(v))^{p_{\max}-2}\\
+&CF_7(u,v)E_{s+4}(v)(1+E_8(u)+E_8(v))^{p_{\max}-2}.
}
By Proposition \ref{multi_est3} and Lemma \ref{sobolev_energy}, we have
\EQ{\label{eq3.1.2}
&\frac{d}{dt}\Big( \|u-v\|_{L^2}^2+C_s\|u-v\|_{L^2}^2\sum_{j=1}^{j_0}\|u\|_{H^4}^{s(p_j-1)}\Big)\\
\lec &\big(|\e_1-\e_2|\|v\|_{L^2}\|u-v\|_{H^4}+\|u-v\|_{H^4}^2(1+\|u\|_{H^4}+\|v\|_{H^4})^{p_{\max}-1}\big)(1+\|u\|_{H^4})^{s(p_{\max}-1)}\\
&+\|u-v\|^2_{L^2}\|u\|_{H^7}(1+\|u\|_{H^4})^{p_{\max}+s(p_{\max}-1)-2}\\
\lec & |\e_1-\e_2|^2E_1(v)+F_4(u,v)(1+E_7(u)+E_4(v))^{s(p_{\max}-1)}.
}
By integration by parts,
\EQQ{
&\big(\p_t \p_x^s(u-v), \p_x^s(u-v)\big)_{L^2}\\
=&-\big(\e_1\p_x^{s+4}u-\e_2\p_x^{s+4}v,\p_x^{s}(u-v)\big)_{L^2}+\sum_{j=1}^{j_0}(-1)^s\big(N_j(u)-N_j(v),\p_x^{2s}(u-v)\big)_{L^2}.
}
Thus,
\EQ{\label{eq3.1.3}
&\frac{1}{2}\frac{d}{dt}\|\p_x^s(u-v)\|_{L^2}^2+\e_1\|\p_x^{s+2}(u-v)\|_{L^2}^2\\
&-\sum_{j=1}^{j_0}(-1)^s\big(N_j(u)-N_j(v),\p_x^{2s}(u-v)\big)_{L^2}\\
=&-(\e_1-\e_2) \big(\p_x^{s+4}v,\p_x^{s}(u-v)\big)_{L^2}\le F_s(u,v)+|\e_1-\e_2|^2E_{s+4}(v).
}
Collecting \eqref{eq3.1.1}--\eqref{eq3.1.3}, we obtain the desired result.
\end{proof}

\section{Proofs of the main theorems}
In this section, we prove Theorems \ref{thm_nonparabolic}, \ref{thm_parabolic1}, \ref{thm_parabolic2} and \ref{thm_higher_order}.
The existence, uniqueness and continuous dependence results on $[-T,0]$ with $P_N \le 0$ in Theorems \ref{thm_nonparabolic} and \ref{thm_parabolic1} follows from the results on $[0,T]$ with $P_N \ge 0$ and the transform $t \to -t$.
Thus, we show only the results on $[0,T]$ with $P_N \ge 0$.
By the same reason we prove Theorem \ref{thm_parabolic2} only on $[0,T]$ with $P_N(\vp)<0$.
First, we show the uniqueness results in Theorems \ref{thm_nonparabolic} and \ref{thm_parabolic1}.
\begin{proof}[Proof of the uniqueness results]
Since $u_1,u_2 \in L^\infty([0,T];H^{12}(\T))$, by Lemma \ref{sobolev_energy}, there exist $M>0$ such that $\sup_{t\in [0,T]}(E_{12}(u_1(t))+E_{12}(u_2(t)))\le M$.
By Proposition \ref{thm_energy_est} with $\e_1=\e_2=0$, $s=8$, we have
\EQQ{
\frac{d}{dt}F_{8}(u_1(t),u_2(t))\lec F_{8}(u_1(t),u_2(t))(1+M)^{r(8)}
}
on $[0,T]$. Thus, by Lemma \ref{sobolev_energy} and Gronwall's inequality, we have
\EQQ{
\|u_1(t)-u_2(t)\|_{H^8}^2 \lec F_{8}(u_1(t),u_2(t))\le F_{8}(\vp,\vp)e^{C(1+M)^{r(8)}t}=0,
}
which implies $u_1(t)=u_2(t)$ on $[0,T]$.
\end{proof}
Next, we show the existence of a solution $u$ to \eqref{e1}--\eqref{e2} as a limit of the solutions $\{u_\e\}$ to \eqref{es1}--\eqref{es2} which are obtained by Proposition \ref{para_exist}.
In this process, it is important to ensure that $T_\e$ does not go to $0$ when $\e\to 0$.
For that purpose, we prepare a priori estimate below.
By \eqref{EF} and substituting $v \equiv 0$, we obtain the following energy inequality as a corollary of Proposition \ref{thm_energy_est}.
\begin{cor}[energy inequality]\label{cor_energy_est}
Assume that $s\in\N$, $s \ge 8$ and $\e \in [0,1]$.
Let $u_\e \in L^\infty([0,T];H^s(\T))$ be a solution to \eqref{es1} on $[0,T]$.
Then, it follows that
\EQQ{
\frac{d}{dt} E_{s}(u_\e(t))
+P_N(u_\e(t))\|\p_x^{s+1} u_\e(t)\|_{L^2}^2
\lec  E_s(u_\e(t))(1+E_{8}(u_\e(t)))^{r(s)}
}
on $[0,T]$, where $r(s):=s(p_{\max}-1) $ and the implicit constant depends on $s$ and does not depend on $\e, u_\e$, and $T$.
\end{cor}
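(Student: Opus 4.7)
The plan is to apply Proposition~\ref{thm_energy_est} with the second solution taken to be the trivial one, and then simplify the right-hand side. Specifically, I would set $u := u_\e$, $v \equiv 0$, and $\e_1 = \e_2 = \e$. The function $v \equiv 0$ is genuinely a solution to \eqref{es1} for any $\e$ because $N$ is a polynomial without constant or linear terms, so $N(0,0,0,0) = 0$, and it trivially lies in $L^\infty([0,T]; H^{s+4}(\T))$. The hypotheses of Proposition~\ref{thm_energy_est} (with the substitution $s \mapsto s$, still requiring $s \ge 8$) are therefore satisfied.

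Next I would invoke \eqref{EF}, which says $E_s(f) = F_s(f,0)$, to identify $F_s(u_\e(t), 0) = E_s(u_\e(t))$ and $F_8(u_\e(t), 0) = E_8(u_\e(t))$. Plugging $v \equiv 0$ and $\e_1 - \e_2 = 0$ into the conclusion of Proposition~\ref{thm_energy_est} makes all $E_*(v)$-terms vanish as well as the $|\e_1 - \e_2|^2$ term, so the inequality reduces to
\begin{equation*}
\frac{d}{dt} E_s(u_\e(t)) + P_N(u_\e(t))\|\p_x^{s+1} u_\e(t)\|_{L^2}^2
\lec E_s(u_\e)(1+E_8(u_\e))^{r(s)} + E_8(u_\e)(1+E_8(u_\e))^{p_{\max}-2} E_s(u_\e).
\end{equation*}

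Finally, it remains to absorb the second term of the right-hand side into the first. I would estimate
\begin{equation*}
E_8(u_\e)(1+E_8(u_\e))^{p_{\max}-2} \le (1+E_8(u_\e))^{p_{\max}-1} \le (1+E_8(u_\e))^{r(s)},
\end{equation*}
where the last inequality holds because $r(s) = s(p_{\max}-1) \ge p_{\max}-1$ for $s \ge 1$. This yields exactly the claimed energy inequality, with an implicit constant depending on $s$ but not on $\e$, $u_\e$, or $T$. There is no real obstacle here: the entire argument is a direct specialization of Proposition~\ref{thm_energy_est}, whose proof (carried out in the preceding section via Propositions~\ref{multi_est}, \ref{multi_est2}, and \ref{multi_est3}) already contains all the analytic work. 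The only point worth double-checking is the admissibility of $v \equiv 0$ and that the choice $\e_1 = \e_2$ kills the dissipation-mismatch term, both of which are immediate.
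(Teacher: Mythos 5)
Your proposal is correct and is exactly the paper's own argument: the author derives Corollary \ref{cor_energy_est} ``by \eqref{EF} and substituting $v\equiv 0$'' into Proposition \ref{thm_energy_est}, just as you do. Your extra checks (that $v\equiv 0$ is an admissible solution since $N$ has no constant or linear terms, and that $(1+E_8)^{p_{\max}-1}\le (1+E_8)^{r(s)}$ absorbs the second term) are the right ones and are left implicit in the paper.
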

\begin{lem}\label{pre_apriori}
Assume the assumption in Corollary \ref{cor_energy_est} and $P_N(u_\e)\ge 0$ on $[0,T_0]$.
Let $T_1:=\min\{T,T_0,(2C_1)^{-1}(1+E_8(\vp))^{-r(8)}\}$. Then, it follows that
\EQQ{
E_s(u_\e(t))+\int_0^t P_N(u_\e(t'))\|\p_x^{s+1} u_\e(t')\|_{L^2}^2\, dt' \le C_2 E_s(\vp),
}
on $[0,T_1]$, where $C_1, C_2$ are sufficiently large constants, $C_1$ does not depend on $\e, u_\e, s, \vp$ and $C_2=C_2(s,E_8(\vp))$ does not depend on $\e$ and $u_\e$.
\end{lem}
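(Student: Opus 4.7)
The plan is to obtain the bound in two stages, first for the base level $s=8$ to control the growth factor $(1+E_8(u_\e(t)))^{r(s)}$, and then feed this back into the energy inequality at general level $s$.

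First I would apply Corollary \ref{cor_energy_est} with $s=8$. Since $P_N(u_\e(t))\ge 0$ on $[0,T_0]$, the second term on the left-hand side is nonnegative and can be discarded, leaving the scalar differential inequality
\EQQ{
\frac{d}{dt} E_8(u_\e(t)) \le C_1 E_8(u_\e(t))\bigl(1+E_8(u_\e(t))\bigr)^{r(8)}
}
with a universal constant $C_1$ depending only on the nonlinearity (so not on $\e$, $s$, or $\vp$). Setting $f(t)=1+E_8(u_\e(t))$ gives $f'(t)\le C_1 f(t)^{r(8)+1}$, a Bernoulli-type inequality that integrates explicitly:
\EQQ{
f(t)^{-r(8)} \ge f(0)^{-r(8)} - r(8) C_1 t.
}
Choosing $T_1$ as in the statement (absorbing the harmless factor $r(8)$ into $C_1$) ensures $f(t)^{-r(8)}\ge \tfrac12 f(0)^{-r(8)}$ on $[0,T_1]$, hence $E_8(u_\e(t)) \le 2^{1/r(8)}(1+E_8(\vp)) \le C(E_8(\vp))$ on $[0,T_1]$.

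Next I would apply Corollary \ref{cor_energy_est} at the given level $s$ and substitute the $E_8$-bound just obtained. This turns the factor $(1+E_8(u_\e(t)))^{r(s)}$ into a constant $C(s,E_8(\vp))$, leaving
\EQQ{
\frac{d}{dt} E_s(u_\e(t)) + P_N(u_\e(t))\|\p_x^{s+1} u_\e(t)\|_{L^2}^2 \le C(s,E_8(\vp))\, E_s(u_\e(t)).
}
Since $P_N(u_\e(t))\ge 0$, Gronwall's inequality on this differential inequality yields $E_s(u_\e(t))\le E_s(\vp) e^{C(s,E_8(\vp)) t}$, and integrating the inequality itself over $[0,t]\subset[0,T_1]$ gives the bound on $\int_0^t P_N(u_\e(t'))\|\p_x^{s+1}u_\e(t')\|_{L^2}^2 dt'$ by the same expression. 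Collecting both estimates and absorbing $e^{C(s,E_8(\vp))T_1}$ into a single constant $C_2=C_2(s,E_8(\vp))$ yields the desired inequality.

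The only real subtlety is ensuring that the constant $C_1$ and the threshold time $T_1$ do not depend on $s$ or $\vp$ beyond the $E_8(\vp)$ dependence indicated; this is handled by doing the bootstrap strictly at level $s=8$ before moving to the general level $s$, which decouples the universal growth control at level $8$ from the $s$-dependent Gronwall step. Apart from that, every step reduces to the Bernoulli ODE and Gronwall's inequality applied to the inequality already furnished by Corollary \ref{cor_energy_est}.
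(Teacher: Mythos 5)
Your proof is correct and follows essentially the same route as the paper: first run Corollary \ref{cor_energy_est} at level $s=8$, drop the nonnegative $P_N$-term, and solve the resulting Bernoulli-type differential inequality to get $E_8(u_\e(t))\lesssim 1+E_8(\vp)$ on $[0,T_1]$; then feed this into the level-$s$ inequality and apply Gronwall, integrating once more to capture the $\int_0^t P_N\|\p_x^{s+1}u_\e\|_{L^2}^2$ term. The only cosmetic difference is that the paper writes the explicit solution $E_8(u_\e(t))\le (1+E_8(\vp))\big(1-C_1t(1+E_8(\vp))^{r(8)}\big)^{-1/r(8)}$ where you phrase the same computation via $f^{-r(8)}$, which changes nothing.
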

\begin{proof}
By Corollary \ref{cor_energy_est},
\EQQ{
\frac{d}{dt} E_{s}(u_\e(t))\le C(s) E_s(u_\e(t))(1+E_{8}(u_\e(t)))^{r(s)},
}
on $[0,T_1]$.
Let $s=8$. Then, we obtain
\EQ{\label{eq32.1}
E_{8}(u_\e(t))\le \frac{1+E_{8}(\vp)}{\big(1-C_1t(1+E_{8}(\vp))^{r(8)}\big)^{1/r(8)}}\lec 1+E_{8}(\vp)
}
on $[0,T_1]$. Combining Corollary \ref{cor_energy_est} and \eqref{eq32.1}, we have
\EQQ{
\frac{d}{dt} E_s(u_\e(t))+P_N(u_\e(t))\|\p_x^{s+1} u_\e(t)\|^2_{L^2}\lec E_s(u_\e(t))(1+E_{8}(\vp))^{r(s)}
}
on $[0,T_1]$.
Therefore, we obtain
\EQQ{
&E_s(u_\e(t))+\int_0^t P_N(u_\e(t'))\|\p_x^{s+1} u_\e(t')\|^2_{L^2} \, dt'\\
\le & E_s(\vp)e^{C(s)(1+E_{8}(\vp))^{r(s)} t}\le C_2(s,E_8(\vp))E_s(\vp)
}
on $[0,T_1]$.
\end{proof}
\begin{lem}\label{pre_apriori2}
Let $\e\in [0,1]$ and $u_\e\in L^\infty([0,T];H^{8}(\T))$ be a solution to \eqref{es1}--\eqref{es2} on $[0,T]$.
Assume that $K>0$ satisfies $\sup_{t\in [0,T]} E_8(u_\e(t)) \le K$
and $P_N(\vp)>0$. Then, there exists $T_+=T_+(K, P_N(\vp))$
such that $0<T_+\le T$ and
\EQ{\label{Pest}
\inf_{t \in [0,  T_+]}P_N(u_\e(t))  \ge  P_N(\vp)/2.
} 
\end{lem}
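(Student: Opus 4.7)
The plan is to show that the map $t \mapsto P_N(u_\e(t))$ is Lipschitz continuous on $[0,T]$ with a Lipschitz constant $L=L(K)$ depending only on $K$, and then to set
\[
T_+ := \min\Big\{T,\, \frac{P_N(\vp)}{2L(K)}\Big\},
\]
so that $P_N(u_\e(t))\ge P_N(\vp)-L(K)t\ge P_N(\vp)/2$ on $[0,T_+]$. The desired estimate then follows immediately.

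First I would establish that $P_N$ is Lipschitz on bounded subsets of $H^4(\T)$. Telescoping, the difference $P_{N_j}(f)-P_{N_j}(g)$ is a finite sum of integrals over $\T$ of products of factors among $\p_x^3 f,\p_x^2 f,\p_x f, f,\p_x^3 g, \p_x^2 g, \p_x g, g$ in which exactly one factor has the form $\p_x^k(f-g)$ with $0\le k\le 3$. Using the Sobolev embedding $H^1(\T)\hookrightarrow L^\infty(\T)$, each such integral is bounded by $\|f-g\|_{H^3}$ multiplied by a polynomial in $\|f\|_{H^4}+\|g\|_{H^4}$.

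Second, I would show that $u_\e:[0,T]\to H^3(\T)$ is Lipschitz with a constant depending only on $K$. By Lemma \ref{sobolev_energy}, the hypothesis $\sup_{t\in[0,T]}E_8(u_\e(t))\le K$ gives $\sup_{t\in[0,T]}\|u_\e(t)\|_{H^8}\le C(K)$. Using \eqref{es1},
\[
\p_t u_\e = -\e\p_x^4 u_\e -\ga_0\p_x^5 u_\e -\ga_1\p_x^3 u_\e -\ga_2\p_x u_\e + N(\p_x^3 u_\e,\p_x^2 u_\e,\p_x u_\e, u_\e),
\]
so the linear part is bounded in $H^3$ by $C\|u_\e\|_{H^8}$ (here $\e\in[0,1]$ is crucial to keep the constant uniform), while the nonlinear part $N$ is a polynomial in derivatives of $u_\e$ up to order $3$ and so $\|N(u_\e)\|_{H^3}$ is bounded by a polynomial in $\|u_\e\|_{H^6}$ via the algebra property of $H^3(\T)$. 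Hence $\|\p_t u_\e(t)\|_{H^3}\le C'(K)$ uniformly in $t\in[0,T]$ and $\e\in[0,1]$, and integrating yields $\|u_\e(t)-u_\e(s)\|_{H^3}\le C'(K)|t-s|$.

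Combining the two Lipschitz estimates gives $|P_N(u_\e(t))-P_N(\vp)|\le L(K)\,t$, completing the argument. The main technical point is the control of $\|N(\p_x^3 u_\e,\ldots,u_\e)\|_{H^3}$: taking three spatial derivatives of the polynomial nonlinearity produces derivatives of $u_\e$ of order up to $6$, which are bounded in $L^\infty$ by $\|u_\e\|_{H^8}$ via Sobolev embedding on the torus, so every resulting integral is polynomially controlled by $K$ uniformly in $\e\in[0,1]$.
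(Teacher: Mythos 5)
Your proposal is correct and follows essentially the same route as the paper: both arguments use the uniform bound $\sup_t\|u_\e(t)\|_{H^8}\lesssim C(K)$ from Lemma \ref{sobolev_energy} together with the equation \eqref{es1} to get $\|\p_t u_\e(t)\|_{H^3}\le C(K)$ uniformly in $\e\in[0,1]$, deduce that $t\mapsto P_N(u_\e(t))$ varies at a rate controlled by $C(K)$, and then choose $T_+=\min\{T,P_N(\vp)/2C(K)\}$. The only cosmetic difference is that the paper differentiates $P_N(u_\e(t))$ in $t$ and applies the mean value theorem, whereas you compose two Lipschitz estimates; the content is identical.
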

\begin{proof}
By \eqref{es1}, the Sobolev inequality and Lemma \ref{sobolev_energy},
\EQQ{
\sup_{t \in [0,  T_+]}\|\p_t u_\e(t)\|_{H^3} \lec \sup_{t \in [0,  T_+]}(1+\|u_\e(t)\|_{H^{8}})^{p_{\max}} \lec C(K).
}
Thus, by the Sobolev inequality and Lemma \ref{sobolev_energy}, we have
\EQQ{
\sup_{t \in [0,  T_+]} \Big|\frac{d}{dt}P_N(u_\e(t))\Big| \lec \sup_{t \in [0,  T_+]} \|\p_t u_\e(t)\|_{H^3}(1+\|u_\e(t)\|_{H^4})^{p_{\max}-2} \le C(K).
}
Put $T_+:=\min\{T, P_N(\vp)/2C(K)\}$. Then. by the mean value theorem,
\EQQ{
\sup_{t \in [0,  T_+]}|P_N(u_\e(t))-P_N(\vp)|\le C(K)T_+  \le P_N(\vp)/2.
}
\end{proof}
\begin{prop}[a priori estimate]\label{prop_apriori}
Assume the assumption in Corollary \ref{cor_energy_est}.
If $P_N \equiv 0$ or $P_N(\vp) >0$, then there exists $T_*=T_*(E_8(\vp), P_N(\vp))\in (0,T]$ such that
\EQS{
&\sup_{t\in [0,T_*]}\Big\{E_s(u_\e(t))+P_N(\vp) \int_0^t \|\p_x^{s+1} u_\e(t')\|_{L^2}^2\, dt'\Big\} \lec E_s(\vp),\label{apriori1}\\
&\inf_{t \in [0,T_*]}P_N(u_\e(t))  \ge  P_N(\vp)/2,\label{apriori2}
}
where the implicit constant does not depend on $\e, u_\e$ and may depend on $s, E_8(\vp)$.
\end{prop}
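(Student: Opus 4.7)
The proof splits according to whether $P_N \equiv 0$ or $P_N(\vp) > 0$. In the former case $P_N(u_\e)\equiv 0$ on $[0,T]$, so Lemma \ref{pre_apriori} applies directly with $T_0 = T$, and setting $T_* := \min\{T, (2C_1)^{-1}(1+E_8(\vp))^{-r(8)}\}$ gives \eqref{apriori1} at once (the $P_N(\vp)$ prefactor annihilates the integral) while \eqref{apriori2} is trivial.

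The interesting case is $P_N(\vp)>0$, where the plan is to run a bootstrap that interleaves Lemmas \ref{pre_apriori} and \ref{pre_apriori2}: the former needs an a priori sign condition $P_N(u_\e)\ge 0$, while the latter needs an a priori $H^8$ bound, so neither can be invoked first. I would fix $K := 2\cdot 2^{1/r(8)}(1+E_8(\vp))$ and let $\widehat T_+ = \widehat T_+(K, P_N(\vp))$ denote the $T$-independent quantity $P_N(\vp)/(2C(K))$ that appears inside the $\min$ defining $T_+$ in the proof of Lemma \ref{pre_apriori2}. Set
\[
T_* := \min\bigl\{T,\, \widehat T_+(K, P_N(\vp)),\, (2C_1)^{-1}(1+E_8(\vp))^{-r(8)}\bigr\},
\]
and define $\widetilde T := \sup\{\tau \in [0, T_*] : E_8(u_\e(t)) \le K \text{ for all } t\in [0,\tau]\}$, which is positive by continuity of $t\mapsto u_\e(t)$ in $H^8(\T)$ (Proposition \ref{para_exist}) together with the strict inequality $E_8(\vp) < K$.

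On $[0, \widetilde T]$ the $H^8$ bound $E_8(u_\e)\le K$ holds, so Lemma \ref{pre_apriori2}, combined with $\widetilde T \le \widehat T_+$, yields $P_N(u_\e(t)) \ge P_N(\vp)/2 > 0$; in particular $P_N(u_\e)\ge 0$, so Lemma \ref{pre_apriori} with $T_0 = \widetilde T$ applies on the whole of $[0,\widetilde T]$ (using $\widetilde T \le (2C_1)^{-1}(1+E_8(\vp))^{-r(8)}$) and returns both $E_s(u_\e(t)) + \int_0^t P_N(u_\e)\|\p_x^{s+1} u_\e\|_{L^2}^2\, dt' \lec E_s(\vp)$ and, from \eqref{eq32.1} with $s=8$, the refined bound $E_8(u_\e(t)) \le 2^{1/r(8)}(1+E_8(\vp)) = K/2$. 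This last inequality is what closes the bootstrap: if $\widetilde T < T_*$, continuity would force $E_8(u_\e(\widetilde T)) = K$, contradicting $E_8(u_\e(\widetilde T))\le K/2$. Hence $\widetilde T = T_*$, so that \eqref{apriori2} is already in hand on $[0,T_*]$, and \eqref{apriori1} follows by replacing $P_N(u_\e)$ inside the integral by its lower bound $P_N(\vp)/2$. The main subtlety is precisely designing the constants so that Lemma \ref{pre_apriori} recovers a strictly better $H^8$ bound ($K/2 < K$) than the one posited at the bootstrap step; that strict gain is what dissolves the circular dependence between the two lemmas.
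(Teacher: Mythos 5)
Your argument is correct and is, at its core, the same continuity/bootstrap argument the paper uses: $T_*$ is assembled from the same three ingredients ($T$, the time $(2C_1)^{-1}(1+E_8(\vp))^{-r(8)}$ from Lemma \ref{pre_apriori}, and the time from Lemma \ref{pre_apriori2} evaluated at an $H^8$ bound depending only on $E_8(\vp)$), and the two lemmas are interleaved to break the circular dependence. The one substantive difference is the choice of bootstrap quantity. The paper propagates the sign condition: it \emph{assumes} $P_N(u_\e(t))\ge 0$ on $[0,t_*]$, feeds that into Lemma \ref{pre_apriori} to get the $H^8$ bound, upgrades to $P_N(u_\e(t))\ge P_N(\vp)/2$ via Lemma \ref{pre_apriori2}, and closes using continuity of $t\mapsto P_N(u_\e(t))$. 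You instead propagate the energy bound $E_8(u_\e(t))\le K$ and close using the strict gain $K/2<K$ supplied by \eqref{eq32.1}; your constant bookkeeping here is accurate. Both routes work, but the paper's choice is marginally more robust on one technical point: continuity of $P_N(u_\e(t))$ is automatic for any $L^\infty([0,T];H^8(\T))$ solution, since the equation gives $\p_t u_\e\in L^\infty([0,T];H^3(\T))$, hence $u_\e\in C([0,T];H^3(\T))$, and $P_N$ involves only three derivatives; by contrast, the continuity of $t\mapsto E_8(u_\e(t))$ that your definition of $\widetilde T$ and your closing contradiction both rely on requires strong continuity in $H^8(\T)$, which the bare hypothesis of Corollary \ref{cor_energy_est} does not supply (it gives only weak $H^8$ continuity, hence lower semicontinuity of the norm). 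You correctly point to Proposition \ref{para_exist} for this, and indeed the proposition is only ever applied to such continuous solutions, so the gap is contextual rather than fatal; still, it is worth noticing that bootstrapping the sign of $P_N$ rather than the size of $E_8$ avoids the issue entirely.
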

\begin{proof}
The case $P_N \equiv 0$ immediately follows from Lemma \ref{pre_apriori}.
For the proof of the case $P_N(\vp)>0$, we use the continuity argument.
Obviously, it follows that
\EQ{\label{cont}
P_N(u(t)) \ge P_N(\vp)/2 >0
}
at $t=0$.
Let $T_*:=\min\{ T,(2C_1)^{-1}(1+E_8(\vp))^{-r(8)},T_+(C_2(E_8(\vp))E_8(\vp),P_N(\vp)) \}$ and $0<t_* \le T_*$.
We assume $P_N(u(t))\ge 0$ on $[0,t_*]$.
Then, by Lemma \ref{pre_apriori}, $E_8(u_\e(t))\le C_2(E_8(\vp))E_8(\vp)$ on $[0,t_*]$.
By Lemma \ref{pre_apriori2} with $K=C_2(E_8(\vp))E_8(\vp)$, we obtain \eqref{cont} on $[0,t_*]$.
Since $P_N(u(t))$ is continuous, we conclude that \eqref{cont} holds on $[0,T_*]$, that is \eqref{apriori2}.
By Lemma \ref{pre_apriori}, we obtain \eqref{apriori1} on $[0,T_*]$.
\end{proof}
When $P_N \equiv 0$ or $P_N(\vp) >0$ and $\vp\in H^{s}(\T)$ with $s \ge 12$,
we have a priori estimate of $E_{s}(u_\e)$ on $[0,T_*]$.
Therefore, by Proposition \ref{thm_energy_est} and Lemma \ref{sobolev_energy}, we can easily conclude that $\{u_\e\}$ is a Cauchy sequence in $C([0,T_*];H^{s-4}(\T))$ and the limit is a solution to \eqref{e1}--\eqref{e2}.
However, this argument is not enough for our purpose because the regularity of the solution obtained by this argument is weaker than that of initial data.
We introduce Bona-Smith's approximation argument (\cite{BonaS}) to obtain a  solution in $C([0,T_*]; H^s(\T))$.
\begin{defn}
For $\y \in (0,1], s\ge 0$, $f\in H^s(\T)$, we put
\EQQ{
\ha{J_{\y,s} f}(k):=\exp (-\y (1+|k|^2)^{s/2})  \ha{f}(k).
} 
\end{defn}
For the proof of the following lemma, see Lemma 6.14 in \cite{Iorio}.
\begin{lem}\label{lem_BS}
Let $0\le j\le s$, $0\le l$ and $f\in H^s(\T)$. Then, $J_{\y,s} f \in H^\infty(\T)$ satisfies
\EQQ{
&\|J_{\y,s} f -f\|_{H^s} \to 0 \ \ (\y\to 0),\\
&\|J_{\y,s} f -f \|_{H^{s-j}} \lec \y^{j/s} \|f\|_{H^s}, \ \ \|J_{\y,s}f\|_{H^{s-j}}\lec \|f\|_{H^{s-j}},\\
&\|J_{\y,s} f\|_{H^{s+l}} \lec \y^{-l/s}\|f\|_{H^s}.
}
\end{lem}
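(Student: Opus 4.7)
The plan is to work entirely on the Fourier side via Plancherel's identity, since $J_{\y,s}$ is a Fourier multiplier with symbol $m_\y(k):=\exp(-\y\LR{k}^s)$ that takes values in $(0,1]$. All four estimates then reduce to pointwise bounds on $m_\y(k)$ and $1-m_\y(k)$ weighted by appropriate powers of $\LR{k}$, so no hard analysis is required.

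First I would dispose of the trivial bound $\|J_{\y,s}f\|_{H^{s-j}}\lec\|f\|_{H^{s-j}}$ by using $0<m_\y(k)\le 1$ inside the Plancherel sum. Next, for the convergence $\|J_{\y,s}f-f\|_{H^s}\to 0$, the summand $\LR{k}^{2s}(1-m_\y(k))^2|\ha f(k)|^2$ is dominated by the summable sequence $\LR{k}^{2s}|\ha f(k)|^2$ and tends to $0$ pointwise as $\y\to 0$, so dominated convergence on $\ell^2_k$ yields the claim.

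The two quantitative estimates both rest on the elementary inequality $1-e^{-x}\le x^\al$, valid for $x\ge 0$ and $\al\in[0,1]$, which I would verify by splitting into $x\le 1$ and $x\ge 1$ and using $1-e^{-x}\le\min(1,x)\le x^\al$ in each case. Taking $\al=j/s$ and $x=\y\LR{k}^s$ gives $1-m_\y(k)\le\y^{j/s}\LR{k}^j$, so that
\[
\|J_{\y,s}f-f\|_{H^{s-j}}^2=\sum_k\LR{k}^{2(s-j)}(1-m_\y(k))^2|\ha f(k)|^2\le \y^{2j/s}\|f\|_{H^s}^2,
\]
which is the asserted quantitative rate.

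For the smoothing estimate $\|J_{\y,s}f\|_{H^{s+l}}\lec\y^{-l/s}\|f\|_{H^s}$, the plan is to substitute $y:=\y^{1/s}\LR{k}$ so that $\LR{k}^{2l}m_\y(k)^2=\y^{-2l/s}\,y^{2l}e^{-2y^s}$. Since $y\mapsto y^{2l}e^{-2y^s}$ is bounded on $[0,\infty)$, Plancherel gives
\[
\|J_{\y,s}f\|_{H^{s+l}}^2=\sum_k\LR{k}^{2l}m_\y(k)^2\cdot\LR{k}^{2s}|\ha f(k)|^2\lec \y^{-2l/s}\|f\|_{H^s}^2.
\]
I do not expect any real obstacle here: the whole lemma is a routine consequence of the pointwise behavior of the heat-type symbol $e^{-\y\LR{k}^s}$, and the mildest subtlety is the scalar inequality $1-e^{-x}\le x^\al$, which is standard.
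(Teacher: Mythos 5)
Your proof is correct. The paper does not actually prove this lemma --- it simply cites Lemma 6.14 of Iorio--Iorio --- and your argument is the standard Fourier-multiplier proof that reference contains: the uniform bound $0<e^{-\y\LR{k}^s}\le 1$ gives the boundedness and (via dominated convergence on $\ell^2_k$) the convergence statement, the scalar inequality $1-e^{-x}\le\min(1,x)\le x^{j/s}$ gives the rate $\y^{j/s}$, and the boundedness of $y\mapsto y^{2l}e^{-2y^s}$ gives the smoothing bound, which in particular shows $J_{\y,s}f\in H^\infty(\T)$. All four steps check out, so your proposal is a complete, self-contained substitute for the citation.
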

Next, we show the existence results in Theorems \ref{thm_nonparabolic}, \ref{thm_parabolic1}.
\begin{proof}[Proof of the existence results]
Fix $s \ge 13$. Let $\y=\e \in (0,1]$ and $\vp_\y:=J_{\y,s}\vp \in H^\infty(\T)$.
By Proposition \ref{para_exist}, we have the solution $u_{\e,\y}\in C([0,T_\e); H^\infty(\T))$ to \eqref{es1} with initial data $\vp_\y$.
Since $P_N \equiv 0$ or $P_N(\vp_\y) \to P_N(\vp)> 0$ as $\y\to 0$, by Proposition \ref{prop_apriori}, Lemma \ref{sobolev_energy} and Lemma \ref{lem_BS}, there exists $T_{*,\y}=T_{*,\y}(E_8(\vp_\y),P_N(\vp_\y))\in (0,T_\e]$ such that
\EQS{
\begin{split}
\sup_{t\in [0,T_{*,\y}]} E_{12}(u_{\e,\y}(t)) &\lec E_{12}(\vp_\y) \lec \| \vp_{\y}\|^2_{H^{12}}+(1+\|\vp_{\y}\|_{H^4})^{12p_{\max}}\\
& \lec \|\vp\|^2_{H^{12}}+(1+\|\vp\|_{H^4})^{12p_{\max}},\label{e4.21}
\end{split}\\
\begin{split}
\sup_{t\in [0,T_{*,\y}]} E_{s}(u_{\e,\y}(t)) &\lec E_{s}(\vp_\y) \lec \| \vp_{\y}\|^2_{H^{s}}+(1+\|\vp_{\y}\|_{H^4})^{sp_{\max}}\\
& \lec \|\vp\|^2_{H^{s}}+(1+\|\vp\|_{H^4})^{sp_{\max}},\label{e4.212}
\end{split}\\
\begin{split}
\sup_{t\in [0,T_{*,\y}]} E_{s+4}(u_{\e,\y}(t)) &\lec E_{s+4}(\vp_\y) \lec \| \vp_{\y}\|^2_{H^{s+4}}+(1+\|\vp_\y\|_{H^4})^{(s+4)p_{\max}}\\
& \lec \y^{-8/s}\|\vp\|^2_{H^s}+  (1+\|\vp\|_{H^4})^{(s+4)p_{\max}},\label{e4.22}
\end{split}\\
\inf_{t \in [0,T_{*,\y}]}P_N(u_{\e,\y}(t)) & \ge  P_N(\vp_\y)/2 \gec P_N(\vp),\label{e4.23}
}
for sufficiently small $\y>0$.
Since $P_N(\vp_\y) \gec P_N(\vp)$ and $\|\vp_\y\|_{H^8} \lec \|\vp\|_{H^8}$, there exists $T_*=T_*(\|\vp\|_{H^{8}}, P_N(\vp)) \in (0,T_{*,\y}]$ for sufficiently small $\y>0$.
Thus, by Proposition \ref{thm_energy_est} with $s=8$ and \eqref{monotone}, we have
\EQQ{
\frac{d}{dt}F_8(u_{\e_1,\y_1}(t),u_{\e_2,\y_2}(t))
\le  C(\|\vp\|_{H^{12}})(F_8(u_{\e_1,\y_1}(t),u_{\e_2,\y_2}(t))+|\e_1-\e_2|^2),
}
on $[0,T_*]$.
Here we assumed $0<\e_1=\y_1\le \e_2 =\y_2\le 1$.
By Gronwall's inequality, there exists $T=T(\|\vp\|_{H^{12}}, P_N(\vp))\in (0,T_*]$ such that
\EQ{\label{F8}
&\sup_{t\in [0,T]}F_8(u_{\e_1,\y_1}(t),u_{\e_2,\y_2}(t))\\
\lec& F_8(\vp_{\y_1},\vp_{\y_2})+|\e_1-\e_2|^2\\
\lec& \|\vp_{\y_1}-\vp_{\y_2}\|^2_{H^8}+\|\vp_{\y_1}-\vp_{\y_2}\|_{L^2}^2(1+\|\vp_{\y_1}\|_{H^4})^{8(p_{\max}-1)}+\y_2^2\\
\lec& \y_2^{2(s-8)/s}(1+\|\vp\|_{H^s})^{8p_{\max}}.
}
Here, we used Lemma \ref{sobolev_energy} and Lemma \ref{lem_BS}.
By Proposition \ref{thm_energy_est}, \eqref{monotone}, \eqref{e4.21}, \eqref{e4.212}, \eqref{e4.22} and \eqref{F8},
\EQQ{
&\frac{d}{dt} F_{s}(u_{\e_1,\y_1}(t),u_{\e_2,\y_2}(t))
+P_N(u_{\e_1,\y_1}(t))\|\p_x^{s+1} (u_{\e_1,\y_1}(t)-u_{\e_2,\y_2}(t))\|_{L^2}^2\\
\lec&  F_s(u_{\e_1,\y_1}(t),u_{\e_2,\y_2}(t))(1+\|\vp\|^2_{H^{12}}+(1+\|\vp\|_{H^4})^{12p_{\max}})^{r(s)}\\
+&\big\{(\y_2^{2(s-8)/s}(1+\|\vp\|_{H^s})^{8p_{\max}}(1+\|\vp\|^2_{H^{12}}+(1+\|\vp\|_{H^4})^{12p_{\max}})^{p_{\max}-2}+|\e_1-\e_2|^2\big\}\\
&\times (\y_2^{-8/s}\|\vp\|^2_{H^s}+\|\vp\|_{H^4}^{(s+4)p_{\max}})\\
\lec& C(s,\|\vp\|_{H^s})\big(F_s(u_{\e_1,\y_1}(t),u_{\e_2,\y_2}(t))+\y_2^{2(s-12)/s}\big).
}
By Gronwall's inequality, Lemma \ref{sobolev_energy} and \eqref{e4.23}, we conclude
\EQ{\label{FS}
&\sup_{t\in [0,T]}\|u_{\e_1,\y_1}(t)-u_{\e_2,\y_2}(t))\|^2_{H^s}
+P_N(\vp)\int_0^T\|\p_x^{s+1} (u_{\e_1,\y_1}(t)-u_{\e_2,\y_2}(t))\|_{L^2}^2\, dt\\
\lec &\sup_{t\in [0,T]}F_{s}(u_{\e_1,\y_1}(t),u_{\e_2,\y_2}(t))
+\int_0^T P_N(u_{\e_1,\y_1}(t))\|\p_x^{s+1} (u_{\e_1,\y_1}(t)-u_{\e_2,\y_2}(t))\|_{L^2}^2\, dt\\
\lec &F_{s}(\vp_{\y_1},\vp_{\y_2})+\y_2^{2(s-12)/s}\\
\lec &\|\vp_{\y_1}-\vp_{\y_2}\|_{H^s}^2+\|\vp_{\y_1}-\vp_{\y_2}\|_{L^2}^2(1+\|\vp_{\y_1}\|_{H^4})^{s(p_{\max}-1)}+\y_2^{2(s-12)/s}\to 0
}
as $0<\y_1\le \y_2 \to 0$.
Thus, $\{u_{\e,\y}\}_{\e=\y}$ is a Cauchy sequence in $C([0,T];H^s(\T))$.
By using \eqref{es1}, we also have that $\{\p_t u_{\e,\y}\}_{\e=\y}$ is a Cauchy sequence in $C([0,T]; H^{s-5}(\T))$.
Therefore, there exists the limit $u\in C([0,T];H^s(\T))\cap C^1([0,T];H^{s-5}(\T))$ as $\e=\y\to 0$, which satisfies \eqref{e1}--\eqref{e2}.
Finally, we assume $P_N(\vp)>0$ and prove $u\in C^\infty((0,T]\times \T)$. 
We have $u\in L^2([0,T];H^{s+1}(\T))$ by the second term of the left-hand side of \eqref{FS}.
Let $0<\de<T$. Since $u(t) \in H^{s+1}(\T)\, a.e. \, t\in [0,T]$, we can choose $t_0$ such that $0<t_0<\de/2$ and $u(t_0)\in H^{s+1}(\T)$.
Applying the same argument above with initial data $\vp:=u(t_0)\in H^{s+1}(\T)$,
we conclude $u\in C([t_0,T];H^{s+1}(\T)) \cap  L^2([t_0,T];H^{s+2}(\T))$.
We can choose $t_1$ such that $\de/2<t_1<\de(1/2+1/4)$ and $u(t_1) \in H^{s+2}(\T)$.
Applying the same argument above with initial data $\vp:=u(t_1)\in H^{s+2}(\T)$,
we conclude $u\in C([t_1,T];H^{s+2}(\T)) \cap  L^2([t_1,T];H^{s+3}(\T))$.
By repeating this process, we conclude $u \in C([\de,T];H^\infty(\T))$. By using \eqref{e1} and the Sobolev inequality, we also have $u \in C^\infty([\de,T]\times \T)$.
Because we can take arbitrarily small $\de>0$, we conclude $u \in C^\infty((0,T]\times \T)$.
\end{proof}
Next, we show the continuous dependence results in Theorems \ref{thm_nonparabolic}, \ref{thm_parabolic1}.
\begin{proof}[Proof of the continuous dependence results]
Taking $0< \e_1= \y_1 \to 0$ in \eqref{FS}, we have
\EQ{\label{FSlimit}
\sup_{t\in [0,T]}\|u(t)-u_{\e,\y}(t))\|^2_{H^s}
\lec C(s,\|\vp\|_{H^s}) (\|\vp-\vp_{\y}\|_{H^s}^2+\y^{2(s-12)/s})
}
for any sufficiently small $\e = \y>0$.
We show that for any $\de_0 >0$, there exists $N_0\in \N$ such that
if $j\ge N_0$, then $\sup_{t\in [0,T]}\|u(t)-u^j(t)\|_{H^s} \lec \de_0$.
There exists $N_1\in\N$ such that for any $j\ge N_1$,
$\|J_{\y,s}(\vp^j-\vp)\|_{H^s}\le \|\vp^j-\vp \|_{H^s}\le \de_0$.
Thus, by Lemma \ref{lem_BS},
\EQQ{
\|\vp^j-\vp^j_{\y}\|_{H^s}\le \|\vp^j-\vp \|_{H^s}+\|\vp -\vp_{\y}\|_{H^s}+\|\vp_{\y}-\vp^j_{\y}\|_{H^s}
\lec \de_0
}
for any $j\ge N_1$ if we take $\y>0$ sufficiently small.
Thus, by \eqref{FSlimit}
\EQ{\label{e4.1}
\sup_{t\in [0,T]}\|u(t)-u_{\e,\y}(t))\|_{H^s}+\sup_{t\in [0,T]}\|u^j(t)-u^j_{\e,\y}(t))\|_{H^s}
\lec \de_0
}
for any $j\ge N_1$ and sufficiently small $\y$.
Here we fix sufficiently small $\y>0$ such that \eqref{e4.1} holds. By Proposition \ref{para_exist}, there exists $N_2\in\N$ such that for any $j\ge N_2$, 
$\|u_{\e,\y}(t)-u^j_{\e,\y}(t)\|_{H^s}\le \de_0$.
Therefore, for any $j\ge N_0:=\max\{N_1,N_2\}$, we obtain
\EQQ{
&\sup_{t\in [0,T]} \|u(t)-u^j(t)\|_{H^s}\\
\le& \sup_{t\in [0,T]}\big(\|u(t)-u_{\e,\y}(t)\|_{H^s} +\|u_{\e,\y}(t)-u^j_{\e,\y}(t)\|_{H^s}+\|u^j_{\e,\y}(t)-u^j(t)\|_{H^s}\big)\lec \de_0.
}
\end{proof}
Next, we show Theorem \ref{thm_parabolic2}.
\begin{proof}[Proof of Theorem \ref{thm_parabolic2}]
We prove it by contradiction. Assume that there exists a solution $u\in C([0,T];H^{13}(\T))$ to \eqref{e1}--\eqref{e2}.
We take $\de>0$ sufficiently small. Then,
we have $P_N(u(t)) \le P_N(\vp)/2<0$ on $[0,\de]$ and the lifetime $T=T(P_N(u(\de)),\|u(\de)\|_{H^{12}})$ in Theorem \ref{thm_parabolic1} satisfies $T \ge \de$.
Applying Theorem \ref{thm_parabolic1} with initial data $u(\de)$, we have $u \in C^\infty ( [0,\de)\times \T )$ since $P_N(u(\de))<0$.
This contradicts the assumption $\vp \not\in C^\infty(\T)$.
\end{proof} 
Finally we give the outline of the proof of Corollary \ref{thm_higher_order}.
\begin{proof}[Outline of the proof of Corollary \ref{thm_higher_order}]
We consider the following parabolic regularized equation:
\EQS{
\begin{split}
&(\p_t +\e\p_x^4+\ga_0\p_x^{2j+1}+\ga_1 \p_x^{2j-1}+\cdots+ \ga_j \p_x) u_\e(t,x)\\
&\hspace*{9em}= N(\p_x^3 u_\e, \p_x^2 u_\e, \p_xu_\e, u_\e), \quad (t,x)\in [0,T_\e) \times \T,
\end{split}\label{es_high}
}
instead of \eqref{es1}. We can prove Proposition \ref{para_exist} replaced \eqref{es1} with \eqref{es_high} in the same manner. 
We replace the definition of $\phi$ with $\phi(k):=i((-1)^j \ga_0 k^{2j+1}+(-1)^{j-1}\ga_1 k^{2j-1}+\cdots+\ga_j k)$.
Then, Lemma \ref{oscillation_est} holds for this new definition of $\phi$.
Precisely, in the same manner, we have much better estimate 
\EQ{\label{ineq_oscillation_higher}
|\Phi^{(p)}(\vec{k}^{(p)})| \gec |k_p|^{2j}|k_{(1,p-1)}|\sim|k_{p+1}|^{2j}|k_{(1,p-1)}|.
}
However, \eqref{ineq_oscillation} is enough for our proof.
Though $\Phi^{(p)}, E_s, F_s$ depend on the definition of $\phi$, 
all propositions and lemmas in Section 2 and Section 3 with this new definition of $\phi$ hold if we replace \eqref{es1} with \eqref{es_high}.
Therefore, The proofs in Section 4 are valid if we replace \eqref{e1} and \eqref{es1} with \eqref{he1} and \eqref{es_high}. Thus, we obtain Corollary \ref{thm_higher_order}.
\end{proof}


\begin{thebibliography}{99}
\bibitem{Babin}A. Babin, A. Ilyin and E. Titi, {\itshape On the regularization mechanism for the periodic Korteweg-de Vries equation}, Comm. Pure Appl. Math. {\bf 64} (2011), no. 5, 591--648. 
\bibitem{Benney}D. J. Benney, {\itshape A general theory for interactions between short and long waves}, Studies in Appl. Math. {\bf 56} (1976/77), no. 1, 81--94.
\bibitem{BonaS}J. L. Bona and R. Smith, {\itshape The initial-value problem for the Korteweg-de Vries equation}, Philos. Trans. Roy. Soc. London Ser. A {\bfseries 278} (1975), no. 1287, 555--601.
\bibitem{Bo}J. Bourgain, {\itshape Fourier transform restriction phenomena for certain lattice
  subsets and applications to nonlinear evolution equations. II. The KdV-equation},
  Geom. Funct. Anal. {\bfseries 3} (1993), no. 3, 209--262.
\bibitem{Co2}J. Colliander, M. Keel, G. Staffilani, H. Takaoka and T. Tao,
  {\itshape Multi-linear estimates for periodic KdV equations, and applications},
  J. Funct. Anal. {\bfseries 211} (2004), no. 1, 173--218.
\bibitem{GMS}P. Germain, N. Masmoudi and J. Shatah, {\itshape Global solutions for the gravity water waves equation in dimension 3}, Ann. of Math. {\bf 175} (2012), no. 2, 691--754.
\bibitem{Gru}A. Gr$\ddot{\mathrm{u}}$nrock, {\itshape On the hierarchies of higher order mKdV and KdV equations}, Cent. Eur. J. Math. {\bf 8} (2010), no. 3, 500--536.
\bibitem{Zihua} Z. Guo, C. Kwak and S. Kwon, {\itshape Rough solutions of the fifth-order KdV equations}, J. Funct. Anal. {\bf 265} (2013), no. 11, 2791--2829.
\bibitem{Zihua2} Z. Guo, S. Kwon and T. Oh, {\itshape Poincar\'e-Dulac normal form reduction for unconditional well-posedness of the periodic cubic NLS}, Comm. Math. Phys. {\bf 322} (2013), no. 1, 19--48.
\bibitem{Hayashi}N. Hayashi, {\itshape The initial value problem for the derivative nonlinear Schr\"odinger equation in the energy space}, Nonlinear Anal. {\bf 20} (1993), no. 7, 823--833.
\bibitem{HayashiOzawa1}N. Hayashi and T. Ozawa, {\itshape On the derivative nonlinear Schr\"odinger equation}, Phys. D {\bf 55} (1992), no. 1-2, 14--36.
\bibitem{HayashiOzawa2}N. Hayashi and T. Ozawa, {\itshape Remarks on nonlinear Schr\"odinger equations in one space dimension}, Differential Integral Equations {\bf 7} (1994), no. 2, 453--461.
\bibitem{Iorio} R. Iorio and V. M. Iorio, ``Fourier Analysis and Partial Differential Equations'', 
Cambridge Stud. Adv. Math., 70. Cambridge University Press, 2001. 
\bibitem{Kwon1}S. Kwon, {\itshape Well-posedness and ill-posedness of the fifth-order modified KdV equation}, Electron. J. Differential Equations 2008, no. 01, 15 pp.
\bibitem{Kwon2}S. Kwon, {\itshape On the fifth-order KdV equation: local well-posedness and lack of uniform continuity of the solution map}, J. Differential Equations {\bf 245} (2008), no. 9, 2627--2659.
\bibitem{TKK}T. K. Kato, {\itshape Well-posedness for the fifth order KdV equation}, Fankcialaj Ekvacioj {\bf 55} (2012), 17--53.
\bibitem{KPilod}C. E. Kenig and D. Pilod, {\itshape Well-posedness for the fifth-order KdV equation in the energy space},  Trans. Amer. Math. Soc., {\bf 367} (2015), 2551--2612.
\bibitem{KPilod2}C. E. Kenig and D. Pilod, {\itshape Local well-posedness for the KdV hierarchy at high regularity}, Adv. Differential Equations {\bf 21} (2016), no. 9-10, 801--836. 
\bibitem{KPV}C.E. Kenig, G. Ponce and L. Vega, {\itshape Higher-order nonlinear dispersive equations}, Proc. Amer. Math. Soc. {\bf 122} (1994), no. 1, 157--166.
\bibitem{Kwak} C. Kwak, {\itshape Local well-posedness for the fifth-order KdV equations on $\T$}, J. Differential Equations {\bf 260} (2016), no. 10, 7683--7737.
\bibitem{Lisher}E. Lisher, {\itshape Comments on the use of the Korteweg-de Vries equation in the study of anharmonic lattices}, Proc. R. Soc. Lond. Ser. A {\bf 339} (1974) 119--126.
\bibitem{Ponce}G. Ponce, {\itshape
Lax pairs and higher order models for water waves}, 
J. Differential Equations {\bf 102} (1993), no. 2, 360--381.
\bibitem{Sa}J. C. Saut, {\itshape Quelques g\'en\'eralisations de l'\'equation de Korteweg-de Vries. II.} J. Differential Equations {\bf 33} (1979), no. 3, 320--335.
\bibitem{Schwarz}M. Schwarz Jr., {\itshape The initial value problem for the sequence of generalized Korteweg-de Vries equations}, Adv. Math. {\bf 54} (1984), no. 1, 22--56.
\bibitem{Segata}J. Segata, {\itshape
Refined energy inequality with application to well-posedness for the fourth order nonlinear Schr\"odinger type equation on torus},
J. Differential Equations {\bf 252} (2012), no. 11, 5994--6011.
\bibitem{Shatah}J. Shatah, {\itshape Normal forms and quadratic nonlinear Klein-Gordon equations}, Comm. Pure Appl. Math. {\bf 38} (1985), no. 5, 685--696.
\bibitem{Tomoeda}K. Tomoeda, {\itshape Analyticity and smoothing effect for the fifth order KdV type equation}, Proc. Japan Acad. Ser. A Math. Sci. {\bf 86} (2010), no. 7, 101--106. 
  \end{thebibliography}
\end{document}